\newcommand{\old}[1]{}
\renewcommand{\emph}[1]{\textit{#1}}
\definecolor{brown}{cmyk}{0, 0.72, 1, 0.45}
\definecolor{grey}{gray}{0.5}
\newcounter{rot}
\newcommand{\empt}{\varnothing}
\def\wLB{\widehat{\lb}}
\newcommand{\ignore}[1]{}
\def\cX{{\mathcal X}}
\def\cV{{\mathcal V}}
\def\cY{{\mathcal Y}}
\def\cI{{\mathcal I}}
\newcommand{\set}[1]{\left\{#1\right\}}
\def\ii_(#1,#2){i_{#1}^{#2}}
\renewcommand{\S}[1]{S^{(#1)}}
\newcommand{\bS}[1]{{\bar S}^{(#1)}}
\def\a{\alpha}
\def\b{\beta}
\def\d{\delta}
\def\D{\Delta}
\def\e{\varepsilon}
\def\g{\gamma}
\def\th{\theta}
\def\n{\nu}
\def\p{\pi}
\def\r{\rho}
\def\t{\tau}
\def\Om{\Omega}
\def\x{\xi}
\newcommand{\sbs}{\subseteq}
\newcommand{\sbsn}{\subsetneq}
\newcommand{\stm}{\setminus}
\def\cN{{\mathcal N}}
\def\cS{{\mathcal S}}
\def\Re{\mathbb{R}}
\newcommand{\brac}[1]{\left( #1 \right)}
\def\hx{\hat{x}}
\newcommand{\expect}{\operatorname{\bf E}}
\def\E{\expect}
\renewcommand{\Pr}{\operatorname{\bf Pr}}
\newcommand\bfrac[2]{\left(\frac{#1}{#2}\right)}
\newtheorem{theorem}{Theorem}[section]
\newenvironment{theoremR}[1]
{\def\repthmref{#1}\theoremRinner}{\endtheoremRinner}
\newtheorem{conjecture}[theorem]{Conjecture}
\newtheorem{lemma}[theorem]{Lemma}
\newtheorem{proposition}[theorem]{Proposition}
\theoremstyle{definition}
\newtheorem{remark}[theorem]{Remark}
\newtheorem{q}{}
\newtheorem{observation}[theorem]{Observation}
\newcounter{thmtemp}
\newcommand{\nospace}[1]{}
\def\path{\operatorname{PATH}}
\def\V{{\bf Var}}
\newcommand{\beq}[1]{\begin{equation}\label{#1}}
\def\eeq{\end{equation}}
\renewcommand{\Re}{\mathbb{R}}
\def\cL{{\mathcal L}}
\def\La{\Lambda}
\newcommand{\de}[2]{||#1-#2||}
\newcommand{\diam}{\mathrm{diam}}
\newcommand{\dist}{\mathrm{dist}}
\newcommand\xdn{\cX_n}
\newcommand\ydn{\cY_n}
\newcommand{\flr}[1]{\lfloor #1 \rfloor}
\begin{document}

\newcommand{\tf}{{\mathrm{TF}}}
\newcommand{\hk}{{\mathrm{HK}}}
\newcommand{\hf}{{H\mathrm{F}}}
\newcommand{\etf}{{\mathrm{E}2\mathrm{F}}}
\newcommand{\mst}{\mathrm{MST}}
\newcommand{\tsp}{\mathrm{TSP}}
\newcommand{\lb}{\mathrm{LB}}
\newcommand{\mm}{\mathrm{MM}}
\newcommand{\tmm}{\mathrm{2MM}}
\newcommand{\bmst}{\b_\mst}
\newcommand{\bmsT}[1]{\b_{\mst_{#1}}}
\newcommand{\bmstk}{\b_{\mst_k}}
\newcommand{\btsp}{\b_\tsp}
\newcommand{\bmm}{\b_\mm}
\newcommand{\btf}{\b_\tf}
\newcommand{\blb}{\b_\lb}
\newcommand{\bhk}{\b_\hk}
\newcommand{\bhf}{\b_\hf}
\newcommand{\betf}{\b_\etf}
\newcommand{\btfg}{\b_{\tf_g}}
\newcommand{\btF}[1]{\b_{\tf_{#1}}}
\newcommand{\betfg}{\b_{\etf_g}}
\newcommand{\btfG}[1]{\b_{\tf_{#1}}}
\newcommand{\betfG}[1]{\b_{\etf_{#1}}}

\title{Separating subadditive Euclidean functionals}

\author{Alan Frieze}
\address{Department of Mathematical Sciences\\
Carnegie Mellon University\\
Pittsburgh, PA 15213\\
U.S.A.}
\email[Alan Frieze]{alan@random.math.cmu.edu}
\thanks{Research supported in part by NSF grant DMS-1362785.}
\author{Wesley Pegden}
\email[Wesley Pegden]{wes@math.cmu.edu}
\thanks{Research supported in part by NSF grant DMS-1363136.}

\date{\today}

\begin{abstract}
If we are given $n$ random points in the hypercube $[0,1]^d$, then the minimum length of a Traveling Salesperson Tour through the points, the minimum length of a spanning tree, and the minimum length of a matching, etc., are known to be asymptotically $\beta n^{\frac{d-1}{d}}$ a.s., where $\beta$ is an absolute constant in each case.  We prove separation results for these constants.  In particular, concerning the constants $\btsp^d$, $\bmst^d$, $\bmm^d$, and $\btf^d$ from the asymptotic formulas for the minimum length TSP, spanning tree, matching, and 2-factor, respectively,  we prove that $\bmst^d<\btsp^d$, $2\bmm^d<\btsp^d$, and $\btf^d<\btsp^d$ for all $d\geq 2$.  We also asymptotically separate the TSP from its linear programming relaxation in this setting. Our results have some computational relevance, showing that a certain natural class of simple algorithms cannot solve the random Euclidean TSP efficiently.
\end{abstract}

\maketitle
\section{Introduction}
Beardwood, Halton, and Hammersley \cite{BHH} studied the length of a Traveling Salesperson Tour through random points in Euclidean space.  In particular, if $x_1,x_2,\dots$ is a random sequence of points in $[0,1]^d$ and $\xdn=\{x_1,\dots,x_n\}$, their results imply that there is an absolute constant $\btsp^d$ such that the length $\tsp(\xdn)$ of a minimum length tour through $\xdn$ satisfies
\begin{equation}
\label{e.bhh}
\tsp(\xdn)\sim \btsp^d n^{\frac{d-1}{d}}\qquad a.s.
\end{equation}

This result has many extensions; for example, we know that identical asymptotic formulas hold for the the cases of the minimum length of a spanning tree $\mst(\xdn)$\cite{BHH}, and the minimum length of a matching $\mm(\xdn)$ \cite{Papa}.  Steele \cite{S} provided a general framework which enables fast assertion of identical asymptotic formulas for these and other suitable problems.  For example, we will see in Section \ref{s.EF} that his results imply that the length $\tf(\xdn)$ of a minimum length 2-factor admits the same asymptotic characterization.

A major remaining problem in this area is to obtain analytic results regarding the constants $\b$ in such formulas.  In particular, the best rigorous bounds on such constants are generally very weak, with known results for $d=2$ given in Table \ref{t.betas}.  In particular, the bounds on $\btsp^2$ were not improved since 1959, until the paper \cite{Steiner} of Steinerberger improved the lower bound by $\tfrac{19}{5184}\approx .0036\dots$ and the upper bound by $\approx 10^{-6}$ .  On the other hand, there was some success as $d$ grows large, as Bertsimas and Van Ryzen \cite{ind} showed that, asymptotically in $d$, 
\begin{equation}
\bmst^d\sim 2\bmm^d \sim \sqrt{\frac{d}{2\pi e}},
\end{equation}
and conjectured that $\btsp^d\sim \sqrt{\frac{d}{2\pi e}}$ as well.
\begin{table}
\renewcommand{\arraystretch}{1.2} 
\begin{tabular}{c|ll}
 & lower & upper\\
\hline
\hline
$\btsp^2$ & .62866 \cite{Steiner}& .92116\dots \cite{BHH,Steiner}\\
\hline
$\bmst^2$ & .60082 \cite{MSTseries} & $\frac{1}{\sqrt 2}\approx.707$  \cite{Gil} \\
\hline
$2\bmm^2$ & $.5$ \cite{ind} & $.92116$ 
\end{tabular}
\vspace{1ex}
\caption{\label{t.betas} Bounds on constants for $d=2$.}
\end{table}

\bigskip
It seems that it has been overlooked that local geometric arguments are sufficient to prove the separation of constants for many natural examples of Euclidean functionals.   In particular, in the present paper, we will show that $\bmst^d<\btsp^d$, $\btf^d<\btsp^d$,  and $2\bmm^d<\btsp^d$ for all $d$.  These are the first asymptotic separations for Euclidean functionals where the Euclidean metric is playing an essential role: the only previous separation was shown (by Bern \cite{Bern}; see also \cite{HY}) for the minimum length rectilinear Steiner tree vs. the minimum rectilinear length spanning tree, which is equivalent to asymptotically distinguishing Steiner trees from trees in the $L_1$ norm.   (The rectilinear Steiner tree is also the only case where the asymptotic \emph{worst-case} length is known exactly \cite{CG}.)  Finally, we will also asymptotically separate the TSP from its linear programming relaxation.  

We begin by considering the degrees of vertices in the minimum spanning trees among $n$ random points.   Steele, Shepp, and Eddy \cite{leaves} showed that the number $\Lambda_k(\xdn)$ of vertices of degree $k$ satisfies
\[
\Lambda_k(\xdn)\sim \alpha_{k,d} n
\]
for constants $\alpha_{k,d}$, and proved that $\alpha(1,d)>0$.  Note that we must have $\alpha_{k,d}=0$ when $k>\tau(d)$, where $\tau(d)$ is the kissing number of $d$ dimensional space (6 in the case $d=2$).  Indeed, we must have $\a_{k,d}=0$ whenever $k>\tau'(d)$, where $\tau'(d)$ denotes a \emph{strict kissing number} of $d$, which we define as the maximum $K$ such that there exists $\e>0$ such that there is, in $d$ dimensions, a configuration of $K$ disjoint spheres of radius $1+\e$ each tangent to a common unit sphere.  (Note that $\tau'(d)\leq \tau(d)$, and in particular, $\tau'(2)=5$.)  We prove:
\begin{theorem}\label{t.posdeg}
$\alpha(k,d)>0$ if $k\leq \tau'(d)$.
\end{theorem}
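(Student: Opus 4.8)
The plan is to show that for any $k\leq\tau'(d)$, a positive density of points of $\cX_n$ are, with positive probability, the center of a local configuration which \emph{forces} them to have degree exactly $k$ in \emph{every} minimum spanning tree. The key is the definition of the strict kissing number: fix $\e>0$ and a configuration of $k$ disjoint spheres of radius $1+\e$, each tangent to a common unit sphere centered at the origin; let $p_1,\dots,p_k$ be the centers of these spheres, so $\|p_i\|=2+\e$ and $\|p_i-p_j\|\geq 2+2\e$ for $i\neq j$. Scale this whole picture by a small factor $\rho$ (with $\rho$ chosen later, depending only on $d$), so the relevant points are $\rho p_1,\dots,\rho p_k$ near the origin. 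I would call a point $x_m\in\cX_n$ \emph{good} if, after translating so that $x_m$ is at the origin, there is exactly one other point of $\cX_n$ in each of the $k$ tiny balls $B(\rho p_i,\rho\e/10)$, and \emph{no} other point of $\cX_n$ within distance $(2+\e/2)\rho$ of any of these $k+1$ points (including $x_m$ itself). A standard second-moment / Poissonization argument (of exactly the type underlying the results of Steele, Shepp and Eddy \cite{leaves} cited above) shows that the number of good points is $\Theta(n)$ a.s.: the probability that a given point is good is a positive constant depending only on $d$, and the events for well-separated points are nearly independent.

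Next I would argue that in \emph{any} minimum spanning tree $T$ of $\cX_n$, a good point $x_m$ has degree exactly $k$. For the lower bound $\deg_T(x_m)\geq k$: each of the $k$ neighbors $y_i$ (the point near $\rho p_i$) has its nearest neighbor in all of $\cX_n$ equal to $x_m$, because every other point of $\cX_n$ is at distance $>(2+\e/2)\rho$ from $y_i$ while $\|y_i-x_m\|\approx(2+\e)\rho\cdot\tfrac{1}{2+\e}\cdot$ — more precisely $\|y_i-x_m\|\leq(1+\e/5)\rho$ after the scaling — wait, I should be careful: with the unit sphere of radius $1$ and $\|p_i\|=2+\e$, I instead take $x_m$ at the origin and each $y_i$ at distance $\approx\rho$ while the pairwise distances $\|y_i-y_j\|$ exceed the distance from each $y_i$ to $x_m$; this is exactly the content of the spheres being disjoint. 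Since every edge of $T$ incident to $y_i$ other than $\{x_m,y_i\}$ would be longer than $\{x_m,y_i\}$ and longer than any edge in the rest of the graph near $y_i$, a standard exchange argument (delete a longer incident edge, add $\{x_m,y_i\}$, and check the result is still a spanning tree of no greater weight) forces $\{x_m,y_i\}\in T$ for each $i$, giving $\deg_T(x_m)\geq k$. For the upper bound $\deg_T(x_m)\leq k$: if $x_m$ had a $(k{+}1)$-st neighbor $z$ in $T$, then $z$ lies outside all the balls, so $\|x_m-z\|>(2+\e/2)\rho$; but then the $k+1$ points $z,y_1,\dots,y_k$ together with $x_m$ would realize $k+1$ disjoint spheres of radius $1+\e'$ (for some $\e'>0$ depending on $\e$ and the slack in the definition of good) tangent to a common sphere, after rescaling — contradicting $k+1>\tau'(d)$ unless $k<\tau'(d)$; and when $k=\tau'(d)$ the same contradiction applies by the strictness in the definition of $\tau'$. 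Then $\deg_T(x_m)=k$ exactly.

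Finally, since distinct good points that are far apart contribute disjoint forced structures, the count of degree-$k$ vertices in $T$ is at least the number of good points minus $O(1)$ corrections, so $\Lambda_k(\cX_n)=\Omega(n)$ a.s., and comparing with the formula $\Lambda_k(\cX_n)\sim\alpha(k,d)n$ of \cite{leaves} gives $\alpha(k,d)>0$. The main obstacle is getting the geometry of the two exchange arguments airtight: I need the slack parameter $\e$ and the scale $\rho$ chosen so that (i) the forced edges $\{x_m,y_i\}$ are \emph{strictly} shorter than any competing edge, so the MST exchange is valid even when the MST is not unique, and (ii) adding a hypothetical $(k{+}1)$-st neighbor genuinely produces a configuration of $k+1$ \emph{strictly} separated spheres, so that the bound $k\leq\tau'(d)$ (rather than merely $k\leq\tau(d)$) is exactly what is needed. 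Everything else — the probabilistic lower bound on the number of good points, and the a.s. concentration — is routine given the cited asymptotic formula for $\Lambda_k$.
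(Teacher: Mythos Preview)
Your lower-bound step (forcing $\deg_T(x_m)\geq k$) is basically right once you fix the clearance radius: as written, the empty region has radius $(2+\e/2)\rho$, which is \emph{smaller} than $\|x_m-y_i\|\approx(2+\e)\rho$, so an external point could be nearer to $y_i$ than $x_m$ is. With a larger clearance this is fine, since the unique nearest-neighbor edge of each $y_i$ then lies in every MST.

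The upper-bound step, however, has a genuine gap. You argue that a hypothetical $(k{+}1)$-st MST-neighbor $z$ of $x_m$ would, together with $y_1,\dots,y_k$, yield $k+1$ strictly separated tangent spheres around a common sphere and hence contradict $k+1>\tau'(d)$. But $z$ lies outside the clearance region, so $\|x_m-z\|$ is at a completely different scale from $\|x_m-y_i\|$; there is no common central sphere to which suitably sized spheres at $z,y_1,\dots,y_k$ are simultaneously tangent. Even the angular fact that MST-neighbors of a vertex subtend pairwise angles $\geq 60^\circ$ only bounds the degree by the ordinary kissing number $\tau(d)$, not $\tau'(d)$. Concretely, for $k=2$ in $d=2$ your two points $y_1,y_2$ can sit at angles $0^\circ$ and $90^\circ$ from $x_m$; an external point $z$ in the opposite quadrant is then strictly closer to $x_m$ than to either $y_i$, and if $z$ happens to be the nearest external point to the cluster, the edge $\{x_m,z\}$ is forced into the MST, giving $x_m$ degree $3$. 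Nothing in your configuration rules this out.

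The paper's proof supplies exactly the missing idea: a convex-hull shield. It first shows (Lemma~\ref{l.config}) that for $d'+1\leq k\leq \tau'(d')$ one can choose the $k$ unit-sphere points so that the origin lies in their convex hull, then surrounds each of them with a small $d$-simplex so that the origin lies in the \emph{interior} of the convex hull of the resulting set $U$. Observation~\ref{o.convshield} then guarantees that any sufficiently distant point is closer to some outer point of an $(\e,D)$-copy of $U$ than to the center, which is precisely what blocks a long MST edge into $x_m$. (For $k\leq d$ one works in a lower dimension $d'$ and embeds; $k=1$ is read off from $k=3$.) Your kissing-number argument cannot substitute for this shielding step.
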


Considering Euclidean functionals $\mst_k(\cX)$ (with corresponding constants $\bmstk^d$) defined as the minimum length of a spanning tree of $\cX$ whose vertices all have degree $\leq k$, we will then get separation as follows:
\begin{theorem}\label{t.mstseq}
We have that 
\begin{equation}
\btsp^d=\bmsT2^d>\bmsT3^d>\cdots >\bmsT{\t'(d)}^d=\bmst^d 
\end{equation}
for all $d$.
\end{theorem}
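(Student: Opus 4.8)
The plan is to prove the three kinds of relations in the chain separately: the two equalities at the ends, the (immediate) monotonicity in the middle, and the strict inequalities, which are the substance. For the left equality, note that a spanning tree all of whose degrees are at most $2$ is exactly a Hamiltonian path, so $\mst_2(\cX)$ is the minimum length of a Hamiltonian path on $\cX$; since a minimum Hamiltonian path and a minimum tour differ in length by at most $\diam([0,1]^d)=\sqrt d$ (add an edge between the two ends of an optimal path to get a tour; delete the longest edge of an optimal tour to get a path), the two functionals share the same constant, so $\bmsT2^d=\btsp^d$. For the right equality it suffices to show that a.s.\ the unconstrained minimum spanning tree of $\xdn$ already has maximum degree at most $\t'(d)$, whence $\mst_{\t'(d)}(\xdn)=\mst(\xdn)$ a.s. This is the standard edge--exchange bound sharpened by general position: if a vertex $v$ has minimum-spanning-tree neighbours $u_1,\dots,u_m$ then replacing $vu_j$ by $u_iu_j$ is again a spanning tree, so $|u_iu_j|\ge\max(|vu_i|,|vu_j|)$, hence each pair of edges at $v$ subtends an angle $\ge 60^\circ$; if all of these (finitely many) angles were $>60^\circ$ they would be bounded below by $60^\circ+\eta$ for some $\eta>0$, which forces $m\le\t'(d)$ directly from the definition of the strict kissing number; and if some pair subtends exactly $60^\circ$, the law of cosines together with $|u_iu_j|\ge\max(|vu_i|,|vu_j|)$ forces $|vu_i|=|vu_j|$, an event of probability $0$ for i.i.d.\ continuously distributed points. (That each $\mst_k$ is a smooth subadditive Euclidean functional, so that $\bmsT{k}^d$ exists, is established in Section~\ref{s.EF}; this also gives $\bmsT{k+1}^d\le\bmsT{k}^d$ at once, since every spanning tree of maximum degree $\le k$ is one of maximum degree $\le k+1$, so $\mst_{k+1}\le\mst_k$ pointwise.)

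It remains to prove $\bmsT{k+1}^d<\bmsT{k}^d$ for $2\le k\le\t'(d)-1$, by a local geometric argument. Since $k+1\le\t'(d)$, the definition of the strict kissing number provides $k+1$ unit vectors in $\Re^d$ whose pairwise angles all exceed $60^\circ$ by a fixed amount, and a short computation with the law of cosines then yields a nonempty open family $\cG$ of configurations $Q=\{v,u_1,\dots,u_{k+1}\}$ — a centre $v$ and $k+1$ satellites at nearly equal distances $\approx\r_0$ from $v$ — with the property that the star $K_{1,k+1}$ centred at $v$ is the unique minimum spanning tree of $Q$ and is shorter than every other spanning tree of $Q$ (equivalently, than every spanning tree of $Q$ of maximum degree $\le k$, since the star is the only spanning tree of $k+2$ points with a vertex of degree $k+1$) by at least a fixed amount $\d_0>0$, uniformly over a compact subfamily of $\cG$. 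Rescale so that $\xdn\subset[0,1]^d$, and call a point of $\xdn$ the centre of a \emph{good cluster} if a ball of radius $\r_0 n^{-1/d}$ about it contains a scaled copy of some $Q\in\cG$ and no further point, while every remaining point of $\xdn$ lies at distance at least $R\,n^{-1/d}$ from it, where $R$ is a large constant (the surrounding \emph{moat}). A routine first/second moment estimate over a grid of $\Theta(n)$ disjoint candidate regions shows that a.s.\ there are at least $c_1 n$ pairwise disjoint good clusters, with disjoint moats.

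Now fix an optimal degree-$\le k$ spanning tree $T'$ of $\xdn$; the idea is to build a strictly shorter degree-$\le(k+1)$ tree by replacing, inside each good cluster $Q$, whatever $T'$ does on $Q$ by the star on $Q$. At a good cluster that is \emph{pendant} in $T'$ — meaning $T'$ induces a spanning tree of $Q$ joined to the rest of $T'$ by a single edge — this swap is immediate: replace that spanning tree (of maximum degree $\le k$) by the star, reattaching the crossing edge to a leaf; the result is again a spanning tree of maximum degree $\le k+1$, and it is shorter by at least $\d_0 n^{-1/d}$. Since the clusters and their moats are disjoint, such swaps do not interfere and the savings accumulate. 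The point to establish is that, up to a total error $o(n^{(d-1)/d})$, essentially all good clusters may be taken pendant in $T'$: because each is surrounded by an empty moat of radius $R\,n^{-1/d}$, using any of its points as a ``through-route'' forces extra crossing edges, each of length $\ge (R-\r_0)n^{-1/d}$, and a near-optimal tree (of total length $\Theta(n^{(d-1)/d})$) can afford only few such long edges; with the parameters chosen appropriately the non-pendant good clusters are a negligible fraction of $c_1 n$ and their effect can be reconnected at negligible cost. This would give $\mst_{k+1}(\xdn)\le\mst_k(\xdn)-\d_0 c_1 n^{(d-1)/d}(1+o(1))$ a.s., and dividing by $n^{(d-1)/d}$ yields $\bmsT{k+1}^d\le\bmsT{k}^d-\d_0 c_1<\bmsT{k}^d$, completing the chain.

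The delicate point — and the main obstacle — is precisely this last accumulation step: one must quantify how much length an optimal degree-$\le k$ tree is forced to spend around the $\Theta(n)$ good clusters, ruling out the possibility that it systematically routes through the moated clusters (or otherwise realises the local cluster structure cheaply) so as to evade the per-cluster penalty, and one must balance the moat radius $R$ against the frequency $c_1$ of good clusters, which also depends on $R$. Once the right ``good cluster'' event is chosen this is a patching argument of the kind familiar from the theory of subadditive Euclidean functionals, but it has to be set up carefully; everything else in the proof is essentially routine.
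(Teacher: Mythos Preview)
You identify the gap yourself, but it is a real one: you never establish that most good clusters are pendant in the optimal $\mst_k$ tree, and the length-budget heuristic you sketch does not close it. The density $c_1$ of good clusters decays like $e^{-cR^d}$ in the moat radius $R$ (the moat must be empty of points), while the number of moat-crossing edges a tree of length $\Theta(n^{(d-1)/d})$ can afford is only $O(n/R)$; for every choice of $R$ the latter dominates $c_1 n$, so you cannot conclude that non-pendant clusters are a negligible fraction. No balancing of $R$ against $c_1$ rescues this --- it is not merely a matter of ``setting up the patching argument carefully.''

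The paper sidesteps pendency entirely by building the configuration so that the centre is \emph{geometrically shielded} from the outside. To your bare $\{v,u_1,\dots,u_{k+1}\}$ it adjoins a small copy of the full $d$-simplex at distance $1.5$ along each satellite direction. These simplices serve exactly one purpose: to place the centre in the \emph{interior} of the convex hull of the configuration in all of $\Re^d$ (without them, the satellites --- which Lemma~\ref{l.config} may have to choose in a proper subspace $\Re^{d'}$ when $k+1\le d$ --- leave the centre only on the boundary of the hull). With the centre interior, Observation~\ref{o.convshield} says that for $D$ large every point outside the $(\e,D)$-copy is closer to some non-centre point of the copy than to the centre, so the centre has no neighbour outside the copy in \emph{any} spanning tree, optimal or not, constrained or not. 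One then only needs that $T[S_i]$ is connected in the optimal $\mst_k$ tree --- a weaker claim than pendency, argued by the usual long-for-short edge exchange --- after which replacing $T[S_i]$ by the MST of $S_i$ gives the centre degree exactly $k+1$ with no risk of external edges inflating it. Your bare $Q$ cannot support this shielding when $k<d$ (three satellites cannot enclose a point in $\Re^3$), which is precisely why you were forced down the pendency route.
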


Thus, the $\mst_k$ constants are as diverse as are allowed by the simple geometric constraint of $\tau'(d)$.

Still, there are only finitely many constants $\bmsT k^d$ for each $d$; while we can draw trees with very large degrees, large degrees (relative to $d$) are not useful for minimum spanning trees in Euclidean space.   In contrast to this scenario,  let us recall that a \emph{2-factor} is a disjoint set of cycles covering a given set of points.  We will see in Section \ref{s.EF} that the length of the minimum 2-factor is indeed a subadditive Euclidean functional, and thus this length satisfies $\tf(\xdn)\sim \btf^d n^{\frac{d-1}{d}}$ for some constant $\btf^d$.  Moreover, if $\tf_g(X)$ is the minimum length of a 2-factor through $X$ whose cycles all have length $\geq g$, then we will see that $\tf_g$ is also a subadditive linear functional, so that we have $\tf_g(\xdn)\sim \btfg^d n^{\frac{d-1}{d}}.$  Naturally, we must have $\btf^d=\btfG3^d\leq \btfG4^d\leq \btfG5^d\leq \cdots.$  In analogy to the high-degree vertices in a tree, we can of course draw 2-factors with small cycles, but it is not clear \emph{a priori} whether small cycles will be asymptotically essential to optimum 2-factors in random point sets.  The following theorem shows that they are:

\begin{theorem}\label{t.girthseq}
$\btfg$ is a monotone increasing sequence $\btfG3^d<\btfG4^d<\btfG5^d<\cdots $.  
\end{theorem}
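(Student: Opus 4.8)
The plan is to realize $\btfG{g+1}^d>\btfG{g}^d$ as a consequence of a \emph{local} wastefulness of girth-$\ge(g+1)$ $2$-factors. By the results of Section~\ref{s.EF}, $\tf_g$ and $\tf_{g+1}$ are smooth subadditive Euclidean functionals, so $\tf_g(\xdn)\sim\btfG{g}^d n^{\frac{d-1}{d}}$ and $\tf_{g+1}(\xdn)\sim\btfG{g+1}^d n^{\frac{d-1}{d}}$ almost surely, and $\btfG{g}^d\le\btfG{g+1}^d$ trivially (a girth-$\ge g+1$ $2$-factor has girth $\ge g$). Since both functionals are concentrated about their means (bounded-differences/Azuma, as usual for such functionals), the theorem will follow once we exhibit $\d=\d(g,d)>0$ with
\[
\tf_g(\xdn)\ \le\ \tf_{g+1}(\xdn)-\d\,n^{\frac{d-1}{d}}\qquad\text{with high probability};
\]
dividing by $n^{\frac{d-1}{d}}$ and letting $n\to\infty$ then gives $\btfG{g+1}^d\ge\btfG{g}^d+\d$.

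Fix small constants $\e,\rho>0$, to be chosen depending on $g$. Call a point $x$ the centre of a \emph{gadget} if: (i) $B(x,\e n^{-1/d})$ contains exactly $g$ points of $\xdn$ (the \emph{blob}); (ii) there is a point $y$ with $|x-y|=n^{-1/d}$ such that $B(y,\rho n^{-1/d})$ contains exactly $g+2$ points of $\xdn$ (the \emph{auxiliary cluster}); and (iii) $B(x,2n^{-1/d})$ contains no point of $\xdn$ other than these $2g+2$ points. Each of these is a local event of fixed positive probability, determined by the configuration of $\xdn$ inside a cube of side $O(n^{-1/d})$. Partitioning $[0,1]^d$ into $\Theta(n)$ such cubes and using a second moment computation together with standard concentration (and de-Poissonization), we get that with high probability $\xdn$ contains at least $\d_0 n$ gadgets whose balls $B(x,2n^{-1/d})$ are pairwise disjoint, for some constant $\d_0=\d_0(\e,\rho,g,d)>0$.

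Let $Z$ be an optimal girth-$\ge(g+1)$ $2$-factor; its total length is $O(n^{\frac{d-1}{d}})$. Consider one gadget. The only points of $\xdn$ within distance $2n^{-1/d}$ of the blob, besides the blob, are the $g+2$ auxiliary points; a short exchange argument exploiting optimality of $Z$ shows that in $Z$ the blob is traversed as a single consecutive segment of one cycle $C_0$, entered and left through two of the auxiliary points $w_a\ne w_b$ (so the two edges $w_av_1,v_gw_b$ of $C_0$ leaving the blob have length $\ge(1-\rho-\e)n^{-1/d}$), and moreover all $g+2$ auxiliary points lie on $C_0$, so $|C_0|\ge 2g+2$. (Each alternative — $C_0$ weaving in and out of the blob, $Z$ reaching the blob from outside $B(x,2n^{-1/d})$, or an auxiliary point lying on a different cycle — can be improved locally, contradicting optimality.) Now modify $Z$: delete the $g$ blob vertices from $C_0$ (removing the two edges $w_av_1,v_gw_b$ and the blob segment), reconnect the two exposed ends of $C_0$ by the edge $w_aw_b$ — since $w_a,w_b\in B(y,\rho n^{-1/d})$ this edge has length $\le 2\rho n^{-1/d}$, and since $|C_0|\ge 2g+2$ the resulting cycle has length $|C_0|-g\ge g$ — and add a $g$-cycle on the blob, of total length $\le 2g\e n^{-1/d}$. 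This is still a girth-$\ge g$ $2$-factor, and its length on this gadget is smaller than that of $Z$ by at least
\[
2(1-\rho-\e)n^{-1/d}-2\rho n^{-1/d}-2g\e n^{-1/d}\ \ge\ n^{-1/d},
\]
provided $\e,\rho$ were chosen small enough. Performing this modification at all $\ge\d_0 n$ disjoint gadgets — the modifications being local and non-interfering — produces a girth-$\ge g$ $2$-factor of length at most $\tf_{g+1}(\xdn)-\d_0 n^{\frac{d-1}{d}}$. Hence $\tf_g(\xdn)\le\tf_{g+1}(\xdn)-\d_0 n^{\frac{d-1}{d}}$ with high probability, which is what we needed with $\d=\d_0$.

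The crux is the exchange argument of the third paragraph, certifying that an optimal $Z$ really behaves as claimed at each gadget. Two points deserve care. First, one must rule out the ``thread-through'' phenomenon, in which a long cycle covers the blob cheaply along a near-diameter chord: this is precisely why condition (iii) forbids points on the far side of the blob, so that the cycle covering the blob must enter and leave it through the single small auxiliary cluster, with the two exit points forced to be within $2\rho n^{-1/d}$ of one another. Second, the girth bookkeeping during the repair must be cheap: this is why the auxiliary cluster is given $g+2$ points — enough that, after the blob is deleted from $C_0$, the shortened $C_0$ still has length $\ge g$ and no cycle ever has to be patched by an edge reaching outside $B(x,2n^{-1/d})$. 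Finally, it is essential that every length scale of the gadget be $\Theta(n^{-1/d})$: a gadget requiring an empty region of radius $Rn^{-1/d}$ with $R\to\infty$ would occur with density exponentially small in $R$, whereas the number of edges of $Z$ longer than $Rn^{-1/d}$ is only $O(n/R)$, so no saving of this kind would survive.
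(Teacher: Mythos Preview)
Your exchange argument is the weak link, and the gap is real. You assert that in the optimal $Z$ the blob is entered and left through the auxiliary cluster, and that all $g+2$ auxiliary points lie on the same cycle $C_0$ as the blob. Neither follows from optimality. Consider the configuration in which the auxiliary cluster forms its own $(g+2)$-cycle (permitted, since $g+2\ge g+1$) while the cycle through the blob enters from a point $p$ and exits to a point $q$, both just outside $B(x,2n^{-1/d})$ and roughly antipodal through $x$. Your condition (iii) empties only the \emph{interior} of $B(x,2n^{-1/d})$; it places no constraint on the location of points outside, so such $p,q$ are available at a positive fraction of gadgets. In this scenario your repair is unavailable---the endpoints adjacent to the blob are $p,q$, not auxiliary points---and the natural alternative of shortcutting $pq$ saves nothing, since $\|p-q\|\approx\|p-v_1\|+\|v_g-q\|$. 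Concretely, place the blob at the origin, the auxiliary cluster near $e_1$, and take $p,q$ near $\pm 2.1\,e_2$; then routing $p\to\text{blob}\to q$ with the auxiliary cluster as its own cycle is strictly cheaper than any routing that forces the blob's neighbours to lie in the auxiliary cluster, and your modification recovers no constant. Your remark that condition (iii) ``forbids points on the far side of the blob'' does not address this: the far side is empty only out to radius $2n^{-1/d}$.

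The paper confronts exactly this thread-through obstruction, and its construction is accordingly more elaborate. Rather than a single blob with a designated exit cluster, it takes three $g$-point clusters $U_1,U_2,U_3$ at the vertices of an equilateral triangle of side $2D$, the whole configuration $V$ isolated at a further scale $\Delta\gg D$. Two geometric lemmas then do the work: if some pair of long edges leaving a $U_i$ is \emph{not} nearly straight, one saves a constant immediately (Lemma~\ref{parallel}); otherwise, the triangle geometry forces at least two nearly-straight pairs to leave $V$ for points at distance $\ge\Delta$, and Lemma~\ref{o.4points} shows that among the four far endpoints of any two such pairs there is a matching saving $\Omega(\Delta)$. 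Because these shortcuts can create short cycles (even $2$-cycles) elsewhere, the paper additionally surrounds $V$ by an $\e$-net on a sphere of radius $2\Delta$ and repairs girth by merging through the net at cost $O(\e)$ per merge. Your single asymmetric auxiliary cluster supplies neither the angular control nor the multiplicity of exit pairs these lemmas exploit; without them the optimality exchange you invoke need not hold, and the claimed saving of $n^{-1/d}$ per gadget is not established.
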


On the other hand, we prove that 2-factors with long (but constant) girth requirements produce close approximations to the TSP:
\begin{theorem}\label{t.2flimit}
 $\lim\limits_{g\to \infty} \btfg^d=\btsp^d$.
\end{theorem}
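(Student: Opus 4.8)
The plan is to establish the deterministic inequality
\[
\tsp(S)\ \le\ \tf_g(S)\ +\ c_d\,(|S|/g)^{\frac{d-1}{d}}
\]
valid for every finite $S\subseteq[0,1]^d$ with $|S|\ge g$, where $c_d$ depends only on $d$. Together with the trivial bound $\tf_g(S)\le\tsp(S)$ (any Hamilton cycle on $S$ is a $2$-factor of girth $|S|\ge g$) and the asymptotic formulas $\tsp(\xdn)\sim\btsp^d n^{\frac{d-1}{d}}$ from \eqref{e.bhh} and $\tf_g(\xdn)\sim\btfg^d n^{\frac{d-1}{d}}$, applying this with $S=\xdn$ and letting $n\to\infty$ gives, a.s.,
\[
\btsp^d\ \le\ \btfg^d + c_d\, g^{-\frac{d-1}{d}}\ \le\ \btsp^d ,
\]
so that letting $g\to\infty$ yields $\btfg^d\to\btsp^d$. (In particular this re-derives that the sequence is bounded; its monotonicity, Theorem~\ref{t.girthseq}, is not needed here.)

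The inequality is proved by \emph{patching}. The elementary step is: if $A$ and $B$ are vertex-disjoint cycles, $a\in A$ and $b\in B$, then deleting one edge of $A$ incident to $a$ and one edge of $B$ incident to $b$ leaves two paths, and joining their endpoints in the appropriate way produces a single cycle on $V(A)\cup V(B)$ whose length exceeds that of $A$ and $B$ combined by at most $2|a-b|$ --- the bound being the triangle inequality applied to the two newly added edges. Now fix $g$, let $S\subseteq[0,1]^d$ with $n:=|S|\ge g$, and let $F$ be an optimal $2$-factor of $S$ of girth $\ge g$, with cycles $C_1,\dots,C_m$. Since the $C_i$ are disjoint and each has at least $g$ vertices, $m\le n/g$. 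Choose a representative $v_i\in V(C_i)$ for each $i$; choose a permutation $\pi$ minimising $\sum_{i=1}^{m-1}|v_{\pi(i)}-v_{\pi(i+1)}|$ (the vertices of a shortest Hamilton path through $\{v_1,\dots,v_m\}$) and relabel the cycles so that $\pi$ is the identity. Patch $C_1,\dots,C_m$ together in this order: at step $i$ merge the current cycle (which still contains $v_i$, as it contains $C_i$) with $C_{i+1}$, patching at the pair $v_i,v_{i+1}$. At every moment each vertex has degree exactly $2$, so an edge incident to $v_i$ is always available to delete, and after $m-1$ steps we obtain a single Hamilton cycle on $S$ of length at most
\[
\tf_g(S) + 2\sum_{i=1}^{m-1}|v_i-v_{i+1}|\ \le\ \tf_g(S) + 2\,\tsp(\{v_1,\dots,v_m\}),
\]
using that a shortest Hamilton path is no longer than a shortest tour. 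Finally $\{v_1,\dots,v_m\}$ is a set of $m\le n/g$ points in $[0,1]^d$, so the classical worst-case upper bound for the Euclidean TSP (Few's theorem for $d=2$ and its higher-dimensional analogue, obtainable e.g. from the space-filling-curve heuristic) gives $\tsp(\{v_1,\dots,v_m\})\le c_d\,m^{\frac{d-1}{d}}\le c_d\,(n/g)^{\frac{d-1}{d}}$, and since $\tsp(S)$ is at most the length of the Hamilton cycle just built, the desired inequality follows.

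There is essentially no probabilistic content here: randomness enters only through the two asymptotic length formulas, the rest being the deterministic bound above. The only genuine care needed is in the sequential patching --- verifying that at each step the current representative of the growing cycle still has an incident edge to delete (true, since every vertex stays of degree $2$) and that the two endpoints joined at each step are distinct and lie in different cycles, so that no degenerate edge is created --- together with a precise statement and proof of the single-patch estimate $2|a-b|$. The one external ingredient, the $O(k^{\frac{d-1}{d}})$ worst-case bound for the TSP on $k$ points in the unit cube, is entirely standard.
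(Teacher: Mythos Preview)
Your proof is correct, and it takes a genuinely different---and simpler---route than the paper's.

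The paper's Section~\ref{s.lim} argument is probabilistic: it works in $[0,t]^d$, partitions into subcubes of side $L$, and uses McDiarmid's inequality to bound, for each scale $r$, the number $\nu_r$ of subcubes with an empty quadrant of radius $rL$. Cycles of the optimal girth-$g$ $2$-factor are then merged greedily: first all pairs with representatives in subcubes at distance $\le L^2$ (at most $n/g$ merges, each costing $O(L^2)$), then at increasing scales $r=L+1,\dots,r_0$, where the number of merges at scale $r$ is controlled by $\nu_r$. With $L=g^{2/3}$ this yields $\btsp^d\le\btfg^d+O(g^{-1/4})$ a.s.

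Your argument bypasses all of this by proving the deterministic inequality $\tsp(S)\le\tf_g(S)+c_d(|S|/g)^{(d-1)/d}$ directly: pick one representative per cycle (at most $|S|/g$ of them), run a shortest Hamilton path through the representatives using the Few/T\'oth worst-case bound $O(m^{(d-1)/d})$, and patch along that path with the elementary $2|a-b|$ estimate per merge. This is cleaner, uses no concentration inequalities, and even gives a sharper error term $O(g^{-(d-1)/d})$ in place of $O(g^{-1/4})$. The only ingredient the paper's approach would retain as an advantage is that it does not invoke the worst-case TSP bound---but that bound is entirely classical (and indeed the paper uses it elsewhere), so your argument is strictly preferable for this theorem.
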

With a bit more work, our method for proving Theorem \ref{t.girthseq} will also allow us to deduce the following:
\begin{theorem}\label{t.mmnottsp}
$2\bmm^d<\btsp^d$.
\end{theorem}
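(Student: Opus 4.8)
The plan is to prove the quantitative statement $\tsp(\xdn)\ge 2\mm(\xdn)+\Omega(n^{\frac{d-1}{d}})$ whp. The easy half, $2\mm(\xdn)\le\tsp(\xdn)$ for even $n$, I would get by taking an optimal tour, $2$-colouring its edges alternately into perfect matchings $M_1\sqcup M_2$ with $|M_1|+|M_2|=\tsp(\xdn)$, and keeping the shorter one (for odd $n$, delete one vertex first, which perturbs every functional by $o(n^{\frac{d-1}{d}})$); this already gives $2\bmm^d\le\btsp^d$. To rule out equality I would reduce to separating $2\bmm^d$ from $\btfg^d$ for a suitably chosen fixed (large) $g$: since the $\btfg^d$ are strictly increasing with limit $\btsp^d$ (Theorems~\ref{t.girthseq} and \ref{t.2flimit}), $\btfg^d<\btsp^d$ for every finite $g$, so $2\bmm^d<\btfg^d$ would suffice. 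This brings the problem within range of the local-surgery scheme behind Theorem~\ref{t.girthseq}, which is presumably what the statement means by ``a bit more work''.

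The bridge between the functionals would be a cycle-by-cycle conversion of $2$-factors into matchings. Let $F$ be an optimal girth-$\ge g$ $2$-factor of $\xdn$ --- a disjoint union of cycles of bounded length. On each even cycle $C$ I would replace $C$ by twice a minimum-length perfect matching of $V(C)$: this never increases the length, and it decreases it by the slack $\sigma(C):=|C|-2\mm(V(C))\ge 0$, which is strictly positive unless $C$ is geometrically degenerate (e.g.\ a regular polygon). The odd cycles of $F$ occur in an even number (a parity count, using that $n$ is even), so they can be paired off by geometric proximity and patched across the pairs; the one ``orphan'' vertex left by each odd cycle then has to be rematched, at a total cost that, by the subadditivity of $\mm$ together with the bounded sizes and spatial spreading of the cycles of $F$, should be $o(n^{\frac{d-1}{d}})$. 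The outcome would be $2\mm(\xdn)\le\tf_g(\xdn)-\sum_{C}\sigma(C)+o(n^{\frac{d-1}{d}})$, reducing the theorem to the claim $\sum_{C}\sigma(C)=\Omega(n^{\frac{d-1}{d}})$ whp.

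To establish that, I would argue about the local statistics of the \emph{optimal} $2$-factor, in the spirit of Theorem~\ref{t.posdeg} and \cite{leaves}: with positive probability a constant-size region of $\xdn$ carries a point pattern that forces every optimal girth-$\ge g$ $2$-factor to cover it by a cycle whose vertex set has a perfect matching shorter than half the cycle's perimeter by $\Omega(n^{-1/d})$; summing the per-region contributions over a positive density of disjoint such regions, and absorbing the gluing and boundary error into the subadditive-functional bookkeeping already used for Theorem~\ref{t.girthseq}, gives $\sum_C\sigma(C)=\Omega(n^{\frac{d-1}{d}})$.

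I expect this last step to be the main obstacle, and it is delicate on two counts. First, $\sigma(C)$ must be bounded below on a positive fraction of the cycles of the \emph{optimum}, so one has to rule out that optimality forces $F$ to use only near-regular, slack-free cycles --- a non-degeneracy property of the optimal $2$-factor, not merely of a typical configuration. Second, a naive ``planted isolated cluster'' will not do by itself: a bounded isolated cluster is covered by a tour, by a $2$-factor, and by a doubled matching all at cost $\Theta(n^{-1/d})$, so the separation cannot come from a single local pattern and must be extracted from the finer structure of the optimum --- while, at the same time, the number, sizes and mutual distances of the odd cycles of $F$ have to be controlled well enough that the orphan-patching of the second paragraph is genuinely of lower order. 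Both would be handled inside the boundary-functional and smoothing machinery set up for Theorem~\ref{t.girthseq}.
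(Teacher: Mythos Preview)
Your route through $2$-factors is genuinely different from the paper's, and the detour creates a gap you have not closed. The paper works directly from the tour: it decomposes an optimal TSP tour into two \emph{disjoint} perfect matchings $M_1,M_2$ (alternating edges), and then finds linearly many local configurations where one of $M_1,M_2$ can be shortened by a constant. The configuration is a pentagon of five isolated pairs $U_1,\dots,U_5$ (each $|U_i|=2$); since $M_1,M_2$ are disjoint, pigeonhole forces one $M_s$ to omit the internal edge of at least three of the $U_i$'s, and those three $U_i$'s are never nearly collinear in a pentagon. Then the same ``nearly straight'' dichotomy and Lemma~\ref{o.4points} that drive Theorem~\ref{t.girthseq} yield the improvement. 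The ``bit more work'' is precisely this pentagon/pigeonhole twist, not a passage through $\tf_g$.

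Your claim that the orphan cost from odd cycles is $o(n^{(d-1)/d})$ is the concrete error. For fixed $g$, the optimal girth-$g$ $2$-factor can have $\Theta(n)$ odd cycles, and matching the resulting $\Theta(n)$ orphan vertices in $[0,1]^d$ costs $\Theta(n^{(d-1)/d})$ in general, the same order as the slack you are trying to harvest. So the inequality you arrive at is $2\mm\le \tf_g-\Sigma+O_g(n^{(d-1)/d})$ with two competing same-order terms, and you would have to compare their constants --- which you do not do, and which is not obviously favourable (the slack constant from Observation~\ref{o.findit} decays with the size and isolation parameters of your planted clusters). The second difficulty you flag --- forcing the \emph{optimal} $2$-factor to place a specific cycle on a planted cluster --- is exactly what the casework in the proof of Theorem~\ref{t.girthseq} is built to avoid: that proof never pins down which cycle of the optimum covers the cluster, it just shows that every possible local picture admits a shortening. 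Your scheme, by contrast, needs to know the cycle to evaluate $\sigma(C)$.

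Starting from the tour rather than a $2$-factor removes both obstacles at once: the tour is a single (long) cycle, so there is no odd-cycle bookkeeping, and the two matchings you get are automatically disjoint, which is what makes the pentagon pigeonhole work.
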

We note in contrast to Theorem \ref{t.girthseq} that in the independent case where the edge lengths $X_e,e\in \binom{[n]}{2}$ are independent uniform $[0,1]$ random variables, Frieze \cite{F00} showed that with probability $1-o(1)$, the weight of the minimal 2-factor is asymptotically equal to the minimum length of a tour.

We continue by mentioning a natural generalization of $\mm(\xdn)$.  Given a fixed graph $H$ on $k$ vertices, an $H$-factor of a set of points $S$ is a set of edges isomorphic to $\flr{|X|/k}$ vertex disjoint copies of $H$.  As a subadditive Euclidean functional, the minimum length $\hf(\xdn)$ of an $H$ factor of $\xdn$ satisfies
\[
\hf(\xdn)\sim \b_H^d n^{\frac{d-1}{d}}.
\]
We pose the following conjecture:
\begin{conjecture}\label{ccc}
Given $H_1,H_2$ and $d\geq 2$, we have that $\b^d_{H_1}\neq \b^d_{H_2}$ unless $H_1$ and $H_2$ are each isomorphic to a disjoint union of copies of some graph $H_3$.  In particular, $\b^d_{H_1}\neq \b^d_{H_2}$ if $H_1,H_2$ are connected and non-isomorphic.
\end{conjecture}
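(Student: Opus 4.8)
The plan is to first reduce to the case of ``primitive'' graphs and then attempt a local exchange argument in the spirit of the proofs of Theorems~\ref{t.girthseq} and~\ref{t.mmnottsp}. Call a graph $P$ \emph{primitive} if it is not isomorphic to a disjoint union of two or more copies of a single smaller graph; every graph $H$ is a disjoint union of $g(H)\geq 1$ copies of a unique primitive graph $\mathrm{root}(H)$ (take the multiset of connected components of $H$ and divide all multiplicities by their gcd). The first step is the \emph{blow-up invariance} $\b^d_H=\b^d_{\mathrm{root}(H)}$. For $\b^d_{tP}\leq \b^d_P$ one groups the $P$-pieces of a near-optimal $P$-factor into blocks of $t$ and, on a sparse set of $o(n)$ points, repartitions so that block sizes become exact multiples of $t|P|$; the rerouting is local and costs $o(n^{\frac{d-1}{d}})$. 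For $\b^d_{tP}\geq \b^d_P$, a $tP$-factor is, after deleting at most $t$ unused points (which the standard smoothness estimate for subadditive functionals absorbs at cost $o(n^{\frac{d-1}{d}})$), also a $P$-factor. Since every connected graph is primitive, and since $H_1$ and $H_2$ are each disjoint unions of copies of a common graph exactly when $\mathrm{root}(H_1)\cong\mathrm{root}(H_2)$, Conjecture~\ref{ccc} is equivalent to the statement that $\b^d_{P_1}\neq\b^d_{P_2}$ whenever $P_1,P_2$ are primitive and non-isomorphic.

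Fix such $P_1,P_2$ and suppose, for contradiction, that $\b^d_{P_1}=\b^d_{P_2}=:\b$. As in the proof of Theorem~\ref{t.girthseq}, I would work with a near-optimal $P_2$-factor $F$ of $\xdn$ and show that a positive density $\d n$ of the points of $\xdn$ lie in disjoint ``good windows'': small cubes containing a prescribed generic cluster of points, whose $F$-structure is also generic, and which are far from all other good windows. The aim is to exhibit, inside a good window together with the $P_2$-pieces meeting it, a local reorganization into $P_1$-pieces (allowing the new pieces to cross the window boundary in order to absorb the fragments created) of strictly smaller total length; performing this independently in each of the $\d n$ windows and patching along a sparse scaffold would then produce a $P_1$-factor of length at most $(\b-c)n^{\frac{d-1}{d}}$ for a constant $c=c(P_1,P_2,d)>0$, contradicting $\b^d_{P_1}=\b$. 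The existence of such a favorable local reorganization should follow from $P_1$ and $P_2$ being distinct primitive graphs: the optimal ``local packings'' of a generic cluster by copies of $P_1$ versus by copies of $P_2$ cannot coincide, so one of the two directions of exchange is strictly profitable --- and if neither were profitable in any generic window, one would need a rigidity argument to conclude $P_1\cong P_2$.

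The main obstacle is precisely that last rigidity step, together with the rigidity of $H$-factors themselves. Unlike spanning trees, bounded-degree spanning trees, and $2$-factors --- all of which can be modified by purely local rerouting --- an $H$-factor must globally partition into exact copies of $H$, so even a small local change forces a cascade of realignments; controlling this needs a genuine patching construction: a sparse, pre-planted scaffold of buffer points (as in the patching devices used for the other theorems in this paper) whose total length is $o(n^{\frac{d-1}{d}})$ and which can repair the boundaries left by the $\d n$ local exchanges. Moreover, converting ``$P_1\not\cong P_2$'' into a quantitative local advantage requires understanding the optimal packing density well enough to see that distinct primitives really do yield distinct densities, which runs into the same analytic difficulties that have kept the exact values of $\btsp^d$ and $\bmst^d$ out of reach; even the first nontrivial instance, $P_1=K_2$ (so $\b^d_{P_1}=2\bmm^d$) versus $P_2$ the path on three vertices, appears to require a new idea. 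For these reasons I expect the conjecture to be hard in general, although the reduction to primitive graphs and the exchange scheme above should at least dispatch many explicit small cases.
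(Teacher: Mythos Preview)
The statement you are attempting is labeled as a \emph{Conjecture} in the paper, and the paper does not prove it. There is no proof to compare against: the authors pose Conjecture~\ref{ccc} as an open problem and establish only the much weaker Theorem~\ref{t.hfac}, which shows that for each fixed edge density there are infinitely many distinct constants $\b^d_H$, by exhibiting stars $T_k$ with $\b^d_{T_k}$ arbitrarily large. That argument is entirely different in character from what you propose --- it is a crude lower bound obtained by forcing many high-degree vertices to have long edges, not a separation of two specific graphs.

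Your reduction to primitive graphs is a genuine observation and appears correct: the blow-up invariance $\b^d_{tP}=\b^d_P$ follows because a $tP$-factor and a $P$-factor differ only in covering $O(1)$ points, which costs $o(n^{(d-1)/d})$. This cleanly isolates the content of the conjecture. However, you yourself identify the real gap: the ``rigidity step'' converting $P_1\not\cong P_2$ into a quantitative local advantage is not an argument but a hope, and the cascade problem for $H$-factor modifications is exactly why the paper's local-exchange machinery does not extend. Your closing assessment --- that the conjecture is hard in general and that even the case $P_1=K_2$ versus $P_2=P_3$ seems to need a new idea --- is accurate and honest, but it means what you have written is a research outline rather than a proof. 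The paper's authors evidently reached the same conclusion, which is why they stated it as a conjecture.
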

We prove at least the following, showing diversity in the constants even for fixed edge density:
\begin{theorem}\label{t.hfac}
For any fixed $d\geq 2$ and rational $r\geq 1$, there are infinitely many distinct constants $\b^d_H$ over connected graphs $H$ with edge density $\frac{|E(G)|}{|V(G)|}=r$.
\end{theorem}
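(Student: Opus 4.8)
The plan is to fix $d\geq 2$ and a rational edge density $r=a/b\geq 1$, and produce an infinite family of connected graphs $\{H_m\}_{m\geq m_0}$, each with $\frac{|E(H_m)|}{|V(H_m)|}=r$, whose constants $\b^d_{H_m}$ are pairwise distinct. The natural construction is to take a ``gadget'' $G$ with exactly the right edge density together with a long path or cycle attached, so that as $m$ grows the graph $H_m$ is forced to contain, in any $H_m$-factor, many vertices of degree $2$ lying on long induced paths. Concretely, let $C$ be any fixed connected graph with $\frac{|E(C)|}{|V(C)|}=r$ (for instance, take $b$ disjoint copies of a suitable small dense graph and join them into a connected whole by a few extra edges, then pad with pendant edges to tune the ratio — since $r\geq 1$ this is routine), and let $H_m$ be obtained from $C$ by subdividing one edge into a path of length roughly proportional to $m$ while simultaneously adding $\approx (r-1)$ times as many pendant or parallel structures to keep the density exactly $r$. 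The key point is that $H_m$ contains a long \emph{induced path}, all of whose internal vertices have degree $2$ in $H_m$, and this forces any $H_m$-factor of $\xdn$ to use, on a positive fraction of its copies, edge-configurations that look locally like a long path.

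The core of the argument then mirrors the local-surgery technique used to prove Theorem \ref{t.girthseq} (and Theorem \ref{t.mmnottsp}): I would show that $\b^d_{H_m}$ is \emph{strictly monotone} in $m$ for $m$ in an appropriate infinite subsequence. The upper-bound direction ($\b^d_{H_{m}}\leq \b^d_{H_{m'}}+o(1)$ type statements, or rather a lower bound $\b^d_{H_{m'}}\geq \b^d_{H_m}+\Omega_d(1)$ for $m'$ sufficiently larger than $m$) comes from the fact that longer forced induced paths are ``more rigid'' and hence more expensive: in a minimum $H_{m'}$-factor one can find, whp, a positive density of copies of $H_{m'}$ such that breaking the long path and re-connecting its pieces using a cheaper local structure — which is legal because the pieces can be reassembled into copies of the \emph{shorter} graph $H_m$ plus a bounded amount of extra matching — strictly decreases total length by a constant times $n^{(d-1)/d}$. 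This is exactly the ``long cycles/paths are wasteful in random Euclidean point sets'' phenomenon: a generic collection of points admits a spanning structure with bounded local complexity that is cheaper than one forced to route through long induced paths. Making this precise requires the Steele/BHH subadditivity machinery (to pass between the deterministic surgery on a bounded region and the asymptotic constant) together with the observation, in the spirit of Theorem \ref{t.posdeg}, that a positive fraction of points in $\xdn$ sit in ``nice'' local configurations where the surgery applies.

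The main obstacle, I expect, is the bookkeeping that keeps the edge density \emph{exactly} $r$ throughout the construction while still guaranteeing the forced long induced path — one must simultaneously add enough edges (to maintain $r\geq 1$) without letting those added edges give the path's interior vertices degree $>2$ or otherwise provide cheap alternate connections that defeat the surgery. A clean way around this is to put all the ``extra density'' far from the long path: attach it to a single bounded gadget $C_0$ of density $r$ and let $H_m = C_0 \cup (\text{path}_m)$ glued at one vertex, noting that $\frac{|E(H_m)|}{|V(H_m)|}\to 1$ as $m\to\infty$, so this only directly handles $r=1$; for general rational $r>1$ one instead glues $\Theta(m)$ copies of a density-$r$ gadget in a long \emph{chain}, so the whole graph has density $r$ and contains a linear-length sequence of cut vertices that plays the role of the long path. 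Then the surgery is performed on the chain: any $H_m$-factor, restricted to a typical long stretch of the chain, can be rerouted into two shorter chains (copies of $H_{m/2}$-like graphs) plus $O(1)$ local repair, strictly saving length. Establishing that this rerouting is always available — i.e.\ that the relevant local point configurations occur with positive density and that the saved length is bounded below by an absolute constant independent of $n$ — is the technical heart, but it is precisely the kind of estimate already carried out for Theorems \ref{t.girthseq} and \ref{t.mmnottsp}, so I would invoke those arguments essentially verbatim, changing only the description of the gadget being inserted.
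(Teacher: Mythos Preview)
Your approach diverges entirely from the paper's, and although something along these lines might ultimately be made to work, it is far harder than necessary and has a real gap as written. The paper does not separate consecutive members of any family; it simply shows that $\b^d_H$ is \emph{unbounded} over connected $H$ of density $r$, which trivially yields infinitely many distinct values. The source of unboundedness is high degree, not long paths: for the star $T_k$ (one center, $k$ leaves), any $T_k$-factor of $\ydn$ has roughly $n/(k+1)$ centers of degree $k$, and a subcube-density argument shows that when $k=2(3u)^d$ for a large constant $u$, a.s.\ most such centers are ``good'' (their $3^d$-subcube neighborhood holds only about $(3u)^d$ points) and must therefore send nearly $k/2$ of their $k$ edges to distance at least $u$. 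This gives total length $\gtrsim un/5$ with $u$ arbitrary, so $\b^d_{T_k}\to\infty$. To reach density exactly $r=a/b$ one then adds edges: embed $T_k$ as a spanning subgraph of some connected $H_k$ on the same $k+1$ vertices with $r(k+1)$ edges (possible once $b\mid k+1$ and $k\geq 2r$); since every $H_k$-factor contains a $T_k$-factor on the same vertex set, $\b^d_{H_k}\geq \b^d_{T_k}\to\infty$.

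The gap in your proposal is that the surgery step is asserted rather than carried out. In Theorem~\ref{t.girthseq} the saving is obtained by exhibiting a specific $(\e,D)$-configuration and verifying, case by case, that the optimal $\tf_{g+1}$ restricted there can be replaced by something strictly cheaper that is legal for $\tf_g$; crucially, the replacement works because $\tf_g$ is a \emph{flexible} object---any collection of cycles of length $\geq g$ qualifies, so leftover points can be absorbed by merging cycles. An $H_m$-factor is far more rigid: every component must be an exact copy of $H_m$. You have not identified the configuration that forces a saving, nor explained why the cheaper local replacement can be completed to a global $H_m$-factor (the ``$O(1)$ local repair'' and ``bounded amount of extra matching'' you invoke have no obvious home in such a factor, and $|V(H_m)|$ need not divide $|V(H_{m'})|$). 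For $r>1$ there is a further issue: your chain-of-gadgets graph has cut vertices but no long run of degree-$2$ vertices, so the path-rerouting picture you sketch does not literally apply to the graphs you construct. The claim that the estimates transfer ``essentially verbatim'' from Theorems~\ref{t.girthseq} and~\ref{t.mmnottsp} is precisely where the real argument would have to be.
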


Our final separation result concerns the linear programming relaxation of the TSP.  The TSP through a set of points can be given as the following integer program, on variables $x_{\{ij\}}$ $(i,j\in V, i<j)$ indicating the presence of an edge between vertices $i$ and $j$ in the tour, where $c_{\{ij\}}$ gives the cost of the edge $\{i,j\}$:
\newcounter{eq}
\setcounter{eq}{0}

\[
\arraycolsep=1.4pt\def\arraystretch{2.2}
\begin{array}{rrl}
\multicolumn{3}{c}{\displaystyle    \min \sum_{\{i,j\}\sbs V} c_{\{ij\}}x_{\{ij\}}}\\
\multicolumn{3}{c}{\mbox{ subject to }}\\
\hline
    (\forall i)& \displaystyle \sum_{j\neq i} x_{\{ij\}}&=2\\
  (\forall \empt\neq S\subsetneq V)& \displaystyle\sum_{\{i,j\}\sbs S}x_{\{ij\}}&\leq |S|-1\\
   (\forall i<j\in V)&\displaystyle x_{\{ij\}}&\in \{0,1\}\\
\end{array}
\]

The linear programming relaxation of the above integer program replaces the final constraint with the requirements that $0\leq x_{\{ij\}}\leq 1$ for each $x_{\{ij\}}$. It is often referred to as the Held-Karp relaxation. of the TSP, but its origins go back to the the paper of Dantzig, Fulkerson and Johnson \cite{DFJ54}.  It should be noted that, although this LP has an exponential number of constraints, it can be solved in polynomial time, e.g., using the ellipsoid algorithm \cite{GLS}.

We denote by $\hk(\xdn)$ the value of a solution to the Held-Karp relaxation on $\xdn$.  Of course, $\hk(\xdn)\leq \tsp(\xdn)$.  The Held-Karp bound on the TSP is generally considered to be a good bound which is algorithmically useful on `typical instances' (see \cite{C,HeK,J,ST,VJ}), and, with this motivation, Goemans and Bertsimas \cite{GB} showed that $\hk(\xdn)\sim \bhk n^{\frac{d-1}{d}}$ for some constant $\bhk$, and asked whether $\bhk=\btsp$.  Experimental evidence has suggested that any gap between the constants would be less than $1\%$. \cite{JMR}

\begin{theorem}\label{t.hk}For all $d\geq 2$,
$\bhk^d<\btsp^d$.
\end{theorem}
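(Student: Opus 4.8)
The plan is to exhibit, for the random point set $\xdn$, an explicit feasible solution of the Held--Karp LP whose cost is, with probability $1$ for all large $n$, at most $(\btsp^d-c)n^{\frac{d-1}{d}}$ for some absolute constant $c>0$; dividing by $n^{\frac{d-1}{d}}$ and letting $n\to\infty$ (using that $\hk(\xdn)/n^{\frac{d-1}{d}}\to\bhk^d$ a.s.) then yields $\bhk^d\leq\btsp^d-c<\btsp^d$. The feasible solutions we use have a very simple shape. Suppose (say $n$ is even; the general case is routine) that $G$ is a $3$-regular, $3$-edge-connected spanning subgraph of the complete graph on $\xdn$, and set $x_e=\tfrac23$ for $e\in E(G)$ and $x_e=0$ otherwise. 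Then each vertex has $x$-degree $\tfrac23\cdot 3=2$, and $0\leq x_e\leq 1$; moreover, for every $\empt\neq S\sbsn V$, $3$-edge-connectivity gives $|\delta_G(S)|\geq 3$, so from $3|S|=2|E(G[S])|+|\delta_G(S)|$ we obtain $|E(G[S])|\leq\tfrac32(|S|-1)$ and hence $\sum_{\{i,j\}\sbs S}x_{\{ij\}}=\tfrac23|E(G[S])|\leq|S|-1$. Thus $x$ is Held--Karp feasible, and so $\hk(\xdn)\leq\tfrac23\,\dee(\xdn)$, where $\dee(\xdn)$ denotes the minimum total Euclidean length of a $3$-regular, $3$-edge-connected spanning graph on $\xdn$.

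Hence it suffices to prove that $\dee(\xdn)\leq(\btsp^d+\bmm^d+o(1))n^{\frac{d-1}{d}}$ a.s.: combined with the previous paragraph this gives $\hk(\xdn)\leq\tfrac23(\btsp^d+\bmm^d+o(1))n^{\frac{d-1}{d}}$, and $\tfrac23(\btsp^d+\bmm^d)<\btsp^d$ is \emph{equivalent} to $2\bmm^d<\btsp^d$, which is Theorem~\ref{t.mmnottsp}. The natural construction of a short cubic $3$-edge-connected graph is to take a near-optimal tour $\Pi$ on $\xdn$ (which is $2$-regular, $2$-edge-connected, and has length $(1+o(1))\btsp^d n^{\frac{d-1}{d}}$) and to add a perfect matching $M$, edge-disjoint from $\Pi$, of length $(1+o(1))\bmm^d n^{\frac{d-1}{d}}$; this produces a connected $3$-regular graph $\Pi\cup M$ of the desired total length.

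The main obstacle is that $\Pi\cup M$ need not be $3$-edge-connected, so the bulk of the work is a careful augmentation argument. Since $\Pi$ alone crosses every cut at least twice, the only cuts $\delta(S)$ of size $2$ in $\Pi\cup M$ are those for which $S$ is an interval of $\Pi$ across which $M$ fails to escape --- i.e. $M$ restricts to a perfect matching of $S$ ---; call these the \emph{bad arcs}. The plan is to choose $\Pi$ and $M$ so that bad arcs are rare, and then destroy each one by a local re-pairing of $M$ near one of its endpoints (rerouting two matching edges so that one of them crosses the arc), at an additional cost of $O(n^{-1/d})$ per bad arc. The delicate point will be to show that the matching can be taken ``in general position'' with respect to the tour --- so that the density of bad arcs, times the $O(n^{-1/d})$ per-arc repair cost, fits inside the slack $\tfrac12(\btsp^d-2\bmm^d)>0$ afforded by Theorem~\ref{t.mmnottsp} --- or otherwise to bound the total augmentation cost by $o(n^{\frac{d-1}{d}})$. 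I expect this quantitative control of the connectivity augmentation to be the hardest step, and it is also the place where the argument genuinely leans on the separation $2\bmm^d<\btsp^d$ rather than on a purely local geometric construction. Granting it, we obtain $\dee(\xdn)\leq(\btsp^d+\bmm^d+o(1))n^{\frac{d-1}{d}}$ a.s., and the theorem follows.
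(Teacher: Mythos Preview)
Your feasibility check for $x_e=\tfrac23$ on a $3$-regular $3$-edge-connected graph is correct, and the reduction of $\bhk^d<\btsp^d$ to the bound $\dee(\xdn)\leq(\btsp^d+\bmm^d+o(1))n^{(d-1)/d}$ is clean. But the step you yourself flag as hardest is a genuine gap, and there is no evident way to close it along the lines you sketch. The difficulty is twofold. First, nothing prevents the optimal matching $M$ from producing $\Theta(n)$ bad arcs: matched pairs in $M$ are geometrically close, and in an optimal Euclidean tour geometrically close points are frequently close along the tour as well, so short tour-intervals perfectly matched by $M$ are entirely to be expected. At $\Theta(n^{-1/d})$ per repair the total augmentation cost is then $\Theta(n^{(d-1)/d})$ with a constant you have no handle on; you would need it below $\tfrac12(\btsp^d-2\bmm^d)$, which is itself a separation constant about which nothing quantitative is known. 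Second, and more seriously, a repair of the type you describe --- replacing $\{u,u'\},\{w,w'\}$ by $\{u,w\},\{u',w'\}$ --- can \emph{create} new bad arcs: any tour-interval containing $u',w'$ but neither $u$ nor $w$ has just lost two crossing matching edges. So the process can cascade, and you have neither a termination argument nor a total-cost bound. Your hope that $M$ can be chosen ``in general position'' relative to $\Pi$ is really a structural statement about the joint behaviour of the optimal tour and optimal matching on $\xdn$, and nothing you invoke supplies it.

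The paper's proof avoids all of this by working purely locally. It exhibits a fixed finite configuration $S^k$ (points spaced around two concentric circles, together with two ``gap'' vertices between them) with the property that any optimal tour through an $(\e,D)$-copy of $S^k$ must pay a definite extra constant to reach both gap vertices, whereas inside each such copy one can replace the tour by a fractional solution using weight-$\tfrac12$ triangles that saves this constant and is checked directly to remain Held--Karp feasible (every cut carries $x$-weight $\geq 2$). Since $\xdn$ contains linearly many disjoint $(\e,D)$-copies, the savings aggregate to a constant times $n^{(d-1)/d}$. For $d>2$ one arranges many planar copies of $S^k$ tangent to a large $(d-1)$-sphere to force some copy to be entered only along nearly in-plane directions, so that the planar cost analysis still applies. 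No global control of the tour or any matching is required.
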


Our separation results have implications for the practical problem of solving the Euclidean TSP.  \emph{Branch and bound} algorithms are a standard approach to solving NP-hard problems, in which a bounding estimate is used to prune an exhaustive search of the solution space.  
There has been a great deal of success solving real-world instances of the TSP with branch-and-bound augmented with sophisticated techniques based on cutting planes for the TSP polytope (see, for example Applegate, Bixby, Chv\'atal and Cook \cite{ABCC}). 

Our results show, however, show that several natural approximators (2-factor, twice a matching, the Held-Karp bound, etc) are asymptotically distinct from the TSP.  This will be algorithmically relevant as follows: 
\begin{theorem}\label{bbth}
Suppose that we use branch and bound to solve the TSP on $\xdn$, using $\tf_g$ or $\hk$ as a lower bound. Then, w.h.p, the algorithm runs in time $e^{\Omega(n/\log^6 n)}$.
\end{theorem}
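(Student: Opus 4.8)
The plan is to exploit the asymptotic separation already established in Theorems~\ref{t.2flimit} (or rather its quantitative version together with Theorem~\ref{t.girthseq}) and~\ref{t.hk}: for a fixed girth bound $g$ we have $\btfg^d = \btsp^d - \delta_g$ for some $\delta_g>0$, and $\bhk^d = \btsp^d - \delta_{\hk}$ for some $\delta_{\hk}>0$. Thus w.h.p.\ the lower bound supplied to the branch-and-bound procedure at the root undershoots the true optimum by $\Omega(n^{(d-1)/d})$. The heart of the argument is to show that this macroscopic gap persists after any \emph{bounded} number of branching steps — and more precisely, that to close the gap one must branch on $\Omega(n/\log^6 n)$ variables, so that the search tree has exponential size. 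The essential mechanism is locality: fixing the status (in/out of the tour) of a single edge $x_{\{ij\}}$, or even of all edges incident to a single vertex, changes the optimal value of $\tf_g$ or $\hk$ on the residual instance by only $O(1)$ (in fact by an amount controlled by the diameter of the local point cloud), because these functionals are ``smooth'' under local perturbations. Hence after fixing $o(n^{(d-1)/d})$ coordinates the lower bound is still bounded away from $\tsp(\xdn)$ by $\Omega(n^{(d-1)/d})$, so no leaf of the search tree at that depth can be pruned.

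Concretely I would proceed as follows. First, recall the branch-and-bound framework: a node of the search tree is specified by a partial assignment fixing some edge variables to $0$ or $1$; the node is pruned if the lower bound (computed by solving the $\tf_g$ or $\hk$ subproblem with these edges forced) meets or exceeds the value of the best tour found so far, which is at least $\tsp(\xdn) \sim \btsp^d n^{(d-1)/d}$. Second, establish a \textbf{stability lemma}: if $F$ denotes $\tf_g$ or $\hk$, and $F(\xdn \mid A)$ denotes its value subject to a partial edge-assignment $A$ touching a set $W$ of at most $w$ vertices, then $F(\xdn\mid A) \le F(\xdn) + C\cdot(\text{something like } w^{1/d} + w\cdot\Delta)$, where $\Delta$ is a bound on the longest forced edge. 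Using the standard fact that w.h.p.\ all near-neighbor distances — and hence all edges one would ever force in a sensible search, and in any case all edges of length below a suitable threshold — are $O((\log n / n)^{1/d})$, and partitioning the cube into subcubes to localize the effect of the forced edges à la the subadditivity proofs of Section~\ref{s.EF} and Steele's framework, one gets $F(\xdn\mid A) \le F(\xdn) + o(n^{(d-1)/d})$ provided $w = o(n/\log^{O(1)} n)$; the exponent $6$ is where one matches the polylogarithmic slack in the near-neighbor bound against the $n^{-1/d}$ volume-scaling. Third, combine: for any node at depth $w = o(n/\log^6 n)$ in the search tree,
\[
F(\xdn\mid A) \;\le\; F(\xdn) + o(n^{(d-1)/d}) \;=\; \big(\btfg^d+o(1)\big)n^{(d-1)/d} \;<\; \big(\btsp^d-\tfrac{\delta}{2}\big)n^{(d-1)/d} \;\le\; \tsp(\xdn),
\]
so the node cannot be pruned, and therefore the search tree must contain a subtree that is complete to depth $\Omega(n/\log^6 n)$; since each internal node has at least two children the tree has at least $2^{\Omega(n/\log^6 n)} = e^{\Omega(n/\log^6 n)}$ nodes, and the algorithm inspects each, giving the claimed running-time lower bound.

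The main obstacle is the stability lemma — making precise the claim that forcing $w$ edge variables perturbs the optimum of $\tf_g$ or of the Held-Karp LP by only $o(n^{(d-1)/d})$ when $w = o(n/\log^6 n)$. For $\tf_g$ this is essentially a combinatorial patching argument: take an optimal 2-factor of $\xdn$, surgically modify it near the $\le w$ affected vertices to respect the forced in/out edges while keeping all cycle lengths $\ge g$, and bound the cost of the surgery by the total length of edges in small neighborhoods of those vertices, which is $O(w\cdot(\log n/n)^{1/d}) = o(n^{(d-1)/d})$; one must be slightly careful that repairing the girth constraint does not cascade, but merging a short cycle into a neighbor costs only a bounded number of local reroutings per affected vertex. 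For $\hk$ the analogous step is to take an optimal (or near-optimal) LP solution, and either round/perturb it locally or — more robustly — use the fact that the Held-Karp value is sandwiched between $\tfrac{2}{3}\tsp$ and $\tsp$ and is itself a subadditive Euclidean functional, so the same subcube-localization argument used to prove $\hk(\xdn)\sim\bhk n^{(d-1)/d}$ shows that restricting attention to, or forcing edges within, a small portion of the points changes its value by a controlled amount. The rest of the proof — the high-probability near-neighbor bound, the geometry of branch-and-bound, and the bookkeeping of the exponent — is routine given this lemma.
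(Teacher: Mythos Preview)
Your stability lemma and the use of the separation $\b_{\lb}<\btsp$ are on the right track, and indeed the paper proves essentially the same thing as your ``stability lemma'' (their Lemma~\ref{lbbound}).  But your final step --- ``therefore the search tree must contain a subtree that is complete to depth $\Omega(n/\log^6 n)$; since each internal node has at least two children the tree has at least $2^{\Omega(n/\log^6 n)}$ nodes'' --- is where the argument breaks down, for two related reasons.

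First, the theorem is stated for an \emph{arbitrary} branching strategy, so there is no relationship between tree depth and the number $|I_v|+|O_v|$ of fixed variables, nor any guaranteed lower bound on the branching factor.  Second, and more fundamentally, your stability lemma cannot hold for \emph{every} node with few constraints: if $I_v$ contains even a single edge of length $\Theta(1)$ then $\lb(\xdn\mid I_v,O_v)$ jumps by $\Theta(1)$ and the node is immediately prunable.  So you cannot conclude that \emph{all} shallow nodes survive pruning, only that \emph{some} do, and ``some'' gives a path, not an exponential tree.  Relatedly, even if every leaf of the pruned tree had $|I_v|+|O_v|=\Omega(n/\log^6 n)$, you have no mechanism to convert this into a count of leaves: a large exclusion set $O_v$ need not significantly restrict $|\La_v|$, so the naive pigeonhole over all tours does not work.  (The paper says this explicitly.)

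The missing idea is a \emph{counting argument over a structured family of tours}.  The paper partitions $[0,1]^d$ into $s\sim n/\log n$ boxes, designates interface points $x_j^1,\dots,x_j^4$ in each box, and lets $\bar\La$ be the set of tours that thread the boxes in a fixed order through these interface points but are otherwise arbitrary inside each box; thus $|\bar\La|$ is a product of factorials of size $\sim(\log n)!^{\,n/\log n}$.  The point of this construction is twofold.  For any leaf $v$ with $\bar\La_v:=\bar\La\cap\La_v\neq\varnothing$, the constraints $I_v,O_v$ are forced to be ``local'' (only short edges, box-by-box), so the stability lemma \emph{does} apply and gives $|I_v|+|O_v|=\Omega(n/\log n)$.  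And because excluded edges now hit only $O(\log^2 n)$ tours per box, one gets a genuine upper bound $|\bar\La_v|\le|\bar\La|\cdot e^{-\Omega(n/\log^6 n)}$.  Then $\bar\La=\bigcup_{v\in\bar L}\bar\La_v$ forces $|\bar L|\ge e^{\Omega(n/\log^6 n)}$.  This tour-counting step is the substantive content you are missing; your tree-depth argument cannot substitute for it under the paper's general branching model.
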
 
In particular, this gives a rigorous explanation for the observation (see \cite{MP}, for example) that branch-and-bound heuristics using the Assignment Problem as a bounding estimate (even weaker than the 2-factor) perform poorly on random Euclidean instances, and indicates that the success of the Held-Karp bound in branch and bound algorithms will be limited for sufficiently large Euclidean point sets.

\begin{remark}
Asymptotic formulas are available for subadditive Euclidean functionals in more general settings.  If $x_1,x_2,\dots \in \Re^d$ are independent identically distributed random variables with bounded support, then the length $L(x_1,\dots,x_n)$ of the functional on the points $\xdn$ satisfies
\[
L(x_1,\dots,x_n)/n^{\frac{d-1}{d}}\to \b_L^d \int_{\Re^d}f(x)^{\frac{d-1}{d}}dx,
\]
where $f$ is the absolutely continuous part of the distribution of the $x_i$'s and $\b_L^d$ is a constant depending only on $d$ and $L$ (see \cite{BHH,S}).  Note that this gives an asymptotic formula for $L(x_1,\dots,x_n)$ unless the right hand side is zero.  The latter case will happen if the $x_i$'s lie exclusively on some $m$-dimensional manifold embedded in $\Re^d$ where $m<d$, but the BHH theorem also has a suitable extension to this setting \cite{manifolds}, allowing asymptotic formulas involving $n^{\frac{m-1}{m}}$.   Our results are all immediately valid in these more general settings, however: as the constants $\b_L^d$ depend only on $L$ and $d$ or $m$ (in particular, not on the distribution or, in the second case, the particular manifold), it is enough study the constants in the case of points which are uniformly distributed in the hypercube.
\end{remark}

\section{Subadditive Euclidean Functionals}
\label{s.EF}
Steele defined a \emph{Euclidean functional} as a real valued function $L$ on finite subsets of $\Re^d$  which is invariant under translation, and scales as $L(\a X)=\a L(X)$.  It is \emph{nearly monotone} with respect to addition of points if
\begin{equation}
L(X\cup Y)\geq L(X)-o(n^{\frac{d-1}{d}})\quad\mbox{for }n=|X|.
\end{equation}
It has finite variance if, fixing $n$, we have
\begin{equation}
\V(L(\xdn))<\infty
\end{equation}
(in particular, if it is bounded for fixed $n$) and it is \emph{subadditive} if, 
for $\ydn$ a random set of $n$ points from $[0,t]^d$, it satisfies 
\[
L(\ydn)\leq \sum_{\a\in [m]^d}L(S_\a\cap \ydn)+Ctm^{d-1}
\]
for some absolute constant $C$, where here $\{S_\a\}$ ($\a\in [m]^d$) is a decomposition of $[0,t]^d$ into $m^d$ subcubes of side length $u=t/m$.

Steele proved:
\begin{theorem}[Steele \cite{S}]
If $L$ is a subadditive Euclidean functional on $\Re^d$ of finite variance, $x_1,x_2,\dots$ is a random sequence of points from $[0,1]^d$, and $\xdn=\{x_1,x_2,\dots,x_n\}$, then there is an absolute constant $\b^d_L$ s.t.
\begin{equation}\label{ratioasym}
L(\xdn)/n^{\frac{d-1}{d}}\to \b^d_L\qquad a.s.
\end{equation}
In particular, we have a.s. that either $L(\xdn)=o(n^{\frac{d-1}{d}})$ (if $\b_L^d=0$) or that
\begin{equation}\label{asym}
L(\xdn)\sim \b^d_L n^{\frac{d-1}{d}}.
\end{equation}
\end{theorem}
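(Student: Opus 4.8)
The plan is to split the argument into two stages: first establish that the \emph{mean} $\f(n):=\E L(\xdn)$ satisfies $\f(n)\sim \b^d_L n^{\frac{d-1}{d}}$ for some constant $\b^d_L\ge 0$, and then upgrade this to the a.s.\ statement through a concentration estimate. Throughout, the one-sidedness of the hypotheses (subadditivity only bounds from above, near-monotonicity only gives an $o(n^{\frac{d-1}{d}})$ correction in the reverse direction, finite variance is only assumed for each fixed $n$) means the two directions must be handled somewhat differently.

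For the first stage I would rescale and iterate the subadditivity inequality. By translation invariance and the scaling relation, if $\ydn$ is uniform in $[0,t]^d$ then $L(\ydn)\stackrel{d}{=}t\,L(\xdn)$; applying subadditivity to a grid of $m^d$ subcubes of side $t/m$ and taking expectations, while conditioning on the (multinomial) counts $N_\a$ of points landing in each subcube, yields an inequality of the shape
\[
\f(n)\;\le\; m^{d-1}\,\E\big[\f(N)\big]+Cm^{d-1},\qquad N\sim\Bin(n,m^{-d}).
\]
Used first with $m\approx n^{1/d}$ (the finite-variance hypothesis guarantees $\f(n)<\infty$ for every fixed $n$, which handles the $O(1)$-sized base configurations) this gives the a priori bound $\f(n)=O(n^{\frac{d-1}{d}})$. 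Feeding that back in and now choosing $m$ with $m^d\to\infty$ but $m^d=o(n)$ makes $N$ concentrate around $n/m^d$, so that, writing $\psi(n)=\f(n)/n^{\frac{d-1}{d}}$, the display becomes $\psi(n)\le(1+o(1))\psi(\flr{n/m^d})+o(1)$; a Fekete/de~Bruijn--Erd\H os-type lemma allowing lower-order error terms then forces $\psi(n)$ to converge to some $\b^d_L\in[0,\infty)$. The matching lower bound (so that the $\limsup$ is a genuine limit, with $\b^d_L>0$ reflected in the mean) comes from near-monotonicity: deleting the points lying within distance $\approx t/m$ of a subcube boundary changes $L$ by only $o(n^{\frac{d-1}{d}})$, which converts the subcube decomposition into an approximate \emph{superadditivity} in the reverse direction.

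For the second stage the goal is $\V(L(\xdn))=o\big(n^{2(d-1)/d}\big)$, with enough room for Borel--Cantelli. Partition $[0,1]^d$ into $m^d$ subcubes with $m\to\infty$ slowly; subadditivity bounds $L(\xdn)$ above by $\sum_\a L(S_\a\cap\xdn)+Cm^{d-1}$, and the boundary-deletion argument bounds it below by the same sum minus $o(n^{\frac{d-1}{d}})$, so that $L(\xdn)$ equals $\sum_\a L(S_\a\cap\xdn)$ up to a lower-order, controllable error. Conditioned on the counts $(N_\a)$ the summands are independent, each of size $\approx m^{-1}(n/m^d)^{\frac{d-1}{d}}$, so the conditional variance is $O\big(m^{d-2}(n/m^d)^{2(d-1)/d}\big)=O(n^{2(d-1)/d}m^{-d})$, while the variance contributed by the fluctuation of the (negatively correlated) multinomial counts is an even lower-order $O(n^{1-2/d})$; together with the boundary error this gives $\V(L(\xdn))=o(n^{2(d-1)/d})$ (a little bootstrapping is needed: one first obtains the cruder $\V(L(\xdn))=O(n^{2(d-1)/d})$ and then improves it via the decomposition). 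Finally, Chebyshev along a geometric subsequence $n_k=\flr{\r^k}$ plus Borel--Cantelli gives $L(\cX_{n_k})/n_k^{\frac{d-1}{d}}\to\b^d_L$ a.s., and near-monotonicity (in one direction) together with the fact that adjoining a bounded number of points raises $L$ by at most a lower-order amount (in the other) lets one interpolate to all $n$.

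The step I expect to be the real obstacle is making the ``boundary-deletion'' passage between $L(\xdn)$ and $\sum_\a L(S_\a\cap\xdn)$ fully rigorous in the lower-bound direction, uniformly enough to serve both the mean-convergence and the variance estimate: subadditivity is one-sided and near-monotonicity is only an asymptotic statement, so extracting from it a usable approximate superadditivity is the technically delicate heart of the argument (this is essentially the role later played by ``boundary functionals''). The case $d=2$ is the most demanding, since there the crude $\sqrt n$-scale concentration from naive bounded-difference estimates is of the same order as $n^{\frac{d-1}{d}}$, so the variance reduction via fine subcubes is genuinely necessary rather than a convenience.
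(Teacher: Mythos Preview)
The paper does not contain a proof of this theorem: it is stated as a result of Steele \cite{S} and used as a black box. Immediately after the statement the paper simply writes ``This can thus be used to easily give the existence of the simple asymptotic formulas\ldots'' and moves on to verifying that $\tf_g$, $\mst_k$, and $\hf$ satisfy the hypotheses. So there is no in-paper proof to compare your proposal against.

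That said, your sketch is a reasonable outline of one standard route to results of this type (mean convergence via an approximate Fekete argument from subadditivity, then concentration plus a subsequence/interpolation step), and you have correctly identified the genuinely delicate point: extracting an approximate \emph{super}additivity from the one-sided hypotheses, so that the subcube decomposition controls $L(\xdn)$ from below as well as above. In Steele's original framework this is not handled by a boundary-deletion trick as you describe, and in particular the ``near-monotonicity'' hypothesis in the form stated here is not by itself strong enough to give you the uniform lower-bound comparison you want for the variance step; the later boundary-functional machinery (Rhee, Yukich) was developed precisely to make this kind of two-sided control clean. Your variance calculation also leans on having already controlled $\V(L(S_\a\cap\xdn))$ for the subcubes, which is circular unless you set up a genuine induction or bootstrap; the parenthetical ``a little bootstrapping is needed'' is hiding real work. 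If you want to actually carry this out, it would be cleaner either to follow Steele's original argument (which organizes the a.s.\ convergence differently and does not pass through a sharp variance bound) or to assume the stronger smoothness/boundary-functional hypotheses under which the two-sided decomposition is available.
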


This can thus be used to easily give the existence of the simple asymptotic formulas for the functionals $\tf_g(X),$ $\mst_k(X)$, and $\hf(X)$ by showing that these functionals are subadditive.  We note that the constant $\b_L^d$ cannot be zero in any of these cases and thus that \eqref{ratioasym} implies \eqref{asym}, since for any $1\leq i\leq n$,
\[\E(\min\limits_{\substack{x\in \xdn\\x\neq x_i}}\dist(x_i,x))=\Omega(n^{-\frac 1 d}),
\]
 giving a lower bound of $\Omega(n^{\frac{d-1}{d}})$ for each functional by Linearity of Expectation.

\begin{proposition}
$\tf_g(X)$, $\mst_k(X)$, and $\hf(X)$ are subadditive Euclidean functionals.
\end{proposition}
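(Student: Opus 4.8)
The plan is to verify each of the three defining properties of a subadditive Euclidean functional—translation invariance with scaling, near monotonicity, finite variance, and the subadditivity inequality—for each of the functionals $\tf_g$, $\mst_k$, and $\hf$. Translation invariance is immediate since all three are defined purely in terms of Euclidean edge lengths among the points, which are themselves translation invariant, and scaling $L(\a X)=\a L(X)$ follows because scaling the point set by $\a$ scales every pairwise distance, hence every sum of edge lengths, by $\a$. Finite variance will follow from boundedness for fixed $n$: for points in $[0,1]^d$ the relevant structures (a bounded-degree spanning tree, a 2-factor with girth $\geq g$, or an $H$-factor) have $O(n)$ edges each of length $O(\sqrt d)$, so $L(\xdn)=O(n)$ deterministically, provided the structure exists at all. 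Near monotonicity is the place where a small amount of care is needed: one must argue that adding points cannot decrease the optimum by more than $o(n^{(d-1)/d})$, which for these functionals is in fact easy because adding a point typically only forces extra length, and in the worst case one can splice the new points into an existing optimal structure at cost $O(1)$ per point—but since there may be up to $|Y|$ new points, one must observe that in the relevant application $|Y|=o(n)$, or more robustly invoke the standard fact that these functionals are monotone up to lower-order terms. Actually the cleanest route is to note that each functional satisfies the stronger property that it is \emph{superadditive up to a boundary term} in the same sense, or simply to check the simple monotonicity bound directly.

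The heart of the proposition is the subadditivity inequality: given a partition of $[0,t]^d$ into $m^d$ subcubes $S_\a$ of side $u=t/m$, we must show
\[
L(\ydn)\leq \sum_{\a\in[m]^d}L(S_\a\cap \ydn)+Ctm^{d-1}.
\]
The strategy, following the classical template, is to take optimal structures on each subcube $S_\a\cap\ydn$ and stitch them together into a feasible global structure for $\ydn$ whose extra cost is bounded by $O(tm^{d-1})$, i.e.\ proportional to the total surface area of the subcube boundaries. For $\mst_k$: take the optimal degree-$\leq k$ spanning tree in each nonempty subcube, then connect the trees across the grid of subcubes using a ``backbone'' that visits one representative point in each nonempty subcube; the naive way to connect adds degree to representatives, so instead one should route the backbone so that it enters and leaves each subcube through a \emph{newly added Steiner-like detour}—but since we need a spanning tree on $\ydn$ only (no Steiner points), the standard fix is: within each subcube pick a vertex of degree $\leq k-2$ in its local tree if one exists (a leaf, say, which exists since trees have leaves, and $k\geq 2$), and use those as the attachment points; the connecting edges have length $O(u\sqrt d)$ each and there are $O(m^d)$ of them, but we want the bound $O(tm^{d-1})=O(um^d)$, which matches. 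The subtlety for $\mst_k$ is ensuring the degree cap is respected at attachment points; attaching at leaves raises their degree to $2\leq k$, and if the backbone passes through a subcube it may need degree $3$, requiring $k\geq 3$—but for $k=2$ (the TSP/Hamilton-path case) one argues separately as for the TSP itself, or notes $\mst_2$ is essentially the Hamilton-path functional handled in \cite{S}.

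For $\tf_g$: take an optimal 2-factor with all cycles of length $\geq g$ in each nonempty subcube; these are disjoint unions of cycles, so globally we already have a 2-factor of $\ydn$, but we have done nothing across subcubes and paid only $\sum_\a L(S_\a\cap\ydn)$—so in fact $\tf_g$ is \emph{trivially} subadditive with $C=0$, as the union of the per-subcube 2-factors is itself a valid 2-factor of $\ydn$ (each cycle lies in one subcube, hence has length $\geq g$), except for the annoyance that a subcube containing fewer than $g$ points admits no valid 2-factor at all. This boundary issue—subcubes with too few points—is the main obstacle and must be handled for all three functionals: the standard device is to collect all points lying in ``deficient'' subcubes and handle them with a single cheap auxiliary structure (e.g.\ a space-filling-curve tour through all of them, of length $O(t m^{(d-1)/d}\cdot(\text{something}))$, or merge deficient cells with neighbors), absorbing the cost into the $Ctm^{d-1}$ term; one checks that the number of points in deficient cells, and the cost of patching them, is $O(tm^{d-1})$ after possibly enlarging $C$ to depend on $g$, $k$, or $|V(H)|$ (which is permitted since $C$ may be any absolute constant for the fixed functional). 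For $\hf$: similarly take optimal $H$-factors per subcube; leftover points (up to $|V(H)|-1$ per subcube, plus the deficient-cell points) are gathered and given their own $H$-factor, or absorbed, again at cost $O(tm^{d-1})$. I would write the argument once in a unified way, emphasizing that the union-of-local-solutions is already globally feasible for $\tf_g$ and $\hf$ (so the only cost is the leftover/deficient points), while for $\mst_k$ one additionally needs the $O(tm^{d-1})$ connecting edges and the leaf-attachment trick to preserve the degree bound, and I expect the deficient-cell bookkeeping to be the only genuinely fiddly point.
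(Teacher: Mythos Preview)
Your proposal is essentially correct and follows the same approach as the paper: for $\tf_g$ and $\hf$ the union of per-subcube solutions is already globally feasible modulo leftover points in deficient subcubes, and those leftovers are covered at cost $O(tm^{d-1})$ via the worst-case TSP bound of Few/T\'oth (exactly as the paper does, since there are at most $(g-1)m^d$ or $|H|m^d$ such points); for $\mst_k$ one connects the per-subcube trees along a Hamiltonian ordering of the subcubes. One small point where the paper is cleaner than your sketch: rather than a single representative per subcube (which, as you note, may acquire degree $3$ and forces you to treat $k=2$ separately), the paper picks \emph{two} leaves in each subcube's tree, designates one red and one blue, and joins the red leaf of $S_{\a_i}$ to the blue leaf of $S_{\a_{i+1}}$; each leaf then gains exactly one edge and ends at degree $2$, so the construction works uniformly for all $k\geq 2$ with no special case. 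Also, your discussion of near monotonicity is unnecessary here: Steele's theorem as invoked in the paper requires only subadditivity and finite variance of the Euclidean functional, and the paper's proof of the proposition accordingly addresses only subadditivity (plus the implicit worst-case upper bound $Cn^{(d-1)/d}$).
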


Before writing a proof, we note that for the definition of the 2-factor functionals $\tf_g(X)$, we can only require that the 2-factors whose length we minimize cover all the points when there are at least $\max(g,3)$ points.  
Similarly, the $\hf(X)$ functional is required just to cover at least $n-|H|+1$ points.

\begin{proof}
We begin by noting that for each of these functionals, we can assert an upper bound $Cn^{\frac{d-1}{d}}$ for some constant $C$, even over worst-case arrangements of $n$ points in $[0,1]^{d}$.  The analogous statement for the TSP was proved by Toth \cite{Toth} and by Few \cite{Few}, and implies these bounds for the functionals considered here.  Indeed, a tour through $n$ points itself gives a tree of max-degree 2 (after deleting one edge), and is a 2-factor subject to any constant girth restriction. 
For $H$ factors, a tour can be divided into paths of length $|H|$ (except for $<|H|$ remaining vertices) which can then be completed to instances of $H'\supseteq H$ by adding edges. Each added edge has a cost bounded by the length of the path it lies in and so this construction increases the total cost by at most a factor equal to the number of edges in $H$.

Subadditivity of $\tf_g(X)$, and $\hf(X)$ is now a consequence of the fact that a union of 2-factors (subject to restrictions on the cycle length, perhaps) or $H$-factors is again a 2-factor (subject to the same restrictions) or an $H$ factor, respectively.  In particular, the subadditive error term for these functions comes just from the fact that points may be uncovered in some of the subcubes $S_\a$, for the exceptional reasons noted above.  Since there are at most $(g-1)m^d$ or $|H|m^d$ such uncovered points, however, the error is suitably bounded by the minimum cost factor on a worst-case arrangement of the remaining points.

Subadditivity of $\mst_k(X)$ ($k\geq 2$) is similar: after finding minimal spanning trees of max-degree $k$ in each subcube $S_\a$, we must join together these trees into a single tree.  We choose 2 leaves of each subcube's tree and denote one red and the other blue.  We let $\a_1,\a_2,\dots,\a_{m^d}$ denote a path through the decomposition $\{S_\a\}$, so that the subcubes $S_{\a_i}$ and $S_{\a_{i+1}}$ are adjacent.  For each $i<m^d$, we join the red leaf of the tree in $S_{\a_i}$ to the blue leaf in the tree of $S_{\a_i+1}$.  The result is a spanning tree of the whole set of points with the same maximum degree and with extra cost at most $2\sqrt d u m^{d}=2\sqrt d t m^{d-1}$.
\end{proof}
\section{Separating asymptotic constants}
In the following we will use the simplest application of the Azuma-Hoeffding martingale tail inequality: It is often referred to as McDiarmid's inequality \cite{McD}. Suppose that we have a random variable $Z=Z(X_1,X_2,\ldots,X_N)$ where $X_1,X_2,\ldots,X_N$ are independent. Further, suppose that changing one $X_i$ can only change $Z$ by at most $c$ in absolute value. Then for any $t>0$,
\beq{mcd}
\Pr(|Z-\E Z|\geq t)\leq 2\exp\set{-\frac{t^2}{c^2N}}.
\eeq
We will also use the following inequality, applicable under the same conditions, when $\E Z$ is not large enough.
\beq{mcd1}
\Pr(Z\geq \a\E Z)\leq \bfrac{e}{\a}^{\a\E Z/c}.
\eeq
Our method to distinguish constants is based on achieving constant factor improvements to the values of functions via local changes.  Given $\e,D\in \Re$ and a finite set of points $S\subseteq \Re^d$ and a universe $X$, we say that $T\subseteq X$ is an $(\e,D)$-copy of $S$ if there is a bijection $f$ between $T$ and a point set $S'$ congruent to $S$ such that $\de {x}{f(x)}<\e$ for all $x\in T$, and such that $T$ is at distance $>D$ from $X\stm T$. Here we will further assume that $||x-y||>\e$ for $x\neq y\in S$.

For our purposes, it will be convenient notationally to work with $n$ random points $\ydn$ from $[0,t]^d$ where $t=n^{1/d}$, in place of $n$ random points $\xdn$ from $[0,1]^d$. At the end, we will scale our results by a factor $n^{-1/d}$ in order to get what is claimed above.
\begin{observation}\label{o.findit}
Given any finite point set $S$, any $\e>0$, and any $D$, $\ydn$ a.s contains at least $C^S_{\e,D}n$ $(\e,D)$-copies of $S$, for some constant $C^S_{\e,D}>0$.
\end{observation}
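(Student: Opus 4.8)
The plan is to produce the copies of $S$ inside a fixed family of disjoint cubes of constant size, one copy for each cube that turns out to be ``favorable,'' and then to show by concentration that a positive fraction of the cubes are favorable a.s. Fix the constant $\ell := 2(\diam S+\e)+2D+1$ and partition $[0,t]^d$ into $N:=\flr{t/\ell}^d$ pairwise disjoint axis-parallel cubes $Q_1,\dots,Q_N$ of side length $\ell$ (discarding the leftover boundary region). Since $t=n^{1/d}\to\infty$, we have $N\geq \tfrac12 (t/\ell)^d=\tfrac{n}{2\ell^d}$ once $n$ is large. In each $Q_i$, fix once and for all a congruent copy $S^{*(i)}=\{s^{(i)}_1,\dots,s^{(i)}_k\}$ of $S$ (with $k:=|S|$), obtained by translating one fixed copy $S^*\sbs[0,\ell]^d$ of $S$ so that $S^{*(i)}$ lies inside the ball of radius $\diam S$ about the centre $c_i$ of $Q_i$.

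I would then define the favorable event $E_i$ to be: $Q_i$ contains exactly $k$ points of $\ydn$, and these $k$ points admit a bijection $f$ onto $S^{*(i)}$ with $\de{x}{f(x)}<\e/2$ for each of them. Two things then need checking. First, $E_i$ is \emph{local}: it depends only on the restriction of $\ydn$ to $Q_i$. Second, $E_i$ \emph{certifies an $(\e,D)$-copy}: if $E_i$ holds and $T_i$ denotes the $k$ points of $\ydn$ in $Q_i$, then $\de{x}{f(x)}<\e/2<\e$ for the bijection $f\colon T_i\to S^{*(i)}$, and since every point of $T_i$ lies within $\diam S+\e/2$ of $c_i$ while every point of $\ydn\stm T_i$ lies outside $Q_i$ — hence at distance at least $\ell/2-(\diam S+\e/2)>D$ from all of $T_i$ — the set $T_i$ is an $(\e,D)$-copy of $S$. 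Consequently the number $Z$ of indices $i$ with $E_i$ holding is a lower bound on the number of (in fact pairwise disjoint) $(\e,D)$-copies of $S$ contained in $\ydn$, so it suffices to show $Z\geq C^S_{\e,D}\,n$ a.s.

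To estimate $\E Z=\sum_i \Pr(E_i)$, note first that by translation invariance $\Pr(E_i)$ is the same for every $i$. The number of points of $\ydn$ in $Q_i$ is $\Bin(n,\ell^d/n)$, so $\Pr(\text{exactly }k)\to e^{-\ell^d}\ell^{dk}/k!>0$; and conditioned on exactly $k$ points landing in $Q_i$, those points are i.i.d.\ uniform on $Q_i$, so — using the hypothesis that $\|x-y\|>\e$ for distinct $x,y\in S$, which makes the balls $B(s^{(i)}_j,\e/2)$ pairwise disjoint and contained in $Q_i$ — the conditional probability that the $j$-th of them lies in $B(s^{(i)}_j,\e/2)$ for every $j$ equals $(\omega_d (\e/2)^d/\ell^d)^k$, where $\omega_d$ is the volume of the unit ball, and this event is contained in $E_i$. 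Hence $\Pr(E_i)\geq p$ for some constant $p=p(S,\e,D,d)>0$ and all large $n$, giving $\E Z\geq pN\geq \tfrac{p}{2\ell^d}\,n$.

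Finally, for concentration: regarding $Z=Z(x_1,\dots,x_n)$ as a function of the independent point locations, moving a single $x_j$ alters the point content of at most two of the cubes $Q_i$, so changes $Z$ by at most $2$. McDiarmid's inequality \eqref{mcd} then yields $\Pr\!\big(Z<\tfrac12\E Z\big)\leq 2\exp\!\big(-(\E Z)^2/(16n)\big)\leq 2\exp(-\Omega(n))$, and since these probabilities are summable in $n$, Borel--Cantelli gives that a.s.\ $Z\geq \tfrac12\E Z\geq \tfrac{p}{4\ell^d}\,n$ for all large $n$; take $C^S_{\e,D}:=\tfrac{p}{4\ell^d}$. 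The one place that genuinely needs care is the design of $E_i$: it must be local (so the bounded-differences constant is just $2$ and $\Pr(E_i)$ is easy to bound below) while still forcing the global distance-$D$ isolation of the copy — and this is exactly what is bought by taking the side length $\ell$ to be the large constant above, so that the ``moat'' separating $T_i$ from the rest of $\ydn$ sits entirely inside $Q_i$.
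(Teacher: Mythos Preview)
Your proposal is correct and follows essentially the same approach as the paper's proof: partition $[0,t]^d$ into constant-side cubes large enough that a favorable configuration inside a cube is automatically $D$-isolated, lower-bound the probability of the favorable event in each cube by a positive constant, and apply McDiarmid's inequality with bounded-difference constant~$2$ to conclude $Z\geq \tfrac12\E Z$ a.s. Your write-up is in fact more careful with the geometry (the explicit choice of $\ell$ and the verification that the ``moat'' has width exceeding $D$) and with the probability computation than the paper's sketch, but the structure is the same.
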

\begin{proof}[Proof of Observation \ref{o.findit}]
Let $Z$ denote the number of $(\e,D)$-copies of $S$ in $\ydn$. We divide $[0,t]^d$ into $n/(3D)^d$ subcubes $C_1,C_2,\ldots,$ of side $3D$. Then let $C_i'\subseteq C_i$ be a centrally placed subcube of side $D$. Now choose a set $S'$ congruent to $S$ somewhere inside $C_1'$ and let $B_1,B_2,\ldots,B_s,\,s=|S|$ be the collection of balls of radius $\e$, centered at each point of $S'$. The with probability at least $\a=\a_{\e,D}>0$, each $B_i$ contains exactly one point of $\ydn$ and there are no other points of $\ydn$ in $C_1$. Thus $\E Z\geq \b n$ where $\b=\a/(3D)^d$. Now changing the position of one point in $\ydn$ changes the number of $(\e,D)$-copies of $S$ by at most two and so we can use McDiarmid's inequality \cite{McD} to show that $Z\geq \frac12\E Z$ a.s. 
\end{proof}

To use this to prove Theorem \ref{t.posdeg}, we will need just a bit  more.
\begin{observation}\label{o.convshield}
If $Y\sbs \Re^d$ and $x$ lies in the interior of the convex hull of $Y$, then when $D$ is sufficiently large, any point at distance $>D$ is closer to some point of $Y$ than to $x$.\qed
\end{observation}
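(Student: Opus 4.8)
The plan is to reduce to a finite subset of $Y$ and then conclude with an elementary inner-product estimate. First I would use the hypothesis that $x$ lies in the interior of $\mathrm{conv}(Y)$: by a standard Carath\'eodory-type compactness argument there is a \emph{finite} subset $Y'\sbs Y$ with $x$ still in the interior of $\mathrm{conv}(Y')$. Since being closer to some point of the smaller set $Y'$ is stronger than being closer to some point of $Y$, we may assume without loss of generality that $Y$ is finite. Fix $\rho>0$ with $B(x,\rho)\sbs \mathrm{conv}(Y)$ and set $M=\max_{y\in Y}\de y x$ (finite, since $Y$ is now finite).

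The key step is the claim that for every unit vector $u$ there exists $y\in Y$ with $\langle y-x,u\rangle\geq \rho$. Indeed, $x+\rho u\in B(x,\rho)\sbs\mathrm{conv}(Y)$, so we may write $x+\rho u=\sum_i\lambda_i y_i$ with $\lambda_i\geq 0$, $\sum_i\lambda_i=1$, and $y_i\in Y$; taking the inner product of both sides with $u$ gives $\rho=\sum_i\lambda_i\langle y_i-x,u\rangle$, whence at least one summand satisfies $\langle y_i-x,u\rangle\geq\rho$.

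Now take any point $z$ with $\de z x>D$, and apply the previous claim to $u=(z-x)/\de z x$ to obtain $y\in Y$ with $\langle z-x,\,y-x\rangle\geq \rho\,\de z x$. Then
\[
\de z y^2=\de z x^2-2\langle z-x,\,y-x\rangle+\de y x^2\leq \de z x^2-2\rho\,\de z x+M^2,
\]
and the right-hand side is strictly less than $\de z x^2$ as soon as $\de z x>M^2/(2\rho)$. Hence the observation holds with $D=M^2/(2\rho)$ (or any larger value). I do not expect a genuine obstacle here; the only point deserving a line of care is the reduction to a finite $Y$, which is what guarantees that $M$ is finite and the maximum is attained.
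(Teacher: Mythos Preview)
Your argument is correct. The paper itself offers no proof of this observation at all---it is stated with a bare \qed\ and treated as self-evident---so there is nothing to compare against beyond noting that your inner-product estimate is exactly the kind of one-line computation the authors presumably had in mind. Your explicit threshold $D=M^2/(2\rho)$ is a nice bonus. One minor remark: in every application the paper makes of this observation the set $Y$ is already finite (it is the configuration $\S{k}$ or the set $U$), so the Carath\'eodory reduction, while correct, is not strictly needed for the paper's purposes.
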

 If $v_0,v_1,\dots,v_k$ are vectors in $\Re^d$ with pairwise negative dot-product, then $v_1,\dots,v_k$ lie in the half-space $v_0\cdot x<0$, and the projections of $v_1,\dots,v_k$ onto the hyperplane $v_0\cdot x=0$ have pairwise negative dot-products.  This gives the following, by induction on $d$:
\begin{observation}\label{o.dotproduct2}
If $v_1,\dots,v_{d+1}\in \Re^d$ are vectors with negative pairwise dot-products, then $0$ is a positive linear combination of the $v_i$'s.\qed 
\end{observation}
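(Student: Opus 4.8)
The plan is to prove the observation by induction on $d$, letting the displayed projection fact stated just above it carry the inductive step. For the base case $d=1$ we have two reals $v_1,v_2$ with $v_1v_2<0$, hence nonzero of opposite sign, and $|v_2|\,v_1+|v_1|\,v_2=0$ exhibits $0$ as a positive combination. (The projection fact itself, which I will simply invoke, is the one-line computation $w_i\cdot w_j=v_i\cdot v_j-(v_0\cdot v_i)(v_0\cdot v_j)/|v_0|^2<0$ for the orthogonal projections $w_i$ onto $v_0\cdot x=0$, together with $v_0\cdot v_i<0$.)

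For the inductive step, assume the statement in dimension $d-1$ and let $v_1,\dots,v_{d+1}\in\Re^d$ have pairwise negative dot products; note first that each $v_i\neq 0$, since e.g. $v_i\cdot v_{d+1}<0$, so in particular $v_{d+1}\neq 0$. Apply the displayed fact with $v_0=v_{d+1}$: the orthogonal projections $w_1,\dots,w_d$ of $v_1,\dots,v_d$ onto the hyperplane $H=\{x:v_{d+1}\cdot x=0\}$ again have pairwise negative dot products. Identifying $H$ with $\Re^{d-1}$, these are $d=(d-1)+1$ vectors satisfying the hypothesis in dimension $d-1$, so the inductive hypothesis gives $c_1,\dots,c_d>0$ with $\sum_{i=1}^d c_iw_i=0$. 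Since $w_i$ is the component of $v_i$ orthogonal to $v_{d+1}$, the vector $\sum_{i=1}^d c_iv_i$ is orthogonal to $H$, hence equals $\lambda v_{d+1}$ for some scalar $\lambda$. Dotting $\sum_{i=1}^d c_iv_i=\lambda v_{d+1}$ with $v_{d+1}$ yields $\sum_{i=1}^d c_i(v_i\cdot v_{d+1})=\lambda|v_{d+1}|^2$, whose left side is strictly negative and whose factor $|v_{d+1}|^2$ is positive, forcing $\lambda<0$. Then $\sum_{i=1}^d c_iv_i+(-\lambda)v_{d+1}=0$ with all coefficients positive, completing the induction.

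The argument is essentially mechanical once the projection fact is in hand, because that fact reduces the dimension while preserving the hypothesis exactly, so the recursion is immediate. The one point that needs a moment's care — and the closest thing to an obstacle — is verifying that $\lambda$ is not merely nonzero but strictly negative: this is exactly what makes $v_{d+1}$ appear in the final combination with a positive coefficient, and it is pinned down by the sign computation above (dotting against $v_{d+1}$).
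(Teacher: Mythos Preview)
Your proof is correct and follows exactly the approach the paper indicates: induction on $d$, with the projection fact carrying the reduction to dimension $d-1$. The paper merely writes ``This gives the following, by induction on $d$'' and leaves the details implicit; you have supplied them faithfully, including the sign check $\lambda<0$ that ensures $v_{d+1}$ enters with positive coefficient.
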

This allows us to prove:
\begin{lemma}\label{l.config}
If $d+1\leq k\leq \tau'(d)$, then there exists a set of points $\bS{k}\sbs \Re^d$ consisting of a single point at the origin, surrounded by a set $\S{k}$ of $k$ points on the unit sphere centered at the origin and separated pairwise by at least some $\e'>0$ more than unit distance, such that $\S{k}$ does not lie in open half-space whose boundary passes through the origin.
\end{lemma}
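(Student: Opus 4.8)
The plan is to construct the configuration first for the extreme value $k=\tau'(d)$, directly from the definition of the strict kissing number, and then to obtain every smaller $k$ in the range $d+1\le k<\tau'(d)$ by passing to an appropriate subset via Carath\'eodory's theorem. Throughout I will use the elementary fact that, for a finite set $P\subset\Re^d$, the set $P$ lies in \emph{no} open half-space whose boundary passes through the origin if and only if $0\in\mathrm{conv}(P)$: if $0=\sum_i\lambda_i p_i$ with $\lambda_i\ge 0$ and $\sum_i\lambda_i=1$ then no unit vector $v$ can satisfy $v\cdot p_i>0$ for all $i$, and conversely, if $0\notin\mathrm{conv}(P)$ the separating hyperplane theorem applied to the compact convex set $\mathrm{conv}(P)$ and the point $0$ produces exactly such a half-space. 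Thus it suffices, for each admissible $k$, to produce $k$ points of the unit sphere $S^{d-1}$, pairwise separated by at least some fixed $\e'>0$ beyond unit distance, whose convex hull contains $0$.

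First I would rephrase the definition of $\tau'(d)$: after rescaling so the common unit sphere is centred at the origin, the centres of the $K$ outer spheres lie at distance $2+\e$ from the origin and pairwise more than $2+2\e$ apart, so $\tau'(d)$ is equally the largest $K$ for which there exist a fixed $\delta>0$ and $K$ points of $S^{d-1}$ with pairwise distances $>1+\delta$. Writing $m_K$ for the largest minimum pairwise distance realizable by $K$ points of $S^{d-1}$ --- a maximum, attained by compactness of $(S^{d-1})^K$ --- and noting $m_K$ is non-increasing in $K$, this reformulation reads $m_{\tau'(d)}>1$ and $m_{\tau'(d)+1}\le 1$. Fix a set $Q=\{q_1,\dots,q_{\tau'(d)}\}\subseteq S^{d-1}$ realizing $m_{\tau'(d)}$ and put $\e_0:=m_{\tau'(d)}-1>0$, so that $\|q_i-q_j\|\ge 1+\e_0$ whenever $i\ne j$.

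The crux is the claim $0\in\mathrm{conv}(Q)$. Suppose not; by the first paragraph $Q$ lies in an open half-space $\{x:v\cdot x>0\}$ for some unit vector $v$. The antipodal point $-v\in S^{d-1}$ is distinct from every $q_i$ (because $v\cdot(-v)=-1<0<v\cdot q_i$) and satisfies $\|(-v)-q_i\|^2=\|v+q_i\|^2=2+2\,v\cdot q_i>2$ for each $i$. Hence $Q\cup\{-v\}$ is a set of $\tau'(d)+1$ points of $S^{d-1}$ whose pairwise distances are all at least $\min(1+\e_0,\sqrt 2)>1$, which forces $m_{\tau'(d)+1}>1$ and contradicts the preceding paragraph. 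So $0\in\mathrm{conv}(Q)$, and then $\bS{\tau'(d)}:=\{0\}\cup Q$ already satisfies the lemma for $k=\tau'(d)$, with $\e'=\e_0$.

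For the remaining range $d+1\le k<\tau'(d)$, since $0\in\mathrm{conv}(Q)$ and $Q$ is finite, Carath\'eodory's theorem supplies $Q'\subseteq Q$ with $|Q'|\le d+1$ and $0\in\mathrm{conv}(Q')$; choose any $J$ with $Q'\subseteq J\subseteq Q$ and $|J|=k$, which is possible since $|Q'|\le d+1\le k\le\tau'(d)=|Q|$. Then $0\in\mathrm{conv}(Q')\subseteq\mathrm{conv}(J)$, while the pairwise distances within $J\subseteq Q$ are still $\ge 1+\e_0$, so $\S{k}:=J$ and $\bS{k}:=\{0\}\cup J$ work, again with $\e'=\e_0$ --- so a single $\e'$ serves all admissible $k$. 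I expect the crux (the third paragraph) to be the only genuine obstacle: the tempting shortcut of taking $\S{k}$ to be an arbitrary $k$-element subset of a kissing configuration fails, because such a subset can slip into an open half-space, so one must first establish $0\in\mathrm{conv}(\cdot)$ for the full configuration --- where it is forced by the maximality in the definition of $\tau'(d)$ --- and only afterwards shrink via Carath\'eodory; the one point to watch inside the crux is that $\sqrt 2$ must be compared with $1$, not with the separation $1+\e_0$ of $Q$, which could itself exceed $\sqrt 2$.
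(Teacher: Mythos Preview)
Your proof is correct, and it proceeds along a genuinely different route from the paper's. The paper starts from an arbitrary $k$-point sphere packing witnessing $k\le\tau'(d)$ and, whenever the current set lies in an open half-space through the origin, iteratively swaps a point for the antipode of that half-space; it argues that this process either produces a set not contained in any such half-space or, after at most $k$ swaps, yields $k$ unit vectors with pairwise negative inner products, at which point Observation~\ref{o.dotproduct2} (together with $k\ge d+1$) forces $0$ into the convex hull. Your argument instead exploits the \emph{extremality} in the definition of $\tau'(d)$: taking $Q$ to realise the maximal minimum-distance $m_{\tau'(d)}>1$, you observe that if $0\notin\mathrm{conv}(Q)$ then the antipode of the separating hyperplane adjoins to $Q$ as an extra point at distance $>\sqrt 2>1$ from every $q_i$, contradicting $m_{\tau'(d)+1}\le 1$. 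You then descend to the remaining values $d+1\le k<\tau'(d)$ via Carath\'eodory. Your approach is cleaner: it avoids the somewhat delicate bookkeeping of the iterative replacement, it does not need Observation~\ref{o.dotproduct2}, and it yields a single $\e'$ valid for all admissible $k$. The paper's approach, by contrast, works uniformly for each $k$ without first settling the case $k=\tau'(d)$ or appealing to Carath\'eodory, which keeps the argument self-contained with respect to the preceding observations.
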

\begin{proof}
We first observe that the definition of $\tau'$ already gives us a set $\S k$ with the desired properties, except that it may all lie in some open half-space through the origin.  In this case, however, we can delete a point and replace it with the point $x_H$ on the unit sphere opposite the half-space $H$, and furthest away from the halfspace. We do this repeatedly and note that because the above exchange of points only happens when all points are on one side of a half-space $H'$, $x_H$ remains as the unique point which is in the open half-space opposite to $H$.  Furthermore, doing this repeatedly, we can achieve either a set $\S k$ with all the desired properties, or can find after at most $k$ steps a set $\S k$ of points on the sphere separated pairwise by at least $\e'>0$ more than unit distance, and whose pairwise dot products as vectors in $\Re^d$ are all negative. But then Observation \ref{o.dotproduct2} and $k\geq d+1$ implies that the points cannot all lie in the interior of some half-space whose boundary passes through the origin.
\end{proof}
We are now ready to prove Theorem \ref{t.posdeg}.
\subsection*{Proof of Theorem \ref{t.posdeg}}
Given $k\geq 2$, we choose any $d'\leq d$ such that $d'+1\leq k\leq \tau'(d')$.

We apply Lemma \ref{l.config} with $k,d'$ to get a set ${S'}^{(k)}\sbs \Re^d$.
Observe first that the origin must lie in the convex hull $X$ of the set ${S'}^{(k)}$ given by Lemma \ref{l.config}; otherwise, there would be a supporting half-space $H$ of $X$ not containing the origin, and ${S'}^{(k)}$ would lie in the open half-space through the origin which is parallel to $H$, a contradiction.  Now we take $\S k={S'}^{(k)}\times \{0\}^{d-d'}$, and the origin is still in the convex hull of $\S k$.

Now, letting $\Delta^{d}$ denote a unit simplex centered at the origin (with $d+1$ points), we let 
\[
U=\bS k\cup \bigcup_{p\in \S k} \{(1.5)p+.1\cdot \Delta^d\}.
\]
So $U$ is a set of $1+k+(d+1)k$ points. (Figure \ref{f.dk} shows $U$ for the case $d=2,k=2$; note that in this case, $d'=1$.)

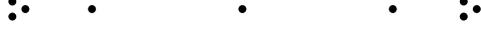
\begin{figure}[t]
\begin{center}
\begin{pdfpic}
\psset{unit=2cm,dotsize=3pt}
\begin{pspicture}(-1.6,-.3)(1.6,.3)
\SpecialCoor
\psdot(0,0)
\psdot(1,0)
\psdot(-1,0)

\rput(-1.5,0){
\psdot(-.0289,-.05)
\psdot(-.0289,.05)
\psdot(.0577,0)}

\rput(1.5,0){
\psdot(-.0289,-.05)
\psdot(-.0289,.05)
\psdot(.0577,0)}
\end{pspicture}
\end{pdfpic}
\end{center}
\caption{\label{f.dk} A configuration for forcing degree $2$ in 2-dimensions.}
\end{figure}

We now let $U_{\e,D}$ denote an $(\e,D)$ copy of $U$, for sufficiently small $\e>0$ and sufficiently large $D$.  Observe that since the origin is in the convex hull of $\S k$, the $k$ small copies of the $d$-simplex in $U$ ensure that the origin is in the \emph{interior} of the convex hall of $U$, and thus also in the interior of $U_{\e,D}$ for sufficiently small $\e$. 

Observe that (for large $D$)  the distance between any pair of points in an $U_{\e,D}$ is less than the minimum distance between $U_{\e,D}$ and $\ydn\stm U_{\e,D}$.  In particular, if $T$ denotes the minimum length spanning tree on $\ydn$, the subgraph $T[U_{\e,D}]$ induced by the points in $U_{\e,D}$ must be connected (and so a tree), or we could exchange a long edge for a short edge.  Moreover, the minimum length spanning tree on $T$ must restrict to a minimum length spanning tree on $U_{\e,D}$, and by construction, the point of $U_{\e,D}$ corresponding to the origin point in $U$ has degree $k$ in the MST on $U$.  Finally, no points in $\ydn\stm U_{\e,D}$ can be adjacent to the center of the star when $D$ is sufficiently large, by Observation \ref{o.convshield}.  Thus Observation \ref{o.findit} gives that $\a_{k,d}>0$ for $d+1\leq k\leq \tau'(d)$.

Finally, $\a_{1,d}>0$ is an immediate consequence of $\a_{3,d}>0$.\qed

Indeed, Theorem \ref{t.mstseq} follows immediately as well:
\subsection*{Proof of Theorem \ref{t.mstseq}}
Suppose $2\leq k<\tau'(d)$, and $T$ is a minimum spanning tree of $\ydn$ subject to the restriction that the maximum degree is $\leq k$.  By Observation \ref{o.findit} we have that there are $Cn$ $(\e,D)$ copies of the set $U$ from the previous proof, for some constant $C$, and from the argument above we see that each such copy $S_i$ will induce a (connected subtree) $T[S_i]$, which will have maximum degree at most $k$ in an instance of $\mst_k$. Replacing each $T[S_i]$ by the optimum $(k+1)$-star produces a spanning tree of maximum degree $k+1$, whose length is less by at least some constant $C' n$.  Rescaling by $t$ gives that the length difference is at least $C'n^{\frac{d-1}{d}}$.\qed

\begin{remark}
The same argument allows us to separate $\b^d_{\mst}$ from $\b^d_{Steiner}$ where the latter corresponds to the minimum length Steiner tree. We just need to use $(\e,D)$ copies of an equilateral triangle. We remark that adding the Steiner points corresponding to the Fermat points of the copies will reduce the tree length. The details can be left to the reader.
\end{remark}

We turn our attention now to 2-factors.  We begin with two very simple geometric lemmas:
\begin{lemma}\label{parallel}
Suppose that points $p,q,r,s$ satisfy
$$||p-q||, ||r-s||\geq \D\text{ and }||q-r||\leq \d,$$
where $\D\gg\d$ i.e $\D$ is sufficiently large with respect to $\d$.

Let $\th(x;y,z)$ denote the angle between the line segments $xy$ and $xz$. If $$\max\set{\th(p;q,s),\th(s;p,r)}\geq \D^{-1/3}$$ 
then
$$||p-s||\leq ||p-q||+||r-s||+\d-\frac{\D^{1/3}}4.$$
\end{lemma}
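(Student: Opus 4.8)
The plan is to reduce the statement to a single clean inequality about a triangle and then estimate that inequality with the law of cosines. First I would observe that both hypotheses and the conclusion are invariant under the relabeling $(p,q,r,s)\mapsto(s,r,q,p)$, which interchanges the two angles $\th(p;q,s)$ and $\th(s;p,r)$; hence we may assume without loss of generality that $\th(p;q,s)\ge\D^{-1/3}$. Next, since $\|q-s\|\le\|q-r\|+\|r-s\|\le\d+\|r-s\|$, it suffices to prove the cleaner bound
\[
\|p-s\|\le \|p-q\|+\|q-s\|-\tfrac{\D^{1/3}}{4},
\]
i.e. that the bent path $p\to q\to s$ beats the segment $ps$ by at least $\D^{1/3}/4$, using that the bend angle $\th(p;q,s)$ at $p$ is bounded below while the edge $pq$ is long.

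Then I would pass to the (possibly degenerate) triangle on $p,q,s$, writing $a=\|p-q\|\ge\D$, $b=\|q-s\|$, $c=\|p-s\|$ and $\th=\th(p;q,s)\in[\D^{-1/3},\pi]$. The law of cosines at $p$ gives $b^2=a^2+c^2-2ac\cos\th$, so $c$ is one of the two roots of $x^2-(2a\cos\th)x+(a^2-b^2)=0$. Its discriminant $b^2-a^2\sin^2\th$ is nonnegative, since $a\sin\th$ is the distance from $q$ to the line through $p$ and $s$, hence at most $\|q-s\|=b$. Bounding $c$ by the larger root and then using $\sqrt{1-t}\le 1-t/2$ (valid since $0<b$, e.g. $b\ge\D-\d$),
\[
c\le a\cos\th+\sqrt{b^2-a^2\sin^2\th}\le a\cos\th+b-\frac{a^2\sin^2\th}{2b},
\]
so $c-(a+b)\le -a(1-\cos\th)-\frac{a^2\sin^2\th}{2b}\le -a(1-\cos\th)$, and it remains only to bound $a(1-\cos\th)$ from below.

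This final estimate is the routine part. For $\D^{-1/3}\le\th\le1$ the inequality $1-\cos\th\ge\tfrac{\th^2}{2}-\tfrac{\th^4}{24}\ge\tfrac{\th^2}{3}\ge\tfrac13\D^{-2/3}$, combined with $a\ge\D$, gives $a(1-\cos\th)\ge\tfrac13\D^{1/3}$; and for $\th\ge1$ we have $1-\cos\th\ge1-\cos1>\tfrac13$, so $a(1-\cos\th)>\tfrac13\D\ge\tfrac13\D^{1/3}$, using $\D\ge1$ (part of $\D\gg\d$). Either way $c\le a+b-\tfrac13\D^{1/3}\le a+b-\tfrac14\D^{1/3}$, which is exactly the reduced inequality from the first paragraph, and hence the lemma.

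I expect the only genuine decision points to be the symmetry reduction in the first paragraph and the triangle-inequality trick that replaces $\|r-s\|+\d$ by $\|q-s\|$; after those, the chain of inequalities is essentially forced, and the gap between the factor $\tfrac13$ one actually obtains and the required $\tfrac14$ leaves plenty of room for the crude estimate $1-\cos\th\gtrsim\th^2$.
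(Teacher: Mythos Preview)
Your argument is correct. The symmetry reduction $(p,q,r,s)\mapsto(s,r,q,p)$ is valid, the triangle-inequality step $\|q-s\|\le\|r-s\|+\d$ legitimately reduces the claim to $\|p-s\|\le\|p-q\|+\|q-s\|-\tfrac{\D^{1/3}}{4}$, and the law-of-cosines computation on the triangle $pqs$ (including the discriminant check $a\sin\th\le b$ and the lower bound $b\ge\D-\d>0$) goes through exactly as you say. The final estimate $a(1-\cos\th)\ge\tfrac13\D^{1/3}$ is also fine in both ranges of $\th$.

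The paper's own proof is slightly different and considerably shorter: instead of breaking the symmetry and working in the triangle $pqs$, it projects both $q$ and $r$ orthogonally onto the line through $p$ and $s$, obtaining in one stroke the symmetric inequality
\[
\|p-s\|\le \|p-q\|\cos\th(p;q,s)+\d+\|r-s\|\cos\th(s;p,r),
\]
and then applies $\cos x\le 1-x^2/3$ to whichever angle is at least $\D^{-1/3}$. Your route recovers the same saving from only one of the two cosines---which is why you needed the symmetry reduction up front---whereas the projection argument keeps both cosines and never needs to choose. Both approaches rest on the same core estimate $1-\cos\th\gtrsim\th^2$ paired with the long edge $\ge\D$; the projection version just packages it more economically.
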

\begin{proof}
We have
$$||p-s||\leq ||p-q||\cos\th(p;q,s)+\d+||r-s||\cos\th(s;p,r).$$
Now use $\cos x\leq 1-x^2/3$ for $x\leq 1$.
\end{proof}
\begin{minipage}{5in}
\begin{lemma}
\label{o.4points}
Suppose that points $p_i,q_i,r_i,s_i,i=1,2$ satisfy
\beq{long2}
||p_i-q_i||,||r_i-s_i||\geq \D\quad \text{for }i=1,2
\eeq
and also that $q_1,r_1,q_2,r_2$ are contained in a ball of radius $\d$. Then there is a matching on $\set{p_1,p_2,s_1,s_2}$ whose total length is at most 
\begin{equation}\label{lbound}
||p_1-q_1||+||r_1-s_1||+||p_2-q_2||+||r_2-s_2||+4\d -\tfrac 12 \D.
\end{equation}
\end{lemma}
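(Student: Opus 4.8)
The plan is to introduce a common reference point near the four ``inner'' endpoints $q_1,r_1,q_2,r_2$, describe each of the four ``long'' segments by the direction and distance of its outer endpoint from that point, and then pick whichever of the three perfect matchings on the outer endpoints is best according to a dot‑product criterion. Concretely, fix $c$ with $\|b-c\|\le\delta$ for every $b\in\{q_1,r_1,q_2,r_2\}$ (the center of the given ball), and relabel the four long segments as $(a_1,b_1)=(p_1,q_1)$, $(a_2,b_2)=(s_1,r_1)$, $(a_3,b_3)=(p_2,q_2)$, $(a_4,b_4)=(s_2,r_2)$, so that $\|a_m-b_m\|\ge\Delta$ for each $m$ and the four points to be matched are exactly $a_1,a_2,a_3,a_4$. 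Set $x_m=\|a_m-c\|$ and $u_m=(a_m-c)/x_m$; the triangle inequality gives $x_m\ge\Delta-\delta>0$ and $x_m-\delta\le\|a_m-b_m\|$, hence $\sum_m x_m\le\sum_m\|a_m-b_m\|+4\delta$, so it suffices to produce a matching $M$ with $\mathrm{len}(M)\le\sum_m x_m-\tfrac12\Delta$.

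The geometric core is a per‑pair estimate. For $i\ne j$ the law of cosines reads $\|a_i-a_j\|^2=(x_i+x_j)^2-2x_ix_j(1+u_i\cdot u_j)$, and since $0\le 2x_ix_j(1+u_i\cdot u_j)\le 4x_ix_j\le(x_i+x_j)^2$, applying $\sqrt{A-B}\le\sqrt A-\tfrac{B}{2\sqrt A}$ together with $\tfrac{x_ix_j}{x_i+x_j}\ge\tfrac12\min(x_i,x_j)\ge\tfrac12(\Delta-\delta)$ yields
\[
\|a_i-a_j\|\ \le\ (x_i+x_j)-\frac{x_ix_j(1+u_i\cdot u_j)}{x_i+x_j}\ \le\ (x_i+x_j)-\tfrac12(\Delta-\delta)(1+u_i\cdot u_j).
\]
Summing this over the two pairs of a matching $M=\{(a_i,a_j),(a_k,a_l)\}$ gives $\mathrm{len}(M)\le\sum_m x_m-\tfrac12(\Delta-\delta)\big[(1+u_i\cdot u_j)+(1+u_k\cdot u_l)\big]$, so it is enough to find a matching whose bracket is at least $\tfrac43$: then, in the relevant regime $\Delta\ge4\delta$ (so $\Delta-\delta\ge\tfrac34\Delta$), the subtracted quantity is at least $\tfrac12\cdot\tfrac34\Delta\cdot\tfrac43=\tfrac12\Delta$.

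To find such a matching I would use the identity $\sum_{i<j}u_i\cdot u_j=\tfrac12(\|u_1+u_2+u_3+u_4\|^2-4)\ge-2$. Writing $M_1=u_1\cdot u_2+u_3\cdot u_4$, $M_2=u_1\cdot u_3+u_2\cdot u_4$, $M_3=u_1\cdot u_4+u_2\cdot u_3$ for the three matchings, we have $M_1+M_2+M_3\ge-2$, so $M_t\ge-\tfrac23$ for some $t$, and the matching indexed by that $t$ has bracket $2+M_t\ge\tfrac43$. This completes the argument.

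The only genuinely non‑routine ingredient is this averaging step — the observation that the three matchings cannot all contain a pair that is ``nearly antipodal through $c$'', packaged cleanly as $\sum_{i<j}u_i\cdot u_j\ge-2$. The rest is bookkeeping; the one point needing a little care is carrying $x_m\ge\Delta-\delta$ (rather than $\Delta$) through the constants, which is harmless in the intended regime $\Delta\gg\delta$ — the same regime in which the companion Lemma~\ref{parallel} is stated — but would otherwise have to be assumed or treated separately.
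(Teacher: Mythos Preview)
Your proof is correct and follows a genuinely different route from the paper's. Both arguments rest on the same per-pair law-of-cosines estimate --- in your notation, $\|a_i-a_j\|\le (x_i+x_j)-\tfrac{x_ix_j}{x_i+x_j}(1+u_i\cdot u_j)$ --- but differ in how the matching is chosen. The paper asserts that among the four outer points some pair subtends an angle at most $\tfrac\pi2$ at the center, saves the full $\tfrac12\Delta$ on that single edge, and handles the partner edge with the plain triangle inequality. You instead average over all three matchings via $\sum_{i<j}u_i\cdot u_j=\tfrac12\bigl\|\sum_m u_m\bigr\|^2-2\ge-2$, extract a matching with bracket at least $\tfrac43$, and collect savings from both edges. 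Your version costs a little in the constants (you need $\Delta\ge4\delta$ to recover the stated $\tfrac12\Delta$; the paper has the same slack, writing $\min\{b,c\}\ge\Delta$ where really only $\Delta-\delta$ is available), but it is dimension-free. The paper's pigeonhole claim is in fact not valid for $d\ge 3$: four unit vectors \emph{can} have all pairwise dot products negative (e.g.\ the regular-tetrahedron directions in $\mathbb{R}^3$), so no pair need have angle below $\tfrac\pi2$. Your averaging step is thus the more robust way to obtain the lemma in the generality in which the paper actually uses it.
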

\end{minipage}
\begin{proof}
Without loss of generality we let the $q_i,r_i$ be within distance $\d$ of the origin, and then let $\theta(x,y)$ denote the angle between $x$ and $y$ via the origin that is less than or equal to $\p$. There are three possible pairings of the points $P=\{p_1,p_2,s_1,s_2\}$, and for at least one such pairing, $\theta(x,y)< \tfrac 1 2 \pi$ for one of the pairs.

Let us take $\{x,y\}$ and $\{w,z\}$ to be the pairs in such a pairing of $P$, with $\theta(x,y)\leq\tfrac 1 2 \pi$.  We let $T$ denote the triangle with vertices $x,y,0$, let $a,b,c$ denote the sidelengths, where $a$ is length of the side opposite 0, and $s$ denote the semi-perimeter $(a+b+c)/2$. Now $a\leq (b^2+c^2)^{1/2}$ and in fact
\begin{multline*}
b+c-a\geq b+c-(b^2+c^2)^{1/2}
=(b+c)\brac{1-\brac{1-\frac{2bc}{(b+c)^2}}}\\
\geq \frac{bc}{b+c}
\geq \frac12\min\set{b,c}
\geq \frac12 \D.
\end{multline*}
Thus we find a pairing of $P$ for which the total length is at most $||p_1|| +||p_2|| + ||s_1|| + ||s_2||-\tfrac 1 2 \D$, and we will be done after applying the triangle inequality four times and using the fact that $||q_i||,||r_i||\leq \d$ for $i=1,2$.
\end{proof}
\begin{figure}[t]
\begin{center}
\begin{pdfpic}
\psset{unit=.5cm,dotsize=2pt}
\begin{pspicture}(-5,-2)(5,2)
\SpecialCoor
\rput{90}(-3,0){
\psframe(-5,-2)(5,2.5)
\psdot(-.2,0)
\psdot(-.2,.4)
\psdot(.1,.4)
\psdot(.2,.1)

\pscircle[linestyle=dashed](0,.2){.4}

\psdot(-4,2)
\psdot(-3.8,-1.4)
\psdot(3.7,1.2)
\psdot(3.9,1.8)

\psline(-5,1)(-4,2)(-.2,.4)(.1,.4)(3.9,1.8)(5,2)
\psline(-5,-1)(-3.8,-1.4)(-.2,0)(.2,.1)(3.7,1.2)(5,1)
\psline[linestyle=dashed](-4,2)(3.9,1.8)
}

\rput{90}(3,0){
\psframe(-5,-2)(5,2.5)
\psdot(-.2,0)
\psdot(-.2,.4)
\psdot(.1,.4)
\psdot(.2,.1)

\pscircle[linestyle=dashed](0,.2){.4}

\psdot(-4,2)
\psdot(-3.8,-1.4)
\psdot(3.7,1.2)
\psdot(3.9,1.8)

\psline(-5,1)(-4,2)(3.9,1.8)(5,2)
\psline(-5,-1)(-3.8,-1.4)(3.7,1.2)(5,1)
\psline(-.2,0)(-.2,.4)(.1,.4)(.2,.1)(-.2,0)
}

\end{pspicture}
\end{pdfpic}
\end{center}
\caption{\label{f.shortcut} When not all pairs are nearly straight the old 2-factor (left) can be shortened to a new one (right). (The dashed circle of radius $\e$ encloses $g+1=4$ points.)}
\end{figure}
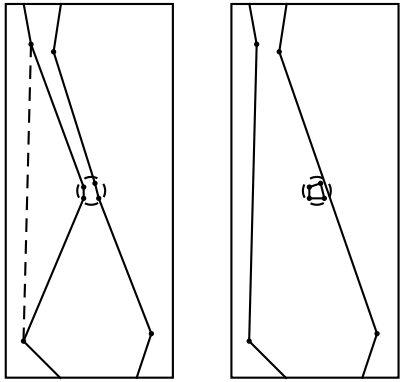

\subsection*{Proof of Theorem \ref{t.girthseq}}
Let $F_{g+1}$ be a minimum length 2-factor in $\ydn$ whose cycles all have length $\geq g+1$.  We let $U_{\e,D}\subset \ydn$ denote any set of $g$ points of radius $\e$ and at distance $D$ from $\ydn\stm U_{\e,D}$.  Note that Lemma \ref{o.findit} implies that there are a linear number of copies of such sets.  We now define $V_{\e,D,F}$ as a collection of three instances $U_1,U_2,U_3$ of $U_{\e,D}$, centered at the vertices of an equilateral triangle of sidelength $2D$, and lying at distance $\D$ from $\ydn\stm V_{\e,D,\D}$; we will take $D$ large relative to $\e$ and $\D$ large relative to $D$. 

We call a multiset of edges a \emph{2-matching} if every vertex is incident with exactly 2 (not necessarily distinct) edges in the multiset.  This is the same as a 2-factor, except that it can contain ``2-cycles'', which consist of a single edge included twice.  

We will begin by showing how to give a constant-factor shortening of the 2-factor $F_{g+1}$ to a 2-matching $F$, without being careful to avoid creating cycles of length shorter than $g$. In particular, we prove the following lemma:
\begin{lemma}\label{l.V}
There is an absolute constant $\d$ such that for suitable choices of $\e\ll D\ll \D$, any instance of $V=V_{\e,D,\D}$ allows a modification $F$ of $F_{g+1}$ so that
\begin{enumerate}
\item $F$ is a 2-matching;
\item $F$ has weight at least $\d$ less than the length of $F_{g+1}$;
\item \label{p.Vgirth} Cycles of $F$ lying entirely in $V$ have length $\geq g$;
\item $F$ is a local modification of $F_{g+1}$, in the sense that any edges of $F_{g+1}$ disjoint from $V$ are still present in $F$.
\end{enumerate}
\end{lemma}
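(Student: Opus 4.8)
The plan is to produce $F$ by a single surgery that \emph{dissolves} $V$: replace each of the three clusters of $V$ by a cycle through its own points, delete every edge of $F_{g+1}$ that touches $V$, and stitch the resulting loose ends outside $V$ back together. The gain in length will always come from \emph{one} of two sources --- the long boundary edges (via Lemma~\ref{o.4points}) or the inter-cluster edges, whose length is forced to be $\approx 2D$ --- and having \emph{three} clusters is exactly what guarantees that at least one of these sources is plentiful.

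\textbf{Setup and bookkeeping.} First I would fix notation. Write $V=U_1\cup U_2\cup U_3$, the $U_i$ being the $g$-point clusters (radius $\e$) placed near the vertices $P_1,P_2,P_3$ of an equilateral triangle of side $2D$, with $V$ at distance $\ge\D$ from $\ydn\stm V$; throughout $\e\ll D\ll\D$. Call an edge of $F_{g+1}$ meeting $V$ \emph{intra} (both ends in one $U_i$; length $<2\e$), \emph{inter} (ends in two different $U_i$; length in $[2D-2\e,2D+2\e]$), or \emph{boundary} (one end in $V$, one outside; length $\ge\D$). Since $F_{g+1}$ is $2$-regular and every cycle has length $\ge g+1>|U_i|$, the intra edges inside each $U_i$ form an acyclic subgraph, hence a union of $c_i:=g-a_i\ge 1$ paths (with $a_i$ the number of intra edges of $U_i$), so $U_i$ is incident to exactly $2c_i$ external edge-ends. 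With $m=\#\text{inter}$ and $\ell=\#\text{boundary}$ this gives $2m+\ell=E:=\sum_i 2c_i\ge 6$, with $\ell$ even; in particular \emph{if $\ell\le 2$ then $m\ge 2$} (and $m\ge 3$ if $\ell=0$). Note $V$ has diameter $\le 2D+2\e$ and is contained in a ball of radius $\rho\le\tfrac32 D$ (the triangle's circumradius plus $\e$).

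\textbf{The surgery and conditions (1), (3), (4).} I would delete every intra, inter, and boundary edge, and relink the $c_i$ paths of each $U_i$ into one cycle on its $g$ vertices, at extra cost $\le 2\e g$ per cluster. Every point of $V$ then has degree $2$ again, and the only loose ends left are single half-edges at the outside endpoints $v_1,\dots,v_\ell$ of the deleted boundary edges (with multiplicity). These I reconnect: partition the deleted boundary edges $\{v_j,u_j\}$ ($u_j\in V$, $\|v_j-u_j\|\ge\D$) into $\lfloor\ell/4\rfloor$ groups of four and one leftover group of $0$ or $2$; to each group of four apply Lemma~\ref{o.4points} with small ball a ball of radius $\rho$ containing $V$, replacing those four edges by a two-edge matching on the corresponding $v_j$'s of total length $\le\sum_j\|v_j-u_j\|+4\rho-\tfrac12\D$; for a leftover pair $\{v,u\},\{v',u'\}$ add the single edge $\{v,v'\}$, of length $\le\|v-u\|+\|v'-u'\|+\diam(V)$ by the triangle inequality through $V$. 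The result $F$ is a $2$-matching, so (1) holds; the only cycles of $F$ lying entirely in $V$ are the three cluster cycles, each of length $g\ge g$, so (3) holds; no edge of $F_{g+1}$ disjoint from $V$ was touched, so (4) holds. (A degenerate $v_j$ carrying two loose ends is simply matched to two distinct partners, or ruled out by a generic choice of $V$.)

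\textbf{The saving (2), and the main obstacle.} Writing $L_{\mathrm{in}},L_{\mathrm{it}},L_{\mathrm{bd}}$ for the total lengths of the deleted intra, inter, and boundary edges, the cluster cycles cost at most $L_{\mathrm{in}}+6\e g$, so
\[
\mathrm{wt}(F)-\mathrm{wt}(F_{g+1})\ \le\ 6\e g\ +\ \bigl(\text{reconnection cost}-L_{\mathrm{bd}}\bigr)\ -\ L_{\mathrm{it}}.
\]
By the estimates above, $\text{reconnection cost}-L_{\mathrm{bd}}\le\lfloor\ell/4\rfloor\bigl(4\rho-\tfrac12\D\bigr)+\diam(V)$. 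If $\ell\ge 4$ this is at most $\bigl(4\rho-\tfrac12\D\bigr)+\diam(V)\le 8D+2\e-\tfrac12\D\le-\d$ once $\D$ is large relative to $D$, and then the displayed bound is $<-\d$. If $\ell\le 2$ the reconnection costs at most $\diam(V)\le 2D+2\e$ more than $L_{\mathrm{bd}}$, but now $m\ge 2$ forces $L_{\mathrm{it}}\ge m(2D-2\e)\ge 4D-4\e$, so the displayed bound is $\le 6\e g+(2D+2\e)-(4D-4\e)<0$ once $D$ is large relative to $\e g$ (for $\ell=0$ use $m\ge 3$, $L_{\mathrm{it}}\ge 6D-6\e$, and no reconnection). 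So I would fix $\d:=1$, then choose $D$ large relative to $g$, then $\e$ small relative to $D$, then $\D$ large relative to $D$; every inequality closes. The crux is the observation that Lemma~\ref{o.4points} needs only the \emph{cluster-side} endpoints of its four long edges to lie in a common ball of radius $O(D)$, which \emph{any} four boundary edges at $V$ satisfy automatically --- so one never has to analyze whether the strands of $F_{g+1}$ through the clusters run ``straight'' or cross (the case Lemma~\ref{parallel} would otherwise be needed for, via a kink). The complementary worry, too few boundary edges, is exactly what three clusters dispose of, since $E\ge 6$ then forces $m\ge 2$. Everything else --- parities of loose ends, relinking cluster paths into length-$g$ cycles, degenerate coincidences among the $v_j$ --- is routine bookkeeping.
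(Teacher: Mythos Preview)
Your argument is correct and takes a genuinely different route from the paper's. The paper proceeds by a three-way case analysis on the local structure of $F_{g+1}$ at each cluster $U_i$: if some pair of external edges at a $U_i$ is not ``nearly straight'' it invokes Lemma~\ref{parallel} for a saving of order $D^{1/3}$; if some $U_i$ carries two distinct pairs of external edges it applies Lemma~\ref{o.4points} at the $U_i$ scale (inner ball of radius $\e$, long edges of length $\ge D$); and only after reducing to the situation where each $U_i$ has a single nearly-straight pair does it use the equilateral geometry to force two of these pairs to exit $V$ and apply Lemma~\ref{o.4points} at the $V$ scale. You bypass this entirely by always performing the same dissolution surgery and branching only on the boundary-edge count $\ell$: for $\ell\ge 4$ a direct application of Lemma~\ref{o.4points} at the $V$ scale saves $\tfrac12\D-O(D)$, while for $\ell\le 2$ the handshake identity $2m+\ell=\sum_i 2c_i\ge 6$ forces $m\ge 2$ inter-cluster edges, whose deletion already saves $\Omega(D)$. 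This is cleaner---Lemma~\ref{parallel} and the whole ``nearly straight'' discussion are never needed, and the triangular placement of the clusters enters only through the crude count $E\ge 6$ rather than through any angular reasoning. The paper's approach, by contrast, makes the underlying geometric obstruction more visible and exhibits savings at two distinct scales. One small wrinkle: your dismissal of the degenerate case where a single outside vertex $v$ carries two boundary edges (``ruled out by a generic choice of $V$'') is not quite legitimate, since the lemma is asserted for \emph{every} instance of $V$; but this is easily repaired, e.g.\ by inserting such a $v$ into the cluster cycle at its nearer neighbour $u_1\in V$, which costs at most $2\e$ beyond the deleted boundary length $\|v-u_1\|+\|v-u_2\|$ and leaves the $\Omega(D)$ inter-edge saving intact.
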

Again, Lemma \ref{o.findit} implies that there are a linear number of instances of $V_{\e,D,\D}$ in $\ydn$.  In particular, this lemma would be sufficient to argue that $\btfg<\btF{g+1}$, except that $F$ may not have girth $g$.

\begin{proof}[Proof of Lemma \ref{l.V}]
For $U_i=U_{\e,D}$ in $V$, there are (at least 2) edges in $F_{g+1}$ from $\ydn\stm U_i$ to $U_i$, since $g+1>g=|U_i|$.  We can pair these edges so that each pair lies on a common cycle of $F_{g+1}$, and so that the two edges in a pair are joined in $F_{g+1}$ by a path through (possibly just 1 point of) $U_i$.    Similarly, we can pair edges between $V$ and $\ydn \stm V$.  (Some pairs for $V$ may also be pairs for a $U_i$, others may not.)

Now, by choosing $D$ large relative to $\e$, we can assume that each pair of edges for a $U_i$ is \emph{nearly straight}, in the sense that the angle between the endpoints of the edges in $\ydn$ via any point in $U_{\e,D}$ is close to $\pi$; otherwise, we can modify $F_{g+1}$ by including all edges of some $g$-cycle through $U_i$, and shortcutting each pair of edges between $\ydn\stm U_i$ and $U_i$ with a single edge between the endpoints in $\ydn\stm U_i$. (Figure \ref{f.shortcut}.)  The result has length smaller by a constant $\d=\Omega(D^{1/3})$, see Lemma \ref{parallel}.  To ensure condition \eqref{p.Vgirth} for $F$, we must now also shortcut all remaining pairs of edges between $V$ and $\ydn\stm V$, delete any edges in $V\stm U_i$, and then add $g$-cycles to the remaining $U_j$'s.  (This step adds length which can be made arbitrarily small by decreasing $\e$.)

\begin{figure}[t]
\begin{center}
\begin{pdfpic}
\psset{unit=1cm,dotsize=3pt}
\begin{pspicture}(-1.5,-1)(2.5,2.866)
\SpecialCoor
\rput{0}(1,0){
\psdot(.1,0)
\psdot(-.1,0)
\psdot(0,.173)
}

\rput{0}(-1,0){
\psdot(.1,0)
\psdot(-.1,0)
\psdot(0,.173)
}
\rput{0}(0,1.73){
\psdot(.1,0)
\psdot(-.1,0)
\psdot(0,.173)
}

\psline(-1.5,-.866)(-.9,0)(-1.1,0)(-1,.173)(.1,1.73)(-.1,1.73)(0,1.903)(.5,2.598)

\psline(.5,-.866)(.9,0)(1.1,0)(1,.173)(2.5,2.598)

\end{pspicture}
\end{pdfpic}
\end{center}
\caption{\label{f.V} An instance of $V_{\e,D,\D}$ (here for $g=2$, $d=2$).  When all pairs of edges entering/leaving $U_i$'s are nearly straight, we must have at least 2 pairs of edges entering/leaving $V$, as shown here.}
\end{figure}
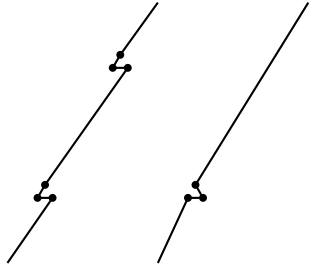

We may also assume that each $U_i$ has only a single pair of edges.  Otherwise, if there are two different pairs, we delete the edges in the two pairs, use Lemma \ref{o.4points} to add a pair of edges among the 4 outside endpoints of the pairs of total weight which is less than the total weight of the pairs by a constant (note that we may have created a 2-cycle if one of these edges was already present, which is why $F$ is only a 2-\emph{matching}), shortcut all other remaining pairs between $V$ and $\ydn$, delete all edges within $V$, and add $g$-cycles to each $U_i$.  For sufficiently small $\e$, we get a constant length improvement. 

 Thus we may assume that each $U_i$ in $V$ has a single pair, and that the pair for each $U_i$ in $V$ is nearly straight.  The crucial point is that this implies that there must be at least \emph{two} pairs of edges joining $V$ to $\ydn\stm V$: since, e.g., edges joining $U_1$ to $U_2$ and $U_1$ to $U_3$ would not be nearly straight. Therefore at least one of the $U_i$'s has no edges to the other $U_i$'s.  (See Figure \ref{f.V}.)  We conclude, as in the previous paragraph, by deleting the edges in the two pairs, using Lemma \ref{o.4points} to find a pair of edges among the 4 outside endpoints of the pairs of total weight which is less than the total weight of the pairs by a constant, shortcutting all other remaining pairs between $V$ and $\ydn$, deleting all edges within $V$, and adding $g$-cycles to each $U_i$. 
\end{proof}

We must now address unintentional problems of girth.  (Notice that, in shortcutting edges, we may have left behind short cycles---in particular, any 2-cycles must be eliminated.)  To this end, we say that $V=V_{\e,D,\D}$ is $\e$-\emph{surrounded} if the set $\cN_V$ of points of $\ydn\stm V$ within distance $3\D$ of $V$ has the properties that: (1) each $x\in \cN_V$ lies within distance $\e$ of the sphere $S$ of radius $2\D$ centered at the center of $V$, and (2) each $x\in S$ lies within $\e$ of $\cN_V$.  (Essentially, $\cN_V$ is an approximation to an $\e$-net on $S$, which surrounds $V$).  Lemma \ref{o.findit} implies that there are a linear number of $\e$-surrounded $V$'s, and additionally, a linear number of $\e$-surrounded sets $V$ satisfying the requirements in the previous paragraph (each $U_i$ has a single-pair of edges to the rest of $\ydn$, etc.).

We now show that if $V$ is $\e$-surrounded, then there is an constant $C_{g,\e}$, which can be made arbitrarily small by decreasing $\e$, such that there is a 2-factor $F'$ such that:
\begin{enumerate}[(A)]
\item \label{Fweight} $F'$ has total weight $w(F')\leq w(F_{g+1})+C_{g,\e}$,
\item \label{Ngirth} every cycle in $F'$ 
is still of length $\geq g+1$,
\item \label{Surrounded} All edges in $F'$ incident with $V$ either lie in $V$ or 
intersect $\cN_V$.
\end{enumerate} 

To produce $F'$ from $F_{g+1}$, we consider each edge $e=\set{u,v}$ from $V$ to $\ydn\stm (\cN_V\cup V)$ which does not intersect $\cN_V$, and 
\begin{enumerate}
\item Locate a point $x$ in $\cN_V$ within distance $\e$ of a point $w$ on the edge $e$. Let $C=(x=x_1,x_2,\ldots x_k,x_{k+1}=x_1)$ be the cycle of $F_{g+1}$ that contains $x$.  If $u=x_i$ for some $i$, then we choose the cycle orientation so that $v=x_{i-1}$.

\item \label{S.add} Add the edges $\set{u,x_1}$, $\set{x_k,v}$ to the 2-factor and delete the edges $e$ and $\set{x_1,x_k}$.
\end{enumerate}
This ensures (C) and the change in cost for this one substitution is 
\begin{align*}
&||x_1-u||+||x_k-v||-||x_1-x_k||-||v-w||-||u-w||\\
\leq& ||x_1-u||+||x_1-w||-||u-w||\\
\leq& 2||x_1-w||.
\end{align*}
Thus dealing with all edges from $V_{\e,D,\D}$ to $\ydn\stm V_{\e,D,\D}$ increases the cost by at most $12g\e$, since there are $3g$ points in $V$ and hence at most $6g$ edges from $V_{\e,D,\D}$ to $\ydn\stm V_{\e,D,\D}$.

After this, any cycle in $F'$ but not in $F_{g+1}$ must contain an edge added in Step \eqref{S.add}.  But either $u,v\notin \{x_1,\dots,x_k\}$, in which case the length of this cycle is at least $k+2\geq g+3$, or else $u=x_i,v=x_{i+1}$ and this cycle is $x_1,x_2,\dots,x_{i-1}x_kx_{k-1}\cdots x_{i}x_1$ and so has length $k\geq g+1$.

\bigskip

We are now prepared to find a 2-factor $F_g$ whose weight is smaller than $F_{g+1}$ by a constant factor.  For some small constant $c$, we have that there are at least $cn$ instances of $\e$-surrounded $V=V_{\e,D,\D}$'s.  We take these instances as $V_1,V_2,\dots, $ in any order, and beginning with $F=F_{g+1}$ and for each $i=1,2,\dots,$ we
\begin{enumerate}[(i)]
\item Find $F'$ for $V_i$ as above (with weight increase $C_{g,\e}$ which we make arbitrarily small) 
\item Apply Lemma \ref{l.V} to shorten $F'$ at $V_i$ to $F_0$ with a constant weight improvement
\item \label{s.merge} At an arbitrarily small cost, modify $F_0$ to a 2-factor $F_0'$ which has girth $g$, by merging cycles intersecting the net $\cN_{V_i}$, and set $F=F_0'$ (explanation is below).
\end{enumerate}

In particular, to carry out Step \eqref{s.merge}, note that any cycle $C$ of length $<g$ in $F_0$ includes a point $x$ of $\cN_V$, and we can merge $C$ with the cycle through a point $y$ within $2\e$ of $x$, at an additional cost of $\leq 2\e$: We join $x$ and $y$, delete edges $\set{x,x'}$ and $\set{y,y'}$ incident with each in the previous 2-factor and replace them by $\set{x,y},\set{x',y'}$ at a cost of
$$||x-y||+||x'-y'||-||x-x'||-||y-y'||\leq  2||x-y||.$$

After applying Steps (i)--(iii) for each $V\in \cV$, the result is a 2-factor $F_g=F$ of girth $g$, whose total weight is smaller than the total weight of $F_{g+1}$ by a constant factor.
\qed
\bigskip

The proof of the counterpoint Theorem \ref{t.2flimit} will be given in Section \ref{s.lim}.  For now we consider matchings.  In fact, Theorem \ref{t.mmnottsp} can be viewed as a consequence of Theorem \ref{t.hk}, via Proposition 5 of \cite{GB}.  However, we also give a short self-contained proof.

\subsection*{Proof of Theorem \ref{t.mmnottsp}}
We define the Euclidean functional $\tmm(X)$ as the minimum length union of two matchings on $X$.  Note that we make no requirement of disjointness and that we trivially have that $\tmm(X)=2\cdot \mm(X)$ for all $X$.  On the other hand, a TSP through $X$ can be viewed as a (near)-union of two matchings (alternating edges around the tour, leaving one vertex unmatched if $n$ is odd).  Our aim will be to give a constant factor improvement to the union of a pair of matchings given by the TSP, to show that $\tmm(\ydn)$ is asymptotically less than $\tsp(\ydn)$.  To this end, we let $M_1$ and $M_2$ denote a pair of matchings derived from the minimum length TSP.

We let $U_{\e,D}$ denote a set of two points separated by distance at most $\e$ and at distance at least $D$ from all other points of $\ydn$, and let $V_{\e,D,F}$ denote a collection of 5 instances $U_1,\dots,U_5$ of $U_{\e,D}$, centered at the vertices of a regular pentagon of sidelength $2D$, such that all other points of $\ydn$ are at distance $\geq F$ from this set.  As before, Lemma \ref{o.findit} gives that there are a linear number of instances of $V_{\e,D,F}$ for any fixed $F,$ $D,$ and $\e>0$.  Moreover, as before, if we have a linear number of instances $U_{\e,D}$ in which a pair of edges of a matching leaves $U_{\e,D}$ and is not nearly straight, then we can make a constant improvement to the matching, by joining the two points of $U_{\e,D}$ and shortcutting the outside endpoints of the edges leaving $U_{\e,D}$ with a single edge.

Since $M_1$ and $M_2$ are disjoint, the pigeonhole principle gives that for some $s\in \{1,2\}$ and at least three of the $U_i$'s in any $V_{\e,D,F}$, the pair of points in $U_i$ is omitted from $M_s$.  In particular, we may assume without loss of generality that we have a linear number of $V_{\e,D,F}$'s for which the set $I$ of indices $i$ for which the points in $U_i$ are unmatched in $M_1$ has cardinality $|I|\geq 3$.  Moreover, from the previous paragraph, there must be a linear number of such $V_{\e,D,F}$'s which also have the property that the pair edges leaving the $U_i$, $i\in I$ is nearly straight.  In particular, as the point sets $U_i$ $(i\in I)$ are not nearly collinear, we must have as in the previous proof that there are (at least) 2 pairs of edges entering and leaving $V_{\e,D,F}$.  We conclude by applying Lemma \ref{o.4points} (with $2\e$, say) to get a constant factor improvement a linear number of times.\qed

\subsection*{Proof of Theorem \ref{t.hfac}}
It suffices to show that for fixed $r\geq 1$, there are connected graphs $H$ with $r\cdot |V(H)|$ edges for which the constant $\b^d_H$ is arbitrarily large, which we show by demonstrating that $\b^d_T$ can be arbitrarily large even just over trees $T$.  To this end, we let $T_k$ be the tree on $k+1$ vertices which has $k$ leaves.

Given any large constant $u=t/m$ for some integer $m$, we decompose the $[0,t]^d$ cube with $m^d$ subcubes
of side $u$. Now the number of points in each subcube is binomially distributed with mean $u^d$. Let a point in $\ydn$ be {\em good} if the subcube $S_\a$ that it lies in has at least $(1-\e)u^d$ members of $\ydn$ and the total number of points in the $\leq3^d$ subcubes that touch $S_\a$ contain at most $(1+\e)(3u)^d$ members of $\ydn$, where $\e=\frac{1}{10k}$. Assuming that $u$ is sufficiently large, the Chernoff bounds imply that a member of $\ydn$ is good with probability at least $1-\e/2$. Thus the expected number of good points in $\ydn$ is at least $(1-\e/2)n$.  Now the Chernoff bounds can be used to show that the number of members of $\ydn$ in any subcube is a.s. $O(\log n)$ and therefore, changing one point only changes the number of good points by $O(\log n)$ a.s. A fairly simple modification of McDiarmid's inequality now implies that a.s. $(1-\e)n$ of the members of $\ydn$ are good.

Since $\approx n/(k+1)$ points must have degree $k$ in a $T_k$ factor of $\ydn$, we have that there are at least $n/(2k)$ good points which have degree $k$. Now let $k=2(3u)^d$. Then a.s. a $T_k$ factor has length at least $\frac{n}{2k}\cdot\frac{(1-\e)k}{2}\cdot u>\frac{un}{5}$.

Rescaling the $[0,t]^d$ cube by a factor of $t$ gives that the minimum $T_k$ factor has length at least $\frac 1 5 u n^{\frac {d-1}{d}}$, and here $\frac u 5$ is an arbitrarily large constant.\qed

\subsection*{Proof of Theorem \ref{t.hk}}
We begin with some general observations regarding the shortest TSP through Euclidean point sets:
\begin{observation}\label{2meansstraight}
Suppose that $S_{\e,D}$ is an $(\e,D)$ copy of a fixed set $S$ for fixed $\e$ and sufficiently large $D$, and that at least 2 pairs of edges of a shortest TSP tour $\cL$ join $S_{\e,D}$ to $V\stm S_{\e,D}$.  Then the pairs are nearly straight (i.e., the angle for each pair is arbitrarily close to $\pi$ as $\e\to 0$, and $k,D\to \infty$).
\end{observation}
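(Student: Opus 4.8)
The plan is to derive the claim from the optimality of $\cL$: if some pair of edges of $\cL$ between $S_{\e,D}$ and $V\stm S_{\e,D}$ fails to be nearly straight, then $\cL$ can be shortened. Since $\cL$ is a single cycle, it crosses the cut $(S_{\e,D},\,V\stm S_{\e,D})$ an even number of times, and the crossing edges bracket the maximal subpaths of $\cL$ that lie inside $S_{\e,D}$; the pairs referred to in the statement are exactly these brackets. So write a generic pair as edges $\{a,p\}$, $\{q,b\}$ with $a,b\notin S_{\e,D}$, bracketing an internal subpath $A$ of $\cL$ running from $p$ to $q$ inside $S_{\e,D}$. Because $a,b$ lie at distance $>D$ from $S_{\e,D}$ we have $\|a-p\|,\|q-b\|\ge D$, whereas $\|p-q\|\le\diam(S_{\e,D})\le\diam(S)+2\e$ is a constant that does not grow with $D$.

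First I would fix such a pair and suppose it is not nearly straight: there is a point $x_0\in S_{\e,D}$ with $\angle(a,x_0,b)\le\pi-\eta$ for some fixed $\eta>0$. A routine ``far points, small region'' estimate shows this forces $\max\set{\th(a;p,b),\th(b;a,q)}\ge D^{-1/3}$ once $D$ is large (in the $\th$-notation of Lemma \ref{parallel}): indeed, if both of these angles were below $D^{-1/3}$ then $a$ and $b$ would have to lie on nearly opposite sides of the bounded set $S_{\e,D}$, giving $\angle(a,x,b)\ge\pi-O(D^{-1/3}+\diam(S_{\e,D})/D)$ at every $x\in S_{\e,D}$, contradicting the choice of $x_0$ for $D$ large. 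Applying Lemma \ref{parallel} with $\D=D$ and $\d=\diam(S_{\e,D})$ (legitimate since $D\gg\diam(S)$) then yields
\[
\|a-b\|\le\|a-p\|+\|q-b\|+\d-\tfrac14 D^{1/3}.
\]

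Next I would use the standing hypothesis that there are at least two crossing pairs: fix a second internal subpath $A'$ of $\cL$, disjoint from $A$. Modify $\cL$ by deleting $\{a,p\}$, $\{q,b\}$ and all of $A$, adding the edge $\{a,b\}$, and then reabsorbing the now-orphaned vertices $V(A)$ by replacing one edge of $A'$ (or, if $A'$ is a single vertex, one of its two crossing edges) with a path through all of $V(A)$. The result is again a single cycle through every vertex of $V$, and its length exceeds that of $\cL$ by at most
\[
\|a-b\|-\big(\|a-p\|+\|q-b\|\big)+(|S|+2)\,\diam(S_{\e,D})\ \le\ (|S|+2)\,\d-\tfrac14 D^{1/3},
\]
which is negative once $D$ is large relative to $|S|$ and $\diam(S)$. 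This contradicts optimality of $\cL$. Hence every pair is nearly straight; and from $\th(a;p,b),\th(b;a,q)<D^{-1/3}$, together with $a,b$ lying at distance $\ge D$ from the set $S_{\e,D}$ of diameter $O(\diam(S)+\e)$, we conclude $\angle(a,x,b)=\pi-O(D^{-1/3}+\diam(S_{\e,D})/D)$ for every $x\in S_{\e,D}$, which tends to $\pi$ in the stated regime.

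The main obstacle is the accounting in the rerouting step: one must be certain the cost of reabsorbing $V(A)$ is bounded \emph{independently of $D$}, and this is precisely where having at least two crossing pairs is used --- with only one pair, the vertices of $S_{\e,D}$ could be reattached to $\cL$ only by a detour of length $\Omega(D)$, and no net shortening would result (this is also why the hypothesis appears in the statement). The remaining ingredients --- the two ``far points'' angle estimates converting between ``not nearly straight'' and the hypothesis/conclusion of Lemma \ref{parallel} --- are elementary.
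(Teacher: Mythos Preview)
Your proposal is correct and follows essentially the same approach as the paper: shortcut the offending pair to gain a saving that grows with $D$, then reabsorb the orphaned vertices of $S_{\e,D}$ into the tour using the \emph{other} crossing pair at a cost bounded independently of $D$. The paper's own proof is a three-line sketch of exactly this idea; your version is a more fleshed-out execution, explicitly invoking Lemma~\ref{parallel} to quantify the shortcut gain and carrying out the angle-conversion between ``angle at a point of $S_{\e,D}$ bounded away from $\pi$'' and the hypothesis of Lemma~\ref{parallel}.
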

(Here the edges of $\cL$ joining $S_{\e,D}$ to $V\stm S_{\e,D}$ are \emph{paired} such that the endpoints of the edges in a pair which are inside $S_{\e,D}$ are joined by a portion of $\cL$ lying entirely in $S_{\e,D}$.)
\begin{proof}
Otherwise, we shortcut the edge pair which is not nearly straight to obtain a constant improvement (which, for a fixed angle, can be made large by increasing $D$).  The tour portion between one of the other edge pairs is modified to cover any vertices of $S_{\e,D}$ which are now missed by the tour, at an increased cost which does not depend on $D$.  
\end{proof}
\begin{observation} \label{nothree}
Suppose that $S_{\e,D}$ is an $(\e,D)$ copy of any fixed set $S$ for fixed $\e$ and sufficiently large $D$.  Then there are at most 2 pairs of edges in a shortest TSP tour which join $S_{\e,D}$ to $V\stm S_{\e,D}$.
\end{observation}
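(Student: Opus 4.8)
The plan is to derive a contradiction from a single improving \emph{$2$-opt} exchange, and in fact the near-straightness supplied by Observation~\ref{2meansstraight} will not even be needed. Suppose for contradiction that a shortest tour $\cL$ had at least three pairs of edges joining $S_{\e,D}$ to $V\stm S_{\e,D}$, and fix three of them. Each chosen pair contributes one ``incoming'' edge $\{p_i,q_i\}$ ($i=1,2,3$), where $q_i\in S_{\e,D}$ is the first vertex of a maximal run of $\cL$ inside $S_{\e,D}$ and $p_i\in V\stm S_{\e,D}$ is its predecessor on the tour; the $q_i$ lie in three distinct runs, hence are distinct, and one checks similarly that the $p_i$ are distinct. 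I would then fix any reference point $c\in S_{\e,D}$ and look at the three unit vectors $\hat p_i=(p_i-c)/\de{p_i}{c}$: since $0\le\de{\hat p_1+\hat p_2+\hat p_3}{0}^2=3+2\sum_{i<j}\langle\hat p_i,\hat p_j\rangle$, at least one inner product is $\ge-\tfrac12$, so after relabelling we may assume $\langle\hat p_1,\hat p_2\rangle\ge-\tfrac12$, i.e.\ $p_1$ and $p_2$ do not point in nearly opposite directions from $S_{\e,D}$.

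Next I would perform the $2$-opt move that deletes $\{p_1,q_1\}$ and $\{p_2,q_2\}$ from $\cL$ and reconnects the two resulting arcs by adding $\{q_1,q_2\}$ and $\{p_1,p_2\}$. The one point requiring care is connectivity: deleting two edges of the cycle $\cL$ leaves exactly two arcs, and tracing the cyclic order (with the labelling chosen so that the run entered by $\{p_1,q_1\}$ precedes the one entered by $\{p_2,q_2\}$) shows these arcs have endpoint sets $\{q_1,p_2\}$ and $\{q_2,p_1\}$ — regardless of where the third run sits — so the pairing $\{q_1,q_2\},\{p_1,p_2\}$ is precisely the nontrivial way to splice them back into a single Hamilton cycle. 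For the length estimate, write $\rho_i=\de{p_i}{c}>D$ and $\delta=\diam(S_{\e,D})\le\diam(S)+2\e$ (a fixed constant). Then $\de{p_i}{q_i}\ge\rho_i-\delta$, $\de{q_1}{q_2}\le\delta$, and the bound $\langle\hat p_1,\hat p_2\rangle\ge-\tfrac12$ yields $\de{p_1}{p_2}\le\sqrt{\rho_1^2+\rho_2^2+\rho_1\rho_2}$; a routine computation (rationalising $\rho_1+\rho_2-\sqrt{\rho_1^2+\rho_2^2+\rho_1\rho_2}$) then shows the tour length strictly decreases, by at least $\tfrac14\min(\rho_1,\rho_2)-3\delta>\tfrac14 D-3\delta$, which is positive once $D$ is large relative to the fixed $\delta$. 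This contradicts the optimality of $\cL$, and the Observation follows.

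I expect the only genuinely delicate part to be the connectivity bookkeeping in the second paragraph — keeping track of the cyclic order so that the $2$-opt reassembles one tour rather than two — together with the preliminary combinatorial remark that among three unit vectors some pair is non-antipodal (the reason exactly three crossing pairs, rather than two, is what breaks the optimal tour). Everything after that is elementary. As an alternative one could instead invoke Observation~\ref{2meansstraight} to make all pairs nearly straight and then apply Lemma~\ref{o.4points} to two of them, but the $2$-opt argument above seems cleaner and avoids that machinery.
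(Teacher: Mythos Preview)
Your argument is correct, and it is genuinely different from the paper's. The paper assumes three pairs $(e_1,e_2),(f_1,f_2),(g_1,g_2)$, deletes all six crossing edges, and rebuilds the tour by shortcutting two of the outside pairs and routing a fresh Hamilton path through $S_{\e,D}$ between the remaining two outside endpoints; to control the cost it then invokes Observation~\ref{2meansstraight} (near-straightness forces antipodality) and performs a small case analysis to find a nonantipodal outside pair to shortcut. You instead touch only two edges: the pigeonhole estimate $\|\hat p_1+\hat p_2+\hat p_3\|^2\ge 0$ directly produces two incoming edges whose far endpoints subtend angle $\le 2\pi/3$ at $S_{\e,D}$, and a single $2$-opt on those edges already shortens the tour by $\tfrac14 D-O(1)$. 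Your route is shorter and more elementary --- it removes the dependence on Observation~\ref{2meansstraight} and avoids any case analysis --- while the paper's construction, by rerouting through all of $S_{\e,D}$, is closer in spirit to the local surgeries used later in the proof of Theorem~\ref{t.hk}. Your bookkeeping for the $2$-opt (that with the chosen orientation the two arcs have endpoint sets $\{q_1,p_2\}$ and $\{q_2,p_1\}$, so $\{q_1,q_2\},\{p_1,p_2\}$ is exactly the nontrivial splice) is right, and the length computation checks out.
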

\begin{proof} 
Let $\cL$ denote a shortest TSP tour, and suppose there are three pairs $(e_1,e_2)$ $(f_1,f_2)$ and $(g_1,g_2)$ of edges in $\cL$ between $S_{\e,D}$ and $V'=V\stm S_{\e,D}$.  We let $x_1,x_2,y_1,y_2,z_1,z_2$ denote the endpoints of the edges $e_1,e_2,f_1,f_2,g_1,g_2$, respectively, which lie in $V'$, and we suppose, without loss of generality, that the pairs $x_1,y_1$, $y_2,z_2$, and $z_1,x_2$, respectively, are joined by paths in $V''\cap \cL$, for $V''=V\setminus \{x_1,x_2,y_1,y_2,z_1,z_2\}$.

We now modify $\cL$ as follows.
\begin{enumerate}
\item  We remove the edges $e_1,e_2,f_1,f_2,g_1,g_2$.
\item We add new edges between the pairs $(x_2,y_2)$ and $(y_1,z_1)$.
\item \label{p.newpath} We add a path which travels from $x_1$ to the set $S_{\e,D}$ (visiting every vertex of the set) and then to $z_2$.
\end{enumerate}
Observe that the result is a new TSP tour; it remains to estimate the change in cost.  The path $P$ added in part \ref{p.newpath} has Euclidean length $\ell(P)$ at most $\ell(e_1-g_2)+C_{S,\e,D}$, where we are viewing the edges as vectors from their point in $V''$ to their point in $S_{\e,D}$, and where $C_{S,\e}$ is a constant depending only on $S,\e$.  (For example, we can take $C_{S,\e}=\tsp(S_{\e,D})+\diam(S).$)

Similarly, the edge $(x_2,y_2)$ has length at most $\ell(e_2-f_2)+C_{S,\e,D}$, and the edge $(y_1,z_1)$ has length at most $\ell(f_1-g_1)+C_{S,\e,D}$.  Applying the triangle inequality to the three lengths immediately gives that our new tour has length at most $\tsp(\xdn)+3C_{S,\e,D}$.  In fact, we should be hoping to do better: If $\ell(e_2-f_2)$ is within a constant of $\ell(e_2)+\ell(f_2)$ as $D$ grows large, then the points $x_2,y_2$ are constrained to be nearly antipodal about center $c$ of the set $S_{\e,D}$ (with angle tending to $\pi$ as $D\to \infty$).  Similarly, we have that $\ell(f_1-g_1)$ is far from $\ell(f_1)+\ell(g_1)$ unless $y_1,z_1$ are nearly antipodal.  

Thus if we have not achieved a contradiction by shortening the tour, then, $x_2,y_2$ are nearly antipodal, and $y_1,z_1$ are nearly antipodal.   Observation \ref{2meansstraight} means that the pairs $\{x_1,x_2\}$, $\{y_1,y_2\}$, and $\{z_1,z_2\}$ are nearly antipodal as well.  Thus, in particular, $x_1$ and $z_1$ are \emph{not} nearly antipodal, and so we can produce a shortening of the original TSP tour by instead:
 \begin{enumerate}
\item Removing the edges $e_1,e_2,f_1,f_2,g_1,g_2$;
\item Adding new edges between the pairs $(x_1,z_1)$ and $(x_2,z_2)$;
\item Adding a path which travels from $y_1$ to the set $S_{\e,D}$ (visiting every vertex of the set) and then to $y_2$.\qedhere
\end{enumerate}
\end{proof}

We now consider the $d=2$ case of Theorem \ref{t.hk}.  We let $S^k$ be a set consisting of $k$ equally spaced points on a unit circle centered at the origin, $2k$ equally spaced points at the radius 4 circle centered at the origin, and the points $(2,0)$ and $(-2,0)$ (See Figure \ref{f.hkG}.  The particular ratios $2k:k$ and $4:2:1$ are chosen just to make a clean figure.).  We will argue that if $\e$ is sufficiently small and $D,k$ are sufficiently large, each instance of an $(\e,D)$ copy of $S^k$ allows us to locally modify an instance of the TSP so that it is still a solution to the $\hk$ linear program, but is shorter by some additive constant.

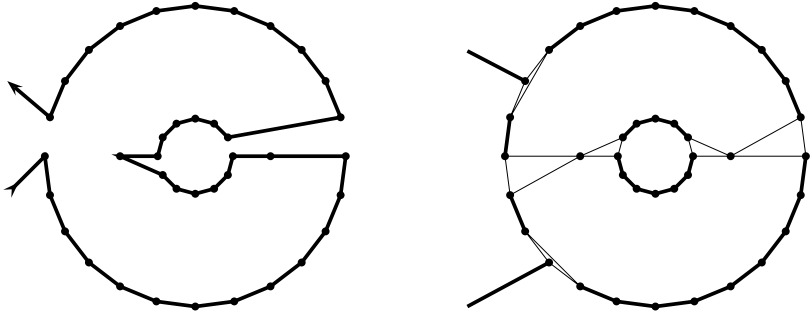
\begin{figure}[t]
\begin{center}
\begin{pdfpic}
\psset{unit=.5cm,dotsize=3pt,linewidth=1.4pt}
\begin{pspicture}(-5,-4)(5,4)
\SpecialCoor
\psdot(1;0)
\psdot(1;30)
\psdot(1;60)
\psdot(1;90)
\psdot(1;120)
\psdot(1;150)
\psdot(1;180)
\psdot(1;210)
\psdot(1;240)
\psdot(1;270)
\psdot(1;300)
\psdot(1;330)

\psdot(2;0)
\psdot(-2,0)

\psdot(4;0)
\psdot(4;30)
\psdot(4;60)
\psdot(4;90)
\psdot(4;120)
\psdot(4;150)
\psdot(4;180)
\psdot(4;210)
\psdot(4;240)
\psdot(4;270)
\psdot(4;300)
\psdot(4;330)

\psdot(4;15)
\psdot(4;45)
\psdot(4;75)
\psdot(4;105)
\psdot(4;135)
\psdot(4;165)
\psdot(4;195)
\psdot(4;225)
\psdot(4;255)
\psdot(4;285)
\psdot(4;315)
\psdot(4;345)

\psline{>->}(-5,-1)(4;180)(4;195)(4;210)(4;225)(4;240)(4;255)(4;270)(4;285)(4;300)(4;315)(4;330)(4;345)(4;0)(2;0)(1;0)(1;330)(1;300)(1;270)(1;240)(1;210)(2;180)(1;180)(1;150)(1;120)(1;90)(1;60)(1;30)(4;15)(4;30)(4;45)(4;60)(4;75)(4;90)(4;105)(4;120)(4;135)(4;150)(4;165)(-5,2)

\end{pspicture}\hspace{1cm}
\begin{pspicture}(-5,-4)(5,4)
\SpecialCoor
\psdot(1;0)
\psdot(1;30)
\psdot(1;60)
\psdot(1;90)
\psdot(1;120)
\psdot(1;150)
\psdot(1;180)
\psdot(1;210)
\psdot(1;240)
\psdot(1;270)
\psdot(1;300)
\psdot(1;330)

\psdot(2;0)
\psdot(-2,0)

\psdot(4;0)
\psdot(4;30)
\psdot(4;60)
\psdot(4;90)
\psdot(4;120)
\psdot(4;150)
\psdot(4;180)
\psdot(4;210)
\psdot(4;240)
\psdot(4;270)
\psdot(4;300)
\psdot(4;330)

\psline[linewidth=.2pt](4;135)(4;150)(4;165)(4;135)
\psline(4;165)(4;180)
\psline[linewidth=.2pt](4;195)(2;180)(4;180)(4;195)
\psline[linewidth=.2pt](2;180)(1;180)(1;150)(2;180)
\psline(4;195)(4;210)
\psline[linewidth=.2pt](4;210)(4;240)(4;225)(4;210)
\psline[linewidth=.2pt](2;0)(1;30)(1;0)(2;0)(4;15)(4;0)(2;0)

\psline(4;135)(4;120)(4;105)(4;90)(4;75)(4;60)(4;45)(4;30)(4;15)
\psline(4;0)(4;345)(4;330)(4;315)(4;300)(4;285)(4;270)(4;255)(4;240)

\psline(1;0)(1;330)(1;300)(1;270)(1;240)(1;210)(1;180)
\psline(1;150)(1;120)(1;90)(1;60)(1;30)

\psline(4;150)(-5,2.8)
\psline(4;225)(-5,-4)

\psdot(4;15)
\psdot(4;45)
\psdot(4;75)
\psdot(4;105)
\psdot(4;135)
\psdot(4;165)
\psdot(4;195)
\psdot(4;225)
\psdot(4;255)
\psdot(4;285)
\psdot(4;315)
\psdot(4;345)
\end{pspicture}
\end{pdfpic}
\end{center}
\caption{\label{f.hkG} The set $S^{12}$.  Such sets are good for the HK-bound, but bad for the tour. (Thick lines have weight $1$, while thin lines have weight $\frac 1 2$.)} 
\end{figure}

To this end, let $\cL$ be some shortest tour, and $S^k_{\e,D}$ be an $(\e,D)$-copy of $S^k$.  Observation \ref{nothree} implies that $\cL$ enters (and leaves) $S^k_{\e,D}$ either once or twice, for sufficiently large $D$.
\subsubsection*{Case 1: The tour enters $S^k_{\e,D}$ once}
We let $x_1,x_2$ denote the vertices in $V'=V\stm S^k_{\e,D}$ which are adjacent to vertices in $S^k_{\e,D}$.  We let $\cL^\circ$ be the portion of $\cL$ between $x_1$ and $x_2$.  Then we have that 
\begin{equation}\label{case1cost}
\ell(\cL^\circ)\geq \dist(x_1,S^k_{\e,D}) + \dist(S^k_{\e,D},x_2) + 10\pi+6+K-o(1),
\end{equation} where we are using $o(1)$ to denote a function which tends to 0 as $\e\to 0$, $k\to \infty$, and $D\to \infty$ simultaneously, and $K$ is an absolute constant (in fact, $K$ can be 2).  To see this lower bound, observe that the tour must cover the outer circle ($\approx 8\pi$), the inner circle ($\approx 2\pi$), and must cross the gap between the inner and outer circles twice ($2\cdot 3=6$).  Finally, the tour must also spend more (bounded below by some constant $K$) to cover one of the two ``gap'' vertices in $S^k_{\e,D}$.  To see that the tour can not cover both gap vertices in gap crossings while crossing the gap only twice, let $a_1,a_2$ denote the first and last vertices of $\cL^\circ$ on the inner circle.  Either $a_1,a_2$ lie at the two ends of the inner circle close to the gap vertices, in which case the tour spends an additive constant $K$ more than $2\pi-o(1)$ to cover the entire inner circle, or, say, $a_1$ lies at distance $1+K$ from the gap vertex to which it is joined, incurring an extra cost $K$ again.

\smallskip

We now modify the portion $\cL^\circ$ so that the result is still a solution to the Held-Karp LP, but is shorter by some additive constant.  We let $y_1,y_2$ and $z_1,z_2$ be pairs of points on the outer circle which are closest to the gap vertices $g_1$ and $g_2$, respectively, and similarly let $a_1,a_2$ and $b_1,b_2$ be points of the inner circle which are closest to the first and second gap vertex, respectively.  We join all pairs among each of the triples $y_1,y_2,g_1$, $a_1,a_2,g_1$, $z_1,z_2,g_2$, $b_1,b_2,g_2$ by edges of weight $\frac 1 2$ (Figure \ref{f.hkG}).  Next, we let $\a_1,\a_2,\a_3$ be a consecutive triple of points on the outer circle which is as close as possible to $x_1$ (but disjoint from the set $\{y_1,y_2,z_1,z_2\}$), and $\b_1,\b_2,\b_3$ be a consecutive triple of points on the outer circle which is as close as possible to $x_2$ (but disjoint from the set $\{y_1,y_2,z_1,z_2,\a_1,\a_2,\a_3\}$).  We join all pairs among each of the triples $\a_1,\a_2,\a_3$ and $\b_1,\b_2,\b_3$ with edges of weight $\frac 1 2$.  We join $\a_2$ and $\b_2$ to $x_1$ and $x_2$, respectively, by edges of weight 1.   Finally, using edges of weight 1, we join all consecutive pairs of points on each circle which were not already joined (by edges of weight $\frac 1 2$).  The result for $S^{12}$ is shown on the right-hand side of Figure \ref{f.hkG}.  As $k,D$ grow large and $\e$ grows small, the total cost of this is
\[
\dist(x_1,S^k_{\e,D}) + \dist(S^k_{\e,D},x_2) + 10\pi+6+o(1),
\]
thus it suffices to check that what we have given is indeed a valid instance of the Held-Karp LP.  We carry out this verification below, after describing the second case of the construction.

\subsubsection*{Case 2: The tour enters $S^k_{\e,D}$ twice}
We let $x^1_1,x^1_2$ and $x_1^2,x_2^2$ denote the two pairs of entry/exit points in $V'=V\stm S^k_{\e,D}$.  We let $\cL^1$ be the portion of $\cL$ between $x^1_1$ and $x^2_2$, $\cL^2$ be the portion between $x^2_1$ and $x^2_2$, and let $\cL^\circ=\cL^1\cup \cL^2$.  Then we have that
\begin{equation}\label{case2cost}
\ell(\cL^\circ)\geq \sum_{\substack{i=1,2\\j=1,2}}\dist(x^i_j,S^k_{\e,D}) +10\pi+6+K-o(1),
\end{equation}
 where $K$ is again an absolute constant (and where again, in fact, $K$ can be 2).  Again, $10\pi+6$ is needed to cover both circles, and transition to the inner circle and back.  If both $\cL^1$ and $\cL^2$ visit the inner circle this gives an extra cost of $\approx 6$ for the transitions, so we assume that $\cL^1$ is the only portion to visit the inner circle.  But the argument from the previous case shows that $\cL^1$ cannot cover the entire inner circle and visit both gap vertices without incurring an additive constant extra cost.

\begin{figure}[t]
\begin{center}
\begin{pdfpic}
\psset{unit=.5cm,dotsize=3pt,linewidth=1.4pt}
\begin{pspicture}(-5,-4)(5,4)
\SpecialCoor
\psdot(1;0)
\psdot(1;30)
\psdot(1;60)
\psdot(1;90)
\psdot(1;120)
\psdot(1;150)
\psdot(1;180)
\psdot(1;210)
\psdot(1;240)
\psdot(1;270)
\psdot(1;300)
\psdot(1;330)

\psdot(2;0)
\psdot(-2,0)

\psdot(4;0)
\psdot(4;30)
\psdot(4;60)
\psdot(4;90)
\psdot(4;120)
\psdot(4;150)
\psdot(4;180)
\psdot(4;210)
\psdot(4;240)
\psdot(4;270)
\psdot(4;300)
\psdot(4;330)

\psdot(4;15)
\psdot(4;45)
\psdot(4;75)
\psdot(4;105)
\psdot(4;135)
\psdot(4;165)
\psdot(4;195)
\psdot(4;225)
\psdot(4;255)
\psdot(4;285)
\psdot(4;315)
\psdot(4;345)

\psline{>-}(-5,-.5)(4;180)(4;195)(4;210)(4;225)(4;240)(4;255)(4;270)(4;285)(4;300)(4;315)(4;330)(4;345)(4;0)(2;0)(1;0)(1;330)(1;300)(1;270)(1;240)(1;210)(2;180)
\psline{->}(2;180)(1;180)(1;150)(1;120)(1;90)(1;60)(1;30)(4;15)(5;12)

\psline{>->}(5,2.5)(4;30)(4;45)(4;60)(4;75)(4;90)(4;105)(4;120)(4;135)(4;150)(4;165)(-5,1)

\end{pspicture}\hspace{1cm}
\begin{pspicture}(-5,-4)(5,4)
\SpecialCoor
\psdot(1;0)
\psdot(1;30)
\psdot(1;60)
\psdot(1;90)
\psdot(1;120)
\psdot(1;150)
\psdot(1;180)
\psdot(1;210)
\psdot(1;240)
\psdot(1;270)
\psdot(1;300)
\psdot(1;330)

\psdot(2;0)
\psdot(-2,0)

\psdot(4;0)
\psdot(4;30)
\psdot(4;60)
\psdot(4;90)
\psdot(4;120)
\psdot(4;150)
\psdot(4;180)
\psdot(4;210)
\psdot(4;240)
\psdot(4;270)
\psdot(4;300)
\psdot(4;330)

\psline[linewidth=.2pt](4;135)(4;150)(4;165)(4;135)
\psline(4;165)(4;180)
\psline[linewidth=.2pt](4;195)(2;180)(4;180)(4;195)
\psline[linewidth=.2pt](2;180)(1;180)(1;150)(2;180)
\psline(4;195)(4;210)
\psline[linewidth=.2pt](4;210)(4;240)(4;225)(4;210)
\psline[linewidth=.2pt](2;0)(1;30)(1;0)(2;0)(4;15)(4;0)(2;0)

\psline[linewidth=.2pt](4;30)(4;45)(4;60)(4;30)
\psline[linewidth=.2pt](4;-15)(4;-30)(4;-45)(4;-15)

\psline(4;135)(4;120)(4;105)(4;90)(4;75)(4;60)
\psline(4;30)(4;15)
\psline(4;0)(4;345)
\psline(4;315)(4;300)(4;285)(4;270)(4;255)(4;240)

\psline(4;330)(5,-2)
\psline(4;45)(5,3.5)

\psline(1;0)(1;330)(1;300)(1;270)(1;240)(1;210)(1;180)
\psline(1;150)(1;120)(1;90)(1;60)(1;30)

\psline(4;150)(-5,2)
\psline(4;225)(-5,-4)

\psdot(4;15)
\psdot(4;45)
\psdot(4;75)
\psdot(4;105)
\psdot(4;135)
\psdot(4;165)
\psdot(4;195)
\psdot(4;225)
\psdot(4;255)
\psdot(4;285)
\psdot(4;315)
\psdot(4;345)
\end{pspicture}
\end{pdfpic}
\end{center}
\caption{\label{f.hkG2} An example of the set $S^{12}$ with two passes of a tour. (Thick lines have weight $1$, while thin lines have weight $\frac 1 2$.)} 
\end{figure}
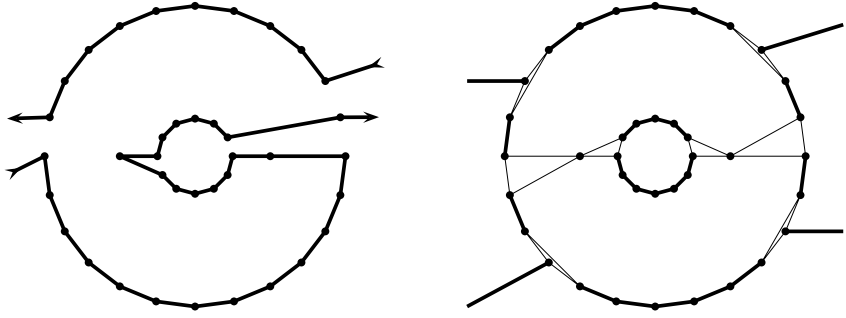

We now modify the portion $\cL^\circ$ so that the result is still a solution to the Held-Karp LP, as follows.  We still let $y_1,y_2$ and $z_1,z_2$ be pairs of points on the outer circle which are closest to the gap vertices $g_1$ and $g_2$, respectively, and similarly let $a_1,a_2$ and $b_1,b_2$ be points of the inner circle which are closest to the first and second gap vertex, respectively.  We join all pairs among each of the triples $y_1,y_2,g_1$, $a_1,a_2,g_1$, $z_1,z_2,g_2$, $b_1,b_2,g_2$ by edges of weight $\frac 1 2$.  Next, we let $\a^i_1,\a^i_2,\a^i_3$ be $\b^i_1,\b^i_2,\b^i_3$ be consecutive triples of points on the outer circle which are close to the points $x^i_1$ and $x^i_2$, respectively.  (All named points must be distinct)  We join all pairs among each of the triples $\a^i_1,\a^i_2,\a^i_3$ and $\b^i_1,\b^i_2,\b^i_3$ with edges of weight $\frac 1 2$ for $i=1,2$.  We join $\a^i_2$ and $\b^i_2$ to $x^i_1$ and $x^i_2$, respectively, for $i=1,2$, by edges of weight 1.   Finally, using edges of weight 1, we join all consecutive pairs of points on each circle which were not already joined (by edges of weight $\frac 1 2$).  The result for $S^{12}$ is shown on the right-hand side of Figure \ref{f.hkG2}.  As $k,D$ grow large and $\e$ grows small, the total cost of this is
\[
\sum_{\substack{i=1,2\\j=1,2}}\dist(x^i_j,S^k_{\e,D}) +10\pi+6+o(1),
\]
and so we have improved the length by an additive constant.

\subsubsection*{Feasibility of the solutions}
We now check that making many local modifications according to the cases above does not disturb the property that $\cL$ is a feasible instance of the Held-Karp LP.  It is immediate that all degree-weights $\sum_{j\neq i}x_{\{ij\}}$ are 2; it remains to check the condition that 
\begin{equation}\label{HKcycle}
(\forall \empt \subsetneq S\subsetneq V)\: \sum_{\{i,j\}\sbs S} x_{\{ij\}}\leq |S|-1.
\end{equation}
In other words, the total weight of edges in any proper nonempty subset is at least one less than the number of vertices.

Since the degree-weights are 2 at every vertex, we can show \eqref{HKcycle} by showing that any proper nonempty subset $S$ has the property that edges of total weight at least 2 leave the subset; i.e., that 
\begin{equation}\label{twoleave}
(\forall \empt \subsetneq S\subsetneq V)\:\: \sum_{\substack{i\in S\\j\not\in S}} x_{\{ij\}}\geq 2.
\end{equation}
If this fails, there is a cut in the graph of weight $<2$.  First we consider the possibility that the cut includes an edge of weight $\frac 1 2$.  Such edges only occur in triangles, and a minimum cut in a graph cannot contain exactly one edge of any triangle.  But this already implies that no cut of weight $<2$ in our graph can include any edges of weight $\frac 1 2$, since deleting even all the edges of any one triangle does not disconnect our graph, and since 4 such edges, or 2 such edges plus an edge of weight 1, already gives weight 2.

But now we are done: since no minimum cut of weight $<2$ can include an edge of weight $\frac 1 2$ (and all other edges have weight 1), it suffices to note that there is no single cut edge in our modified graph.  This completes the proof for the case $d=2$.

\subsubsection*{The case $d>2$}  We cannot use exactly the same point set in higher dimensions.  The only trouble with the previous argument is the lower bound on the tour length.  In particular, if an edge enters a set $S^k$ at a sharp angle to the 2-D plane containing $S^k$, then it may join to an inner circle vertex or gap vertex at negligible (or zero) extra cost over entering at the nearest possible point, and this is not accounted for in \eqref{case1cost} or \eqref{case2cost}.  Our goal now is to create a set out of many copies of $S^k$, in a way that allows us to be certain that some copy of $S^k$ must be incident only with edges nearly parallel to its containing plane, in any optimal tour.

We will let $S_{\e,D,R}^{k,\ell}$ be an $(\e,D)$ copy of a certain elaborate set $S_R^{k,\ell}$.  The set $S_R^{k,\ell}$ consists of $\ell$ copies of $S^k$.  Each copy of $S^k$ lies in a 2-dimensional hyperplane normal to some unit vector $v$, and we orient the copies of $S^k$ so that these hyperplanes are tangent to the $(d-1)$-sphere $S_R$ of radius $R$.  Moreover, we ensure that the centers $c_1,\dots,c_\ell$ lie on $S_R$, and are roughly ``evenly spaced'' in the sense that as $\ell$ grows large, we can ensure that for any given unit vector $x\in S_1$, we have that $x$ lies arbitrarily close to $\frac 1 R c_i$ for one of the $c_i$'s.

We now appeal to the following fact:
\begin{lemma}
If $X$ is a subset of the unit $(d-1)$-sphere $S_1$ in $\Re^d$ of cardinality $n$, then $\tsp(X)=o(n)$.
\end{lemma}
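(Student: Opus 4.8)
The plan is to exploit the fact that $S_1$ is a bounded $(d-1)$-dimensional surface, so that $n$ points on it can be gathered into a bounded (in $n$) number of clusters of mutually nearby points, each of which is cheap to traverse, and the clusters can then be stitched together at $O(1)$ cost. In other words, I would give an explicit, wasteful tour and show that it already has length $o(n)$.

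Concretely, I would first fix a scale $\d>0$ and cover $S_1$ by $N=N(\d)$ Euclidean balls of radius $\d$. The one quantitative input is that $N(\d)=O(\d^{-(d-1)})$, reflecting that $S_1$ is $(d-1)$-dimensional: this follows, for instance, from the fact that the $\d$-tube around the compact hypersurface $S_1$ has volume $O(\d)$, or directly by taking a maximal $\d$-separated subset $P\sbs S_1$, noting that the disjoint balls of radius $\d/2$ about points of $P$ each meet $S_1$ in a patch of surface area $\Omega(\d^{d-1})$, so $|P|=O(\d^{-(d-1)})$, while the concentric balls of radius $\d$ cover $S_1$ by maximality. Having fixed such a cover, I assign each point of $X$ to a ball containing it, producing groups $X_1,\dots,X_N$ of sizes $n_1,\dots,n_N$ with $\sum_j n_j=n$.

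Next, within each group $X_j$ I join the $n_j$ points by a path in arbitrary order; since all points of $X_j$ lie in a common ball of radius $\d$, each edge of this path has length at most $2\d$, so the path has length at most $2\d(n_j-1)\le 2\d n_j$. Then I stitch the at most $N$ nonempty paths into a single cycle through $X$ using at most $N$ additional edges, each of length at most $\diam(S_1)=2$. This yields a tour of total length at most $\sum_j 2\d n_j + 2N = 2\d n + O(\d^{-(d-1)})$. Since $\d>0$ was arbitrary, letting $n\to\infty$ with $\d$ fixed gives $\limsup_n \tsp(X)/n\le 2\d$ for every $\d$, hence $\tsp(X)=o(n)$; alternatively, choosing $\d=n^{-1/d}$ already gives the explicit bound $\tsp(X)=O(n^{(d-1)/d})$.

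I do not expect a genuine obstacle here. The only real ingredient is the covering-number estimate $N(\d)=O(\d^{-(d-1)})$, which is classical, and the only point needing care is that the additive term $O(\d^{-(d-1)})$ must not be allowed to dominate $2\d n$ when $n$ is small — which is precisely why one sends $\d\to 0$ slowly with $n$ (or takes the limit in $n$ before shrinking $\d$). If a sharper rate were wanted, one could instead cover $S_1$ by finitely many bi-Lipschitz coordinate patches, apply the worst-case Euclidean TSP bound of Few~\cite{Few} and Toth~\cite{Toth} in $\Re^{d-1}$ inside each patch, and combine the patch costs via a power-mean (concavity) inequality to get $\tsp(X)=O\!\left(n^{(d-2)/(d-1)}\right)$; but the stated $o(n)$ is all that is needed in the sequel.
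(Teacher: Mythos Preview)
Your proof is correct and self-contained, but it takes a different route from the paper. The paper's argument is a one-liner: since $S_1$ is bounded, rescale and embed it in $[0,1]^d$, then cite the worst-case Euclidean TSP bound of Few and Toth, which already gives $\tsp(X)\leq C_d n^{(d-1)/d}$ for any $n$ points in the cube. Your covering argument instead reproves (a crude form of) that bound directly on the sphere, using only the $(d-1)$-dimensional covering-number estimate $N(\d)=O(\d^{-(d-1)})$ and an elementary stitching. What you gain is self-containment and a transparent use of the $(d-1)$-dimensionality of $S_1$; what the paper gains is brevity. Both arrive at $O(n^{(d-1)/d})$, and both note (as you do at the end) that a sharper $O(n^{(d-2)/(d-1)})$ is available via charts and the $(d-1)$-dimensional Few bound, but unnecessary here.
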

\begin{proof}
We can rescale the sphere and embed it in the hypercube $[0,1]^d$, and then appeal to the fact (see e.g., \cite{Few,S-Worst}) that the \emph{worst-case} length of a tour through $n$ points in $[0,1]^d$ is $\leq C_d n^{\frac {d-1}{d}}$ for some constant $C_d$.  (It is also easy to obtain a bound $C'_dn^{\frac {d-2}{d-1}}$ for the sphere, but this is unnecessary for us.)
\end{proof}
In particular, this lemma implies that if we take $R\gg \ell\gg k$, then typical edges joining pairs of instances of (approximate) $S^k$'s in the set $S_{\e,D,R}^{k,\ell}$ are of length $o(R)$ (where the asymptotics are as $R\to \infty$).  Furthermore, Observation \ref{nothree} implies that the tour enters $S_{\e,D,R}^{k,\ell}$ at most twice, and also that it enters each $S^k$ at most twice.  In particular, for sufficient choices $R\gg \ell\gg k$, there will necessarily be at least one set $X$, which is an approximate $S^k$ in the set $S_{\e,D,R}^{k,\ell}$ such that
\begin{enumerate}
\item All edges entering/leaving $X$ go to other points of $S_{\e,D,R}^{k,\ell}$, and
\item All such edges have length $o(R)$.
\end{enumerate}
In particular, the edges entering and leaving $X$ will be nearly parallel to the 2-dimensional hyperplane in which $X$ (approximately) lies.  This is because 
\begin{enumerate}[(a)]
\item  the portion of the sphere within $o(R)$ of $X$ will lie within a small angular distance of $X$, and moreover,
\item the portion of the $\e$-neighborhood of the sphere within distance $o(R)$ of $X$ but also at distance at least some large constant from $X$ is also within a small angular distance of $X$.  (Observe that points in $S_{\e,D,R}^{k,\ell}\setminus X$ are a large constant distance from $X$ by our choice of $R\gg \ell$.)
\end{enumerate}

But this is now sufficient to ensure that we can modify the tour in $X$ (according to Case 1 or 2 from above) to obtain a constant additive improvement.\qed

\section{The 2-factor limit}\label{s.lim}
Here we prove Theorem \ref{t.2flimit}: $\lim_{g\to \infty} \btfg^d=\btsp^d$.
We will continue to work with $\cY_n$ as above. We divide $[0,t]^d$ into $m^d=n/L^d$ subcubes $S_\a,\a\in [m]^d$ of sidelength $L$, for some sufficiently large constant $L>0$.

 With each cube $S_\a$ we associate the $2^d$ quadrants $Q_{\a,j},j=1,2,\ldots,2^d$, whose origin is the center $s_\a$ of $S_\a$. 
We call the quadrant $Q_{a,j}$ \emph{trivial} if the quadrant intersects $[0,t]^d$ in a unit cube (in which case $S_\a$ is one of the $2^d$ corner cubes in the decomposition). 
Then for a non-negative integer $r$, we let $Q_{\a,j,r}$ denote the cubes in $Q_{\a,j}$ whose centers are at distance at most $rL$ from $s_\a$; for convenience, we call $Q_{\a,j,r}$ trivial (resp. nontrivial) whenever $Q_{\a,j}$ is, regardless of the choice of $r$.

If $Q_{\a,j,r}\subseteq [0,t]^d$ is nontrivial and $Y_{\a,j,r}$ is the number of points of $\cY_n$ that are in $Q_{\a,j,r}$ then $Y_{\a,j,r}$ is a binomial random variable with mean 
$$\a_drL\leq \E Y_{\a,j,r}\leq \b_d(rL)^d$$ 
for some constants $\a_d,\b_d>0$. Note that, away from the boundary cubes of the decomposition of $[0,t]^d$ we can use $(rL)^d$ in place of $rL$ for the lower bound, but in the worst-case, we have to reduce the exponent. We can therefore write 
\beq{xx1}
\Pr(Y_{\a,j,r}=0)\leq e^{-\g_drL}
\eeq
for some $\g_d>0$. 

Next let $\n_r$ denote the number of subcubes $S_\a$ for which there exists $j,r$ such that $Q_{a,j}$ is nontrivial, $Q_{\a,j,r}\subseteq [0,t]^d$, and $Y_{\a,j,r}=0$. Then 
\beq{den1}
\E \n_r\leq n2^de^{-\g_drL},
\eeq 
for some $\g_d>0$. We deduce from the above that 
\beq{xx2}
\n_r=0\text{ for }r\ge r_0= \frac{2}{L}(\g_d^{-1}(\log n+d\log 2).
\eeq
Now $\n_r$ is determined by $n$ independent choices for the points in $\cY_n$. Changing one point changes $\n_r$ by at most $\d_dr^d$ for some $\d_d>0$. Applying McDiarmid's inequality we see that if $t>0$ and $r<r_0$ then
\beq{den2}
\Pr(\n_r\geq \E \n_r+t)\leq \exp\set{-\frac{t^2}{n\d_d^2r^{2d}}}.
\eeq
If $\E\n_r\geq n^{2/3}$ then \eqref{den1} implies that $r\leq \frac{\log n+3d\log 2}{3\g_dL}\leq \frac{1}{10d}\log n$ for $L$ sufficiently large. It follows from \eqref{den2} with $t=\E\n_r$ that 
\beq{xx3}
\n_r\leq 2\E \n_r\text{ a.s. if }\E \n_r\geq n^{2/3}.
\eeq
When $\E Z\leq n^{2/3}$ we use \eqref{mcd1} with $\a=n^{3/4}/(\E \n_r)$ and $c=\d_dr^{d}=\log^{O(1)}n$ to obtain
\beq{xx4}
\n_r\leq n^{3/4}\text{ a.s. if }\E \n_r\leq n^{2/3}.
\eeq

Now suppose that $C_1,C_2,\ldots,C_M$ are the cycles of a minimum cost 2-factor, where $|C_i|\geq g$ for $i=1,2,\ldots,M$. Suppose first there exist $i,j$ such that there exist $S_p\ni x\in C_i$ and $S_q\ni y\in C_j$ such that $||s_p-s_q||\leq L^2$. Suppose that $(x,x')$ is an edge of $C_i$ and that $(y,y')$ is an edge of $C_j$. Then $||x-y||\leq (L+2d^{1/2})L$ and $||x'-y'||\leq ||x'-x||+||x-y||+||y'-y||$. It follows that if we delete the edges $(x,x'),(y,y')$ from $C_i,C_j$ and add the edges $(x,y),(x',y')$ then we create a single cycle out of the vertices of $C_i\cup C_j$ at a cost of at most $2||x'-y||$. By repeating this where possible, we obtain a new set of cycles $C_1',C_2',\ldots,C_{M'}'$ such that for two distinct cycles $C_i',C_j'$ the set of subcubes visited by $C_i'$ have centers that are distance at least $L^2$ from the centers of the set of subcubes visited by $C_j'$. Furthermore, the increase in cost associated with this merging is at most
\beq{bd1}
\frac{2(L+2d^{1/2})L}{g}n.
\eeq

We continue merging cycles. For $r=L+1\ldots,,r_0$ we try to merge cycles $C,C'$ for which
there is a subcube $S_i$ containing a point of $C$ whose center is within $rL$ of the center of a subcube that contains a point of $C'$. The cost of making these merges can be bounded by
\beq{bd2}
2\sum_{r=L+1}^{r_0}n2^dr^de^{-\g_drL}+n^{3/4}r_0r^d\leq 3n(2L)^de^{-\g_dL^2}
\eeq
for $L$ sufficiently large.

This is because, when we merge two cycles via subcubes at distance $rL$ we are using one of at most $\n_r$ subcubes. Further, for each such subcube there are at most $r^d$ other subcubes at distance $r$.

We argue next that after all of these merges, there can be only one cycle. Suppose that there are two cycles $C,C'$ and let $x\in C,x'\in C'$ be as close as possible. Suppose that $x\in S_a$ and that $x'\in S_b$ where $||s_a-s_b||>r_0$. If this happens then we can find a $Q_{a,j,r_0}$ or a $Q_{b,j,r_0}$ that is empty, contradiction.

It follows from this and \eqref{bd1}, \eqref{bd2} that with $L=g^{2/3}$, that after scaling to $[0,1]^d$ we find that for $g$ sufficiently large,
$$\btsp^d\leq \btfg^d+g^{-1/4}+e^{-\g_dg^{2/3}/2}$$
and this completes the proof of Theorem \ref{t.girthseq}.
\qed
\section{Branch and Bound Algorithms}\label{B&B}
In this section we prove Theorem \ref{bbth}.  Branch-and-bound is a pruning process, which can be used to search for an optimum TSP tour.  Branch-and-bound as we consider here depends on 3 choices:
\begin{enumerate}
\item A choice of heuristic to find (not always optimal) TSP tours;
\item A choice of lower bound for the TSP (such as the 2-factor, HK bound, etc.);
\item A branching strategy (giving a \emph{branch-and-bound} tree).
\end{enumerate}

As an example, we will consider the case where we use some heuristic for the TSP, the 2-factor as a lower bound, and use a branching strategy based on the 2-factor as well.

Given our point-set $\xdn$, we begin by letting $B$ be the value of the tour found by our TSP heuristic. We let $b_{x}$ be the length of the shortest 2-factor in $\xdn$.  Here $x$ represents the root of the branch-and-bound tree, which we will construct iteratively; $\La_x$ is the set of all TSP tours in $\xdn$.   Unless $b_{x}\geq B$, we do not know that $B$ is an optimal tour, so we \emph{branch} in the following way: we choose some cycle $C$ in the 2-factor we have found, and, for the edges $e_1,\dots,e_k$ of $C$, generate $k$ children $x_1,\dots,x_k$ of $x$, letting 
\[
I_{x_1}=\empt, I_{x_2}=\{e_1\}, I_{x_3}=\{e_1,e_2\},\dots
\]
\[
O_{x_1}=\{e_1\},O_{x_2}=\{e_2\},O_{x_3}=\{e_3\},\dots
\]
These are sets of required \emph{inclusions} and \emph{exclusions}, respectively.  In particular, for any vertex $v$ of our tree, $\La_v$ is the set of tours containing all edges in $I_v$ and avoiding all edges in $O_v$.  (For the root, we had $I_x=O_x=\empt$.) Thus, in this example, the vertex $x_2$ corresponds to the set of TSP tours which do contain $e_1$ but do not contain $e_2$.    For each $x_i$, we use our TSP heuristic to find a tour, with the additional constraints that the tour includes all edges in $I_{x_i}$ and excludes those in $O_{x_i}$.  Whenever we find a TSP tour shorter than the current value of $B$, we update $B$.  We also, for each $x_i$, let $b_{x_i}$ be the minimum-length 2-factor subject to the constraints $I_{x_i},O_{x_i}$.  For any $b_{x_i}$ for which $b_{x_i}$ is at least $B$, we know that no shorter tour than $B$ exists subject to $I_{x_i},O_{x_i}$, and the tree is \emph{pruned} below $x_i$, so that $x_i$ becomes a leaf of the pruned branch-and-bound tree.  For other vertices, we continue to branch as above, by adding further constraints to kill some other cycle of the minimum 2-factors found. 

This process terminates when the set $L$ of leaves of the pruned branch-and-bound tree satisfies $v\in L\implies b_v\geq B$; such a tree corresponds to a certificate that the best TSP tour found so far by our heuristic is indeed optimum.  

In general, as a \emph{branching strategy}, we allow any method to produce, given an input weighted graph, a rooted tree (the branch-and-bound tree) labeled with sets $I_v,O_v$ of edges from the graph such that:
\begin{enumerate}
\item When $v$ is a child of $u$, $I_v\supseteq I_u$ and $O_v\supseteq O_u$.
\item If the children of $u$ are $v_1,\dots,v_k$, then we have $\La_{u}=\bigcup_{i=1}^k \La_{v_i}.$
\item \label{enum.leaves} The leaves of the (unpruned) branch-and-bound tree satisfy $|\La_v|=1$.
\end{enumerate}

Following any such branching strategy and pruning when $b_v\geq B$ will eventually lead to a proof that an optimum tour has been found (assuming a reasonable TSP heuristic and lower bound), since, in the worst case, the heuristic and lower bound will match on the leaf $v$ for which $\La_v$ contains just the optimum tour.  For branch-and-bound to be efficient, we would hope that all but polynomially many vertices of the branch-and-bound tree can be pruned because of inequalities $b_v\geq B$.

We can restate Theorem \ref{bbth} more precisely as follows
\begin{theoremR}{bbth}
For any TSP heuristic, any branching strategy, and a lower bound heuristic which is $\,\tf_g$ or $\hk$, the pruned branch-and-bound tree will have $e^{\Om(n/\log^6 n)}$ leaves a.s.
\end{theoremR} 

In particular, in our proof of Theorem \ref{bbth}, we will make the most optimistic assumption regarding the TSP heuristic: we will simply assume that it always returns an optimum tour ($B$ will always be the true value of the minimum TSP).  Theorem \ref{bbth} asserts that even in this case, there can be no polynomially-sized branch-and-bound tree which certifies optimality w.h.p.

One natural strategy to prove that $L$ is large would be to show that each $\La_v$ $(v\in L)$ is small; then, $\La=\bigcup_{v\in L} \La_v$ would give that $L$ must be large.  To show that $\La_v$ is small, one can hope to argue that to have $\lb(\xdn|I_v,O_v)\geq \tsp(\xdn)$, either $I_v$ or $O_v$ must be large, severely restricting the number of tours in $\La_v$.  The problem is that while large $I_v$ does restrict the size of $\La_v$ considerably, having a large $O_v$ can be a rather weak restriction.

We will thus modify this basic approach by paying attention to a special set of tours $\bar \La$.  Given the point set $\xdn$, we will consider the division of $[0,1]^d$ into $s=\frac n {K\log n}$ boxes of sidelength $\left(\frac {K \log n}{n}\right)^{\frac 1 d}$ where $K$ is at least some sufficiently large constant.  $B_1,B_2,\dots,B_s$ denote these boxes, taken in some order such that consecutive terms are adjacent (i.e., sharing a $(d-1)$-dimensional face). Note that
$$|x-y|\leq \sqrt d\left(\frac {K \log n}{n}\right)^{\frac 1 d}\text{ if $x,y$ lie in the same box}.$$

We consider $\xdn=\{x_1,x_2,\dots,x_n\}$, and, for for each $2\leq j\leq s-1$, we let $x_j^1,x_j^2,x_j^3,x_j^4$ denote the four points $x_i\in \xdn\cap B_j$ of smallest index (this particular choice is arbitrary, and is just for definiteness).  We also choose points $x_1^3,x_1^4\in \xdn\cap B_1$ and $x_s^1,x_s^2\in \xdn\cap B_s$, again by simply choosing points of minimum index.  Letting 
\begin{equation}\label{e.I}
\cI=\{x_i^1,x_i^2\mid 1<i\leq s\}\cup \{x_i^3,x_i^4\mid 1\leq i<s\},
\end{equation}
the points in $\cI$ can be viewed as preselected ``interface points'' between the boxes $B_j$.  In particular, we let $\bar \La$ denote the set of TSP tours in $\xdn$ with the properties that, in the tour, 
\begin{enumerate}
\item $x_1^4$ is joined to $x_1^3$ by a path lying entirely in $B_1$;
\item for $1\leq j\leq s-1$, $x_j^3$ and $ x_{j+1}^1$ are adjacent;
\item for $2\leq j\leq s-1$, $x_j^1$ is joined to $x_{j}^3$ by a path lying entirely in $B_j$;
\item $x_s^1$ is joined to $x_s^2$ by a path lying entirely in $B_s$;
\item for $s\geq j\geq 2$, $x_j^2$ and $x_{j-1}^4$ are adjacent; and
\item for $s-1\geq j\geq 2$, $x_j^2$ is joined to $x_{j}^4$ by a path lying entirely in $B_j$.
\end{enumerate}
Note that tours in $\bar \La$ use only edges of length $\leq 2\sqrt d \left(\frac {K \log n}{n}\right)^{\frac 1 d}$.   Instead of using the full strength of the condition $\La=\bigcup_{v\in L} \La_v$, we will use the weaker (but apparently more useful) condition
\[
\bar \La=\bigcup_{v\in L}\bar \La_v
\]
where $\bar \La_v=\bar \La\cap \La_v$.  Intuitively, we are focusing our attention on a restricted set of tours (chosen such that the value of $\lb$ at relevant leaves $v\in L$ with nonempty $\bar \La_v$ will be close to its typical length), and this restricted set $\bar \La$ has the property that the allowable set of edges at each vertex is now small enough that having a large excluded set $O_v$ really will force $\bar \La_v=\bar\La\cap \La_v$ to be small.

Our proof will require us to analyze the performance of $\lb$ conditioned on the exclusions $I_v$ and exclusions $O_v$.   Thus we begin by proving that some simple operations preserve the property of being an feasible instance of the Held-Karp LP.
\begin{lemma}\label{hkextend}
Suppose we are given a feasible instance of the Held-Karp LP on a set $X$ of cardinality $n$.  Then the following operations will all result in feasible instances of the Held-Karp LP:
\begin{enumerate}
\item \label{op.1edge} Downweight all edges by a factor of $(1-\frac{1}{n})$, add a new pair of vertices $y,z$ joined by an edge of weight 1, and join each of $y,z$ to all vertices in $X$ by edges of weight $\frac{1}{n}$.
\item \label{op.indset} For $k<\tfrac n 2$, downweight all edges by a factor of $(1-\frac{k}{n})$, add $k$ new vertices $y_1,\dots,y_k$, and join each $y_i$ to all vertices in $X$ by edges of weight $\frac{2}{n}$.
\end{enumerate}
\end{lemma}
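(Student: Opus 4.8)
The plan is to verify directly that each of the three Held--Karp constraint families survives both operations: the degree equalities $\sum_{j\neq i}x_{\{ij\}}=2$, the box constraints $0\le x_{\{ij\}}\le 1$, and --- since all degrees are exactly $2$ --- the equivalent cut form \eqref{twoleave}, namely $\sum_{i\in S,\,j\notin S}x_{\{ij\}}\ge 2$ for every $\empt\neq S\subsetneq V'$, where $V'$ is the enlarged vertex set. The degree and box conditions are one-line arithmetic: in operation \ref{op.1edge} the new vertex $y$ has incident weight $1+n\cdot\tfrac1n=2$ and each old vertex has $2(1-\tfrac1n)+\tfrac2n=2$; in operation \ref{op.indset} each $y_i$ has $n\cdot\tfrac2n=2$ and each old vertex has $2(1-\tfrac kn)+k\cdot\tfrac2n=2$; all new edge weights lie in $[0,1]$ (for operation \ref{op.indset} using $\tfrac2n\le1$). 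So all the content is in the cut inequalities, where I would use that the original instance is feasible, hence its own cuts all have weight $\ge 2$ by \eqref{twoleave}.

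For operation \ref{op.1edge}, fix $\empt\neq S\subsetneq V'=X\cup\{y,z\}$, put $S'=S\cap X$, and let $c$ be the weight in the \emph{original} instance of the cut of $S'$, so $c\ge 2$ unless $S'\in\{\empt,X\}$. If $S\cap\{y,z\}=\empt$ the cut of $S$ is the downweighted old cut plus the $2|S'|$ new edges from $S'$ to $\{y,z\}$, of total weight $c(1-\tfrac1n)+\tfrac{2|S'|}{n}$; this is $\tfrac{2n}{n}=2$ when $S'=X$, and $\ge 2(1-\tfrac1n)+\tfrac2n=2$ when $\empt\neq S'\subsetneq X$. If exactly one of $y,z$, say $y$, lies in $S$, the edge $yz$ contributes $1$, the new edges from $y$ to $X\stm S'$ contribute $1-\tfrac{|S'|}{n}$ and those from $S'$ to $z$ contribute $\tfrac{|S'|}{n}$, so the cut is $\ge 1+1=2$. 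Finally, if $\{y,z\}\sbs S$ then $V'\stm S$ is a nonempty subset of $X$ and the cut of $S$ equals the cut of $V'\stm S$, already handled.

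For operation \ref{op.indset}, fix $\empt\neq S\subsetneq V'=X\cup\{y_1,\dots,y_k\}$, put $S'=S\cap X$, $s=|S'|$, $a=|S\cap\{y_1,\dots,y_k\}|$, and let $c$ be the original cut weight of $S'$. Since the $y_i$ are pairwise non-adjacent, the cut of $S$ has weight $W=2a+\tfrac{2s(k-2a)}{n}+c(1-\tfrac kn)$, the three terms counting new edges from $S\cap\{y_i\}$ to $X\stm S'$, new edges from $S'$ to $\{y_i\}\stm S$, and the downweighted old cut. If $S'=\empt$ then $S$ contains some $y_i$, so $a\ge1$ and $W=2a\ge2$; if $S'=X$ then $S\neq V'$ forces $a\le k-1$ and $W=2a+2(k-2a)=2(k-a)\ge2$. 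In the remaining case $\empt\neq S'\subsetneq X$ we have $c\ge2$, so it suffices to show $2a+\tfrac{2s(k-2a)}{n}-\tfrac{2k}{n}\ge0$, i.e.\ $na+s(k-2a)-k\ge0$ for $1\le s\le n-1$. This is linear in $s$: for $k\ge 2a$ it is minimized at $s=1$, giving $a(n-2)\ge0$, and for $k<2a$ it is minimized at $s=n-1$, giving $(k-a)(n-2)\ge0$, both using $n\ge2$ and $0\le a\le k$.

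The only step needing any thought --- the ``main obstacle,'' such as it is --- is this last computation for operation \ref{op.indset}: unlike operation \ref{op.1edge}, the coefficient $k-2a$ of $s$ in $W$ changes sign depending on whether fewer or more than half of the new vertices lie in $S$, so one must split into those two subcases and check the correct endpoint of the range $1\le s\le n-1$ in each. Everything else is routine bookkeeping.
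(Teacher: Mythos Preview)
Your proof is correct and takes essentially the same approach as the paper: verify degrees, then check the subtour constraints by case analysis on how $S$ meets the new vertices. The one difference is that you work with the cut form \eqref{twoleave} rather than the form $\cS(S)\le|S|-1$ the paper verifies; since you have already checked that all degrees equal $2$ these are equivalent, and your version makes the Operation~\ref{op.indset} bookkeeping a bit tidier than the paper's chain of inequalities. One cosmetic slip: the three displayed terms of $W$ do not individually match the edge sets you name --- the honest split is $\tfrac{2a(n-s)}{n}+\tfrac{2s(k-a)}{n}$, which then simplifies to your $2a+\tfrac{2s(k-2a)}{n}$ --- but the formula itself and all the subsequent casework are right.
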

\begin{proof}
It is immediate for both operations that the (weighted) degree of every vertex is 2; we need to check that
\begin{equation}\label{S}
\forall \empt\subsetneq S\subsetneq V\quad \cS(S):=\sum_{e\sbs S} x_e\leq |S|-1.
\end{equation}
For Operation \ref{op.1edge}, observe first that if $\empt\sbsn S\sbsn X$, we have $\cS(S)\leq (1-\tfrac 1 {|X|})(|S|-1)$ and $\cS(X)=|X|-1$.  For $\empt\sbsn S\sbsn X$, we have 
\[
\cS(S\cup \{y\}) \leq  (1-\tfrac 1 {|X|})(|S|-1)+|S|\tfrac 1 {|X|}=|S|+\tfrac{1}{|X|}-1\leq |S\cup \{y\}|-1.
\]
and 
\[
\cS(S\cup \{y,z\}) \leq  (1-\tfrac 1 {|X|})(|S|-1)+|S|\tfrac 2 {|X|}+1=|S|+\tfrac{|S|+1}{|X|}\leq |S\cup \{y,z\}|-1.
\]
And $\cS(X\cup \{y\})=\cS(X\cup \{z\})=|X|$. 

Finally, for Operation \ref{op.indset}, we have that $\cS(S)\leq (1-\tfrac k {|X|})(|S|-1)$ if $\empt\sbsn S\sbsn X$, and $\cS(X)=(1-\tfrac k n)(|X|)=|X|-k$.  If $Y\sbs \{y_1,y_2,\dots,y_k\}$ and $\empt \sbsn S\sbs X$ then
\begin{multline}
\cS(S\cup Y)\leq  (1-\tfrac k {|X|})(|S|-\mathbf{1}_{S\neq X})+|S|\cdot |Y|\cdot \tfrac{2}{|X|}\\=
|S|+\frac{(2|Y|-k)|S|}{|X|}-\mathbf{1}_{S\neq X}\left(1-\frac{k}{|X|}\right)\\\leq
|S|+\left(|Y|-\mathbf{1}_{|Y|<k}\right)\left(1-\frac{\mathbf{1}_{S\neq X}}{|X|}\right)-\mathbf{1}_{S\neq X}\left(1-\frac{k}{|X|}\right),
\end{multline}
and we see that $(Y\sbsn \{y_1,\dots,y_k\}\vee S\neq X)\implies \cS(S\cup Y)\leq |S|+|Y|-1$.
\end{proof}

To know that the lower bound $\lb=\hk$ performs well at leaves $v\in L$ such that $\bar \La_v\neq \empt$, we will also want to patch several smaller solutions to the Held-Karp LP into a single global solution using the same edges which tours in $\bar \La$ use to cross the $B_i$'s:
\begin{lemma}\label{hkpatch}
Suppose we are given feasible solutions to the Held-Karp LP on disjoint sets $X_1,X_2,\dots,X_s$ of cardinalities $n_1,n_2\dots,n_s$.   Write $X_i=\{x_i^1,\dots,x_i^{n_i}\}$, and suppose that the edge $\{x_i^1,x_i^2\}$ has weight 1 for all $2\leq i\leq s$ and that the edge $\{x_i^3,x_i^4\}$ has weight 1 for all $1\leq i\leq s-1$.  Then we can patch these solutions into a feasible solution on $\bigcup_{i=1}^s X_i$ by 
\begin{enumerate}[(a)]
\item  \label{delete} Deleting all the edges  $\{x_i^1,x_i^2\}$ $(2\leq i\leq s)$ and $\{x_i^3,x_i^4\}$ $(1\leq i\leq s-1)$
\item  Joining $x_i^3$ to $x_{i+1}^1$ and $x_i^4$ to $x_{i+1}^2$ with edges of weight 1, for all $1\leq i\leq s-1$
\end{enumerate}
\end{lemma}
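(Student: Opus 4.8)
The plan is to check directly that the patched assignment $x^{\ast}$ on $V=\bigcup_{i=1}^{s}X_{i}$ is a feasible point of the Held--Karp LP: that every weighted degree equals $2$, and that $\cS(S)\le |S|-1$ for every proper nonempty $S\sbsn V$. The degree condition is immediate --- every vertex other than the named points $x_i^1,x_i^2,x_i^3,x_i^4$ is untouched, and each of $x_i^1,x_i^2$ (for $i\ge 2$) and $x_i^3,x_i^4$ (for $i\le s-1$) loses exactly one incident weight-$1$ edge among the deletions (a) and gains exactly one incident weight-$1$ edge among the additions (b), so its degree stays $2$. Since all degrees are $2$, exactly as in the feasibility verification in the proof of Theorem~\ref{t.hk} (and in the proof of Lemma~\ref{hkextend}), the constraint $\cS(S)\le |S|-1$ is equivalent to the statement that the total $x^{\ast}$-weight of the edges leaving $S$ is at least $2$. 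So it remains to prove that every proper nonempty cut of $x^{\ast}$ has weight $\ge 2$.

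I would establish this by induction on $s$, with all the content in the base case $s=2$. For $s=2$, let $S\sbsn X_1\cup X_2$ be proper nonempty and write $S_i=S\cap X_i$. If $S$ --- or, since the edges leaving $S$ are exactly the edges leaving its complement $V\stm S$, the complement --- is contained in a single $X_i$, then either $S$ equals that $X_i$, and the cut is precisely the two patching edges, of total weight $2$; or $\empt\sbsn S\sbsn X_i$, in which case the cut is the set of edges of the original solution on $X_i$ that cross $S$ (of weight $\ge 2$ by feasibility of that solution), with the one deleted edge of $X_i$ removed, together with the patching edges that cross $S$ --- and whenever that deleted edge crosses $S$, exactly one patching edge does too, so the weight is again $\ge 2$. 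In the remaining case $\empt\sbsn S_i\sbsn X_i$ for both $i$, feasibility of the two original solutions gives that the edges inside $X_i$ crossing $S$ have weight $\ge 2$ for each $i$; deleting one edge from each $X_i$ removes weight at most $1$ from each, so the two blocks already contribute total cut weight $\ge 1+1=2$, and the patching edges only add to this.

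For the inductive step $s>2$, I would note that applying the lemma's operation just to $X_1$ and $X_2$ produces (by the $s=2$ case) a feasible Held--Karp solution on $X_1\cup X_2$ in which the edge $\{x_2^3,x_2^4\}$ still has weight $1$; hence the instance whose blocks are $X_1\cup X_2,\ X_3,\dots,X_s$ --- the combined block inheriting $x_2^3,x_2^4$ as its ``$3,4$'' interface points --- again satisfies the hypotheses of the lemma, and the full patching of $X_1,\dots,X_s$ is exactly this $s=2$ patching of $X_1,X_2$ followed by the patching of the $s-1$ blocks $X_1\cup X_2,X_3,\dots,X_s$. The induction hypothesis then completes the proof.

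The only real subtlety is the cut bookkeeping in the $s=2$ case --- specifically the boundary cases in which $S$ fails to split one of the two blocks, where the weight lost to a deleted edge must be recovered from a patching edge rather than from slack in the block's own cut. (A direct cut argument for general $s$ runs into the difficulty that a cut need not split each block cleanly, and the naive ``each block contributes $\ge 1$'' estimate is too weak once several blocks are split, forcing a charging argument along the spine of interface points; channelling everything through the $s=2$ case sidesteps this entirely.)
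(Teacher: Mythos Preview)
Your proof is correct and follows essentially the same strategy as the paper: both reduce by induction on $s$ to the case $s=2$, verify the degree condition, and then check the subtour constraint by a short case analysis. The only cosmetic difference is that you verify the equivalent cut condition (weight leaving $S$ is $\geq 2$) and organize cases by whether $S$ splits each block, whereas the paper works directly with $\cS(S)\le |S|-1$ and organizes cases by the number $\rho$ of patching edges contained in $S$; these amount to the same bookkeeping.
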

\begin{proof}
By induction on $s$, it suffices to handle the case $s=2$.  (For $s>2$, first patch together the $X_1,\dots,X_{s-1}$, to obtain a Held-Karp solution containing $\{x_1^1,x_1^2\}$ and $\{x_{s-1}^3,x_{s-1}^4\}$ with weight 1, then patch this with $X_s$.)

It is apparent that after the patching operation, the degree of every vertex is still 2.  Suppose now that $\empt\sbsn S\sbsn X_1\cup X_2$ and write $S_i=S\cap X_i$ for $i=1,2$. Let $0\leq \r\leq 2$ denote the number of edges from among $\{x_{1}^3,x_2^1\}$, $\{x_{1}^4,x_2^2\}$ whose endpoints are both in the set $S$.  Note that 
\[
\cS(S)=\cS(S_1)+\cS(S_2)+\r.
\]
We let $\bar \cS(S_i)$ denote the weight of the HK instance on $S_i$ before the patching operation (of course, $\cS(S_i)\leq \bar \cS(S_i)$) and consider cases:\\
\textbf{Case 1:} $\r=0$\\
We have $S_i\neq X_i$ for some  $i\in\{1,2\}$.  For at least one $i$, $S_i\neq \empt$, giving
\[
\cS(S)\leq \bar\cS(S_1)+\bar\cS(S_2)\leq |S_1|+|S_2|-1=|S|-1.
\]
\textbf{Case 2:} $\r=1$\\ 
Now $\r<2$ implies that $S_1\neq X_1$ or $S_2\neq X_2$, while $\r>0$ implies that $S_1\neq \empt$ and $S_2\neq \empt$.  If in fact both $S_1\neq X_1$ and $S_2\neq X_2$, then we have 
\[
\cS(S)\leq \bar\cS(S_1)+\bar\cS(S_2)+1\leq |S_1|-1+|S_2|-1+1=|S|-1,
\]
while if (without loss of generality) $S_1=X_1$ and $S_2\neq X_2$, then the deletion step (\ref{delete}) implies that $\cS(S_1)=\bar \cS(S_1)-1,$ and so we have
\[
\cS(S)\leq \bar\cS(S_1)-1+\bar\cS(S_2)+1\leq |S_1|-1+|S_2|-1+1=|S|-1.
\]
\textbf{Case 3:} $\r=2$\\
In this case, we have that the deletion step (\ref{delete}) implies that $\cS(S_i)\leq \bar \cS(S_i)-1$.  In particular, we have 
\[
\cS(S)\leq \bar \cS(S_1)-1+\bar \cS(S_2)-1+2=\bar \cS(S_1)+\bar \cS(S_2)
\]
and thus $\cS(S)\leq |S|-1$ if $\empt\neq S\neq X_1\cup X_2$, since we must have either $\empt\neq S_1\neq X_1$ or $\empt\neq S_2\neq X_2$ ($S_i=\empt$ is not possible since $\r>0$).
\end{proof}

Next we prove a concentration lemma, which is a simple modification of what appears in \cite{S1}.  This will allow us to argue that modest conditioning does not significantly alter the value of $\lb$ in the leaves of interest.  

Let $J\sbs \{1,\dots,n\}$ be any fixed set of indices, and let $X_J\sbs \xdn=\{x_1,x_2,\dots,x_n\}$ denote the random set $\{x_j\mid j\in J\}$.  Recalling $\cI$ from \eqref{e.I}, we let 
\[
\lb_i(J)=\lb\left((B_i\cap X_J)\stm \cI\right).
\]
In particular, $\lb_i(J)$ is the value of $\lb(X_J)$ restricted to the box $B_i$, after throwing away the (two or four) special points $x_i^j$.
\begin{lemma}\label{lemconc}
For $\wLB(J)=\sum_i{\lb_i}(J)$, where $\lb=\tf_g$ or $\hk$, and $|J|=\eta,$ we have
\begin{enumerate}[(a)]
\item $\E \wLB(J)\leq \b_{\lb}\eta^{(d-1)/d}+o(\eta^{(d-1)/d}).$
\item There is an absolute constant $c>0$ such that, 
$$\Pr(\wLB(J)\geq \E \wLB(J)+t)\leq \exp\set{-\frac{ct^2}{\eta^{(d-2)/d}\log^{2/d}\eta}}.$$
\end{enumerate}
\end{lemma}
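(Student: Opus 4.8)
Part (a) of the lemma is an upper bound on an expectation, obtained by splitting $X_J$ along the boxes $B_i$ and applying the Beardwood--Halton--Hammersley/Steele asymptotics at the scale of a single box; part (b) is a bounded-differences concentration estimate in which the Lipschitz constant is essentially a box diameter.

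For (a), I would condition on the numbers $M_i=|B_i\cap X_J|$ of points falling in each box. Recall that each $B_i$ has side $a=(K\log n/n)^{1/d}$, that there are $s=n/(K\log n)$ boxes, and that $\E M_i=\eta\,\mathrm{vol}(B_i)=\eta/s$. Given $M_i=m$, the relevant points are $m$ i.i.d.\ uniform points of $B_i$ (and deleting the at most four interface points of $\cI$ from a box alters $\lb_i(J)$ by only $O(a)$, a correction summing over boxes to $O(as)=o(\eta^{(d-1)/d})$). Rescaling $B_i$ to the unit cube and using Steele's theorem in the form $\E[\lb(\text{$m$ i.i.d.\ points})]=\b_\lb m^{(d-1)/d}(1+o_m(1))$, together with the worst-case bound $\lb(Y)\le C_d|Y|^{(d-1)/d}$ of Few and Toth to absorb boxes with few points, one gets $\E[\lb_i(J)\mid M_i=m]\le a\b_\lb m^{(d-1)/d}(1+o_m(1))$. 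Summing over $i$, using concavity of $x\mapsto x^{(d-1)/d}$ to bound $\sum_i\E[M_i^{(d-1)/d}]\le s(\eta/s)^{(d-1)/d}$ and the identity $s\,a\,(\eta/s)^{(d-1)/d}=\eta^{(d-1)/d}$, yields the claimed bound whenever the typical box count $\eta/s$ tends to infinity --- which is exactly the range needed, since in the branch-and-bound application $\eta=(1-o(1))n$, so $\eta/s=(1-o(1))K\log n$.

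For (b), I would regard $\wLB(J)$ as a function of the i.i.d.\ point locations. Moving a single point affects only the (at most two) boxes containing its old and new positions, and in one box the deletion or insertion of a point changes $\tf_g$ by at most a constant times $\mathrm{diam}(B_i)=\sqrt d\,a$: shortcut the affected cycle around a deleted point, or reroute a nearest cycle through an inserted one, repairing any resulting girth violation (by merging a short cycle into a neighbour, or discarding it if too few points remain) at extra cost $O(a)$. Likewise it changes $\hk$ by at most $O(a)$: insertion is an instance of Operation~\ref{op.indset} of Lemma~\ref{hkextend} with $k=1$ (downweight the box's solution by $1-\tfrac1m$ and attach the new point to the other $m$ points with weight $\tfrac2m$, adding total weight $\le 2\,\mathrm{diam}(B_i)$), and deletion is symmetric. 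Reassigning the interface role of a bumped minimum-index point changes the set on which some $\lb_i$ acts by one further point, again costing $O(a)$. Thus $\wLB(J)$ meets the hypothesis of \eqref{mcd} with $c=O((\log n/n)^{1/d})$, so \eqref{mcd} gives $\Pr(\wLB(J)\ge\E\wLB(J)+t)\le 2\exp(-\Omega(t^2/(\eta c^2)))$; since $\eta=n^{1-o(1)}$ in the relevant range, $\eta c^2=O(\eta^{(d-2)/d}\log^{2/d}\eta)$, which is (b).

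The step I expect to be the main obstacle is the uniformity of the error terms. In (a) one must be sure that the contribution of atypical boxes --- those with substantially more or fewer than $\eta/s$ points --- is genuinely $o(\eta^{(d-1)/d})$ rather than merely $O(\eta^{(d-1)/d})$; this is what the Few--Toth worst-case bound plus Chernoff-type concentration of the $M_i$ buy, and it is also why the statement should be read in the regime $\eta/s\to\infty$ (for $\eta$ of order $n/\log n$ the per-box count is bounded and $\E[\lb(\text{$m$ points})]/m^{(d-1)/d}$ need not lie below $\b_\lb$). In (b) the delicate point is that the single-point Lipschitz bound is $O(\mathrm{diam}(B_i))$ \emph{uniformly}, independent of how many points the box contains --- for $\hk$ this is precisely the content of Lemma~\ref{hkextend}(\ref{op.indset}), and for $\tf_g$ it requires checking that re-establishing the girth constraint after a deletion costs only $O(\mathrm{diam})$. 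Neither point is deep, but both need care.
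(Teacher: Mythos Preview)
Your proposal is correct and follows essentially the same route as the paper: part (a) is Jensen's inequality applied to the per-box counts after rescaling (the paper writes this in two lines and is, if anything, less careful than you about the uniformity of the $o(1)$), and part (b) is Azuma--Hoeffding/McDiarmid with a Lipschitz constant of order the box diameter, obtained for $\tf_g$ exactly as you describe (shortcut on deletion, merge cycles to repair girth, reroute on insertion). The one difference is the $\hk$ Lipschitz bound: the paper does not use Lemma~\ref{hkextend} here but simply invokes Proposition~3 and Lemma~11 of Goemans--Bertsimas \cite{GB}, which give $d_i\le 2\min_{j\neq i}|\hat x_i-x_j|$ directly; your Operation~\ref{op.indset} argument handles insertion cleanly, but ``deletion is symmetric'' is not quite sufficient (a single coordinate change requires both a deletion in one box and an insertion in another), so you would still need a separate deletion bound for $\hk$, which is precisely what the Goemans--Bertsimas citation supplies.
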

\begin{proof}\ \\
(a) Suppose that $X_J\cap B_i$ contains $n_i$ points for $i=1,2,\ldots,s$. Then, in particular, $X_J\cap B_i\stm \cI$ is $n_i-2$ or $n_i-4$ for each $i$, and we have
$$\E \wLB(J)\leq (1+o(1))\sum_{j=1}^s\frac{\b_{\lb}n_j^{(d-1)/d}}{s^{1/d}}\leq (1+o(1))\frac{\b_{\lb}}{s^{1/d}}\times s\bfrac{\sum_{j=1}^sn_j}{s}^{(d-1)/d},$$
where we have used Jensen's inequality. 

(b) Assume first that $\lb=\tf_g$. Let $J=\{j_1,j_2,\dots,j_\eta\}$.  
Let
$$d_i=\max_{j_i,\hat j_i}|\wLB(\{j_1,j_2,\ldots,j_i,\ldots,j_\eta)-\wLB(\{j_1,j_2,\ldots,\hat{j}_i,\ldots,j_\eta\})|.$$
The Azuma-Hoeffding inequality implies that
\beq{AzH}
\Pr(\wLB\geq \E \wLB+t)\leq \exp\set{-\frac{t^2}{2\sum_{i=1}^n|d_i|^2}}.
\eeq

Now fix $j_1,j_2,\ldots,j_\eta,\hat j_i$ and suppose that $j_i\in B_k$ and $\hat j_i\in B_l$. Then let
$$\D=|\wLB(j_1,j_2,\ldots,j_i,\ldots,j_\eta)-\wLB(j_1,j_2,\ldots,\hat{j}_i,\ldots,j_\eta)|.$$
Let $F=F_1\cup\cdots\cup F_s$ where $F_i$ is the optimal 2-factor for $(X_J\cap B_j)\stm \cI$, for $i=1,2,\ldots,s$. Suppose that the neighbors of $x=x_{j_i}$ on its cycle $C$ in $F$ are $y,z$. If $|C|=g$ then we cannot simply delete $x$ and replace the path $(y,x,z)$ by $(y,z)$ as this will produce a 2-factor of girth $g-1$. So, let $a$ be the closest point in $B_k$ to $x$ that is not on $C$ and let $b$ be a neighbor of $a$ on the cycle $C'$ of $F$ that contains $a$. The first thing we do now is to delete $x$ and merge the points in $C\cup C'\setminus\set{x}$ into one cycle. We delete the edges $\set{x,y},\set{x,z},\set{a,b}$ and add the edges $\set{y,a},\set{z,b}$. The change in cost is at most $2d^{1/2}\left(\frac {K \log n}{n}\right)^{\frac 1 d}$.
The new cycle has length at least $2g-1\geq g$. After this we can insert $\hx=\hx_i$ into the cycle $D$, say, of $F_l$ that contains the point $c$ of $x_1,\ldots,x_{i-1},x_{i+1},\ldots,x_n$ closest to $\hx$. 
Thus,
\beq{change}
d_i\leq 4d^{1/2}\left(\frac {K \log n}{n}\right)^{\frac 1 d}.
\eeq
This proves the lemma for $\lb=\tf_g$.

Assume now that $\lb=\hk$. Here we can use the results of Goemans and Bertsimas \cite{GB}. 
Proposition 3 and Lemma 11 of the same paper shows that $d_i\leq 2\min_{j\neq i}|\hx_i-x_j|$ and the proof goes through as before.
\end{proof}

\bigskip
We are ready to proceed with the proof of Theorem \ref{bbth}.  We consider the sizes of $\bar \La$ and $\bar \La_v=\bar \La\,\cap \,\La_v$.  We let $\b_j=|\xdn \cap B_j|$.  Then we have that 
\[
\bar \La=(\b_1-2)!\left(\prod_{j=2}^{s-1}(\b_j-3)!\right)(\b_s-2)!.
\]
Now, given $I_v$, we let $I_v^j\sbs I_v$ denote those edges in $I_v$ whose endpoints both lie in $B_j$, and $I_v'\sbs I_v$ denote those edges in $I_v$ of the form $\{x_j^3,x_{j+1}^1\}$ or $\{x_j^2,x_{j+1}^4\}$.  Given $O_v$, we let $O_v^j$ denote the set of those edges in $O_v$ whose endpoints both lie in $B_j$ and are not equal to the edges $\{x_j^1,x_j^2\},\{x_j^3,x_j^4\},\{x_j^1,x_j^4\},$ or $\{x_j^2,x_j^3\}$. 
Observe now that $\bar \La=\empt$ unless $I_v=I_v'\cup \bigcup_{j=1}^s I_v^j$ and $O_v=\bigcup_{j=1}^s O_v^j$.
We now observe that 
\begin{equation}\label{omiv}
\bar \La_v\leq (\b_1-2-|I_v^1|)!2^{|I_v^1|}\left(\prod_{j=2}^{s-1}(\b_j-3-|I_v^j|)!2^{|I_v^j|}\right)(\b_s-2-|I_v^s|)2^{|I_v^s|}!
\end{equation}
and, letting $\d_A=1$ when $|A|\geq 1$, and 0 otherwise:
\begin{equation}\label{omov}
\bar \La_v\leq (\b_1-2-\d_{O_v^1})(\b_1-3)!\left(\prod_{j=2}^{s-1}(\b_j-3-\d_{O_v^j})(\b_j-4)!\right)(\b_s-2-\d_{O_v^s})(\b_s-3)!
\end{equation}
since, e.g., the number of ways of covering $K_{[\b]}$ with paths from $1$ to $3$ and $2$ to $4$, respectively, while avoiding an edge $e$ which is not $\{1,2\},\{3,4\},\{1,4\},$ or $\{2,3\}$ is exactly either $(\b-3)!-(\b-4)!$ or $(\b-3)!-2(b-4)!$, depending, respectively on whether or not $e$ is incident with a vertex in $\{1,2,3,4\}$.

Observe now that the Chernoff bounds give that w.h.p all $\b_j$'s satisfy 
$\b_j<2 K\log n.$  In particular, there must be at least $|O_v|\left(\frac{1}{2K\log n}\right)^2$ $j$'s such $|O_v^j|\geq 1$, so that \eqref{omov} gives
\begin{equation}\label{ovest}
\bar \La_v\leq \bar \La\cdot\left(1-\frac {1}{2K\log n}\right)^{|\bar O_v|\left(\frac{1}{2K\log n}\right)^2}\leq e^{-|O_v|/(2K\log n)^3}
\end{equation}
where $\bar O_v=\bigcup_{j=1}^s O_v^j$.  Also, for $\bar I_v=\bigcup_{j=1}^s I_v^j$, \eqref{omiv} gives (very crudely) that, say,
\begin{equation}\label{ivest}
\bar \La_v\leq \bar \La\cdot (\tfrac 1 3)^{-|I_v|}\leq \bar \La\cdot e^{-|I_v|}
\end{equation}

Now \eqref{ovest} and \eqref{ivest} establish that large $\bar I_v$ or $\bar O_v$ forces $\bar \La_v$ to be small. (Note that this part of the argument would have failed if we were working with $\La_v$'s in place of $\bar \La_v$'s.) Thus, defining $\bar L=\{v\in L \mid \bar \La_v\neq \empt\}$, we have that $\bar \La=\bigcup_{v\in \bar L} \bar \La_v$.  In particular, since $\bar \La$ is large, we can show that the set of leaves $L$ of the branch and bound tree must be large (in fact, that $\bar L\sbs L$ is large) by showing that $v\in \bar L$ implies that either $\bar I_v$ or $\bar O_v$ is large.  This is where we use separation of constants.  Indeed, we will prove:
\begin{lemma}\label{lbbound}
Let $\lb$ be $\tf_g$ or $\hk$.  We have w.h.p. that for all $v\in \bar L$, either $|\bar I_v|+|\bar O_v|>\tfrac n{\log^3 n}$, or else that
\[
\lb(\xdn|I_v,O_v)\leq \b_{\lb}n^{\frac{d-1}{d}}+C(|\bar I_v|+|\bar O_v|)\left(\tfrac{\log n}{n}\right)^{\tfrac 1 d}+o(n^{\frac {d-1}{d}})
\]
for some constant $C$.
\end{lemma}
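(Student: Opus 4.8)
The plan is to exhibit, for each relevant leaf $v\in\bar L$, an explicit feasible solution to the constrained problem defining $\lb(\xdn\mid I_v,O_v)$ whose weight meets the claimed bound; since the optimum is at most the weight of any feasible solution, this suffices. First I would isolate the randomness into two $v$-independent events. Let $\mathcal E_1$ be the event that every box population $\b_j=|\xdn\cap B_j|$ lies between $\tfrac12 K\log n$ and $2K\log n$, and $\mathcal E_2$ the event that $\sum_j \lb_j\le \b_\lb n^{\frac{d-1}{d}}+o(n^{\frac{d-1}{d}})$, where $\lb_j:=\lb\big((\xdn\cap B_j)\stm\cI\big)$ is the \emph{unconstrained} optimum on box $B_j$ after discarding the (at most four) interface points. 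Then $\mathcal E_1$ holds w.h.p.\ by Chernoff bounds and a union bound over the $s$ boxes, and $\mathcal E_2$ holds w.h.p.\ by Lemma~\ref{lemconc} applied with $J=\{1,\dots,n\}$ and, say, $t=n^{\frac{d-1}{d}}/\log n$, for which the tail in part (b) is $\exp(-\Omega(n/\log^{2+2/d}n))$. Since neither event depends on $v$, once we condition on $\mathcal E_1\cap\mathcal E_2$ the rest of the argument is deterministic and no union bound over leaves is needed.

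Now fix $v\in\bar L$; we may assume $|\bar I_v|+|\bar O_v|\le n/\log^3 n$, as otherwise the first alternative of the lemma holds. Recall that for $v\in\bar L$ we have $I_v=I_v'\cup\bigcup_j I_v^j$ and $O_v=\bigcup_j O_v^j$ with $|I_v'|\le 2s$. I would construct a feasible solution $\Phi$ to the $\lb$-problem with $I_v\subseteq\Phi$ (resp.\ $x_e=1$ for $e\in I_v$) and $\Phi\cap O_v=\empt$ (resp.\ $x_e=0$ for $e\in O_v$) in four steps. \emph{(i)} In each box start from an optimal unconstrained solution $\Phi_j^0$ on $(\xdn\cap B_j)\stm\cI$ of weight $\lb_j$; by $\mathcal E_2$ these total at most $\b_\lb n^{\frac{d-1}{d}}+o(n^{\frac{d-1}{d}})$. \emph{(ii)} Enforce the within-box constraints $I_v^j$ and $O_v^j$ by $O(|I_v^j|+|O_v^j|)$ local modifications, each touching only $O(1)$ edges inside $B_j$ and hence adding $O(\diam(B_j))=O((\log n/n)^{1/d})$ to the weight --- for $\tf_g$ these are edge swaps chosen to merge cycles rather than split them (and, when a split is unavoidable, immediately re-merging the short piece with a nearby cycle at $O(\diam(B_j))$ further cost), and for $\hk$ these are local fractional reroutings preserving subtour-elimination feasibility in the spirit of Lemma~\ref{hkextend} and the sensitivity estimates of \cite{GB} used in Lemma~\ref{lemconc}. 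Under $|\bar I_v|+|\bar O_v|\le n/\log^3 n$ this costs $O\big((|\bar I_v|+|\bar O_v|)(\log n/n)^{1/d}\big)=o(n^{\frac{d-1}{d}})$. \emph{(iii)} Reinsert the four interface points in each box --- by splicing them into edges for $\tf_g$, or by adjoining $\{x_j^1,x_j^2\}$ and $\{x_j^3,x_j^4\}$ as weight-one pairs via operation~\ref{op.1edge} of Lemma~\ref{hkextend} for $\hk$ --- at cost $O(\diam(B_j))$ per box, hence $O(s(\log n/n)^{1/d})=o(n^{\frac{d-1}{d}})$ overall. \emph{(iv)} Assemble globally and install the forced interface edges $I_v'$: for $\tf_g$, the disjoint union of per-box $2$-factors already has girth $\ge g$, and each edge of $I_v'$ is added by a cycle-merging swap of cost $O((\log n/n)^{1/d})$, totalling $o(n^{\frac{d-1}{d}})$; for $\hk$, I apply Lemma~\ref{hkpatch} to patch the per-box solutions into a single feasible instance whose new weight-one inter-box edges are exactly the admissible interface edges, so in particular they include all of $I_v'$ and avoid every edge of $O_v=\bigcup_j O_v^j$ (which contains no inter-box edge).

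Summing the four contributions yields $w(\Phi)\le \b_\lb n^{\frac{d-1}{d}}+C(|\bar I_v|+|\bar O_v|)(\log n/n)^{1/d}+o(n^{\frac{d-1}{d}})$, and since $\Phi$ is feasible for the constrained problem, $\lb(\xdn\mid I_v,O_v)\le w(\Phi)$, which is the claimed bound.

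The main obstacle is step \emph{(ii)}: maintaining feasibility --- the girth lower bound for $\tf_g$, and LP-feasibility for $\hk$ --- while forcing or forbidding a prescribed edge using only $O(\diam(B_j))$ extra weight, and doing so for all constrained edges in a box simultaneously without one modification undoing another. For $\tf_g$ this is careful but routine cycle bookkeeping during edge swaps; for $\hk$ it requires the local fractional surgery on subtour-elimination solutions afforded by (generalizations of) Lemma~\ref{hkextend} together with the edge-sensitivity estimates of \cite{GB}.
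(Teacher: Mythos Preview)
Your overall architecture is right, but your step \emph{(ii)} is where the argument and the paper's diverge, and where your version has a real gap.

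The paper does not try to force or forbid edges inside an existing per-box solution. Instead it \emph{removes} the vertices touched by $\bar I_v$ and $\bar O_v$, applies Lemma~\ref{lemconc} to the reduced point set, and then reinserts those vertices using the operations of Lemma~\ref{hkextend}. Because the removed set depends on $v$, this requires a union bound over all subsets of size at most $n/\log^3 n$ (the displayed inequality \eqref{bugg} in the paper); the concentration in Lemma~\ref{lemconc}(b) is strong enough to beat $\binom{n}{n/\log^3 n}$, and this is precisely why the threshold $n/\log^3 n$ appears in the statement. Your approach, by contrast, uses concentration only for the full set $J=\{1,\dots,n\}$ and then attempts to modify the resulting per-box optima in place.

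The gap is that, for $\hk$, neither Lemma~\ref{hkextend} nor the Goemans--Bertsimas sensitivity estimates you cite do what you need in step \emph{(ii)}. Both operations of Lemma~\ref{hkextend} adjoin \emph{new} vertices to an existing feasible solution; they say nothing about how to set $x_e=1$ (or $x_e=0$) for an edge $e$ between two vertices already present, while preserving subtour feasibility and increasing cost by only $O(\diam(B_j))$. Likewise the estimate from \cite{GB} used in Lemma~\ref{lemconc} bounds the effect of inserting or deleting a \emph{vertex}, not of pinning an edge variable. This is exactly why the paper strips the relevant vertices out first: once they are absent, each $\bar I_v$-edge can be patched in as a fresh weight-$1$ pair via operation~\eqref{op.1edge}, and the $\bar O_v$-endpoints as fresh isolated vertices via operation~\eqref{op.indset}, so the required constraints are satisfied by construction. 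Even for $\tf_g$, your description of step \emph{(ii)} is too casual: when several edges in $O_v^j$ must be avoided, each swap must produce replacement edges that are themselves outside $O_v^j$ and must not undo earlier swaps, and you have not argued this can always be arranged at $O(\diam(B_j))$ per constraint. Removing the endpoints first, as the paper does, sidesteps both issues.
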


To use the lemma to complete the proof of Theorem \ref{bbth}, we observe that $v\in \bar L$ implies that $\lb(\xdn)\geq \tsp(\xdn)$ and thus, from Lemma \ref{lbbound}, that either $|\bar I_v|+|\bar O_v|>\tfrac {n}{\log^3 n}$, or else that
\[
|\bar I_v|+|\bar O_v|\geq (\btsp-\b_\lb)\tfrac{n}{\log n}-o(\tfrac{n}{\log n})
\]
In particular, in the latter case, the separation $\btsp>\b_{\lb}$ gives that
\[
|\bar I_v|+|\bar O_v|=\Omega(\tfrac{n}{\log n})
\]
In either case, \eqref{ovest} and \eqref{ivest} will then give that 
\[
\bar\La_v\leq \bar \La e^{-\Omega(n/\log^6 n)}
\]
and thus that 
\[
\bar L\geq e^{\Omega(n/\log^6 n)},
\]
completing the proof of Theorem \ref{bbth}.\qed

\begin{proof}[Proof of Lemma \ref{lbbound}]
We have that $v\in \bar L$ implies that $I_v=\bar I_v\cup I_v'$ and $O_v=\bar O_v$.  
Now let 
\[
\cY_n^v=\cX_n\setminus (V(\bar I_v)\stm V(\bar O_v)).
\]
 Letting $J(X)$ for $X\sbs \xdn$ denote the set such that $X=\{x_j\mid j\in J(X)\}$, Lemma \ref{lemconc} gives that
\begin{multline}\label{bugg}
\Pr\left(\exists S\subseteq \cX_n,|S|\leq \frac{n}{\log^3n}:\wLB(J(\cX_n\setminus S))\geq (\b_{\lb}+\e)n^{(d-1)/d}\right)\\
\leq \binom{n}{n/\log^3n}\exp\set{-\frac{c\e^2n^{2(d-1)/d}}{4n^{(d-2)/d}\log^{2/d}}}\leq 
\exp\set{\frac{n}{\log^2n}-\frac{c\e^2n}{4\log^{2/d}n}}=o(1).
\end{multline}
Thus, taking, e.g., $\e=\tfrac{1}{\sqrt{\log n}}$ we have that w.h.p all leaves $v\in \bar L$ satisfy 
\[
\wLB(J(\cY_n^v))<\b_\lb n^{(d-1)/d}+o(n^{(d-1)/d}).
\]
Recall that an instance of $\wLB$ consists of independent instances $H_j$ of $\lb$ in each $B_j\cap \ydn\stm \cI$.  Now, for each $j$, we patch into $H_j$ the edges in $\bar I_v\cap B_j$ (using Operation \ref{op.1edge} of Lemma \ref{hkextend}, if $\lb=\hk$), the endpoints of the edges in $\bar O_v\cap B_j$ (using Operation \ref{op.indset} of Lemma \ref{hkextend}, if $\lb=\hk$), and the (one or two, depending in $j$) edges 
\begin{align*}
\{x_j^1,x_j^2\}&\mbox{ for }2\leq j\leq s\\
\{x_j^3,x_j^4\}&\mbox{ for }1\leq j\leq s-1
\end{align*}
(again using Operation \ref{op.1edge}, if $\lb=\hk$).  In the case where $\lb=\tf_g$ the patching is simply accomplished by rerouting cycles through the edges and points.  Note that, as the squares $B_j$ have diameter $\sqrt d \left(\frac{K\log n}{n}\right)^{1/d},$ the total increase in cost due to this patching is $\leq C_d(|\bar I_v|+|\bar O_v|)\left(\frac{K\log n}{n}\right)^{1/d}$ for some constant $C_d$.

Next, we patch the resulting solutions together at the points $x_i^j$ to a global instance of the $\lb$ (using Lemma \ref{hkpatch}, if $\lb=\hk$).  It is important to note that by beginning with an instance of $\wLB$ and then patching to this particular instance of $\lb$, we are guaranteed this instance includes any edges in $I_v'$.  Since there are $O(\frac{n}{\log n})$ squares of diameter $O(\sqrt d(\frac{\log n}{n})^{\tfrac 1 d})$, the total increased cost from this patching is $o(n^{(d-1)/d})$, and so we have shown that
\begin{multline}
\lb(\xdn|\bar I_v\cup I_v',O_v)<\wLB(\cY_n^v)+C_d(|\bar I_v|+|\bar O_v|)\left(\tfrac{\log n}{n}\right)^{1/d}+o(n^{(d-1)/d})\\<
\b_{\lb}n^{(d-1)/d}+C_d(|\bar I_v|+|\bar O_v|)\left(\tfrac{\log n}{n}\right)^{1/d}+o(n^{(d-1)/d}),
\end{multline}
as desired.
\end{proof}

\section{Final Remarks}
Our results lead to many natural directions of inquiry, and here we mention just a few.  Apart from simply increasing the list of separated pairs of constants, the following seems like a very good challenge:
\begin{q}
What is the relationship between $\bmst^d$, $\btf^d$, $2\bmm^d$?
\end{q}

In connection with Theorem \ref{t.2flimit}:
\begin{q}
The minimum length of covering of $\xdn$ by paths of lengths at least $k$ is a Euclidean functional; let $\b^d_{P,k}$ denote the constant in its asymptotic formula.  Is it true that $\lim\limits_{k\to \infty} \b^d_{P,k}=\btsp^d$?
\end{q}

Short of a full confirmation of Conjecture \ref{ccc}, one could warm up with some special cases:
\begin{q}
Pick an integer $k$, and then prove or disprove that distinct unlabeled trees $T$ on $k$ vertices have distinct asymptotic constants $\b_T^d$.
\end{q}

Finally, we note that as our methods for separating constants give only very small differences, we have not attempted to calculate lower bounds on, say, $\btsp^d-\bhk^d$, or optimize our techniques for this purpose, though this project could be pursued.

\end{document}